\numberwithin{equation}{section}
\theoremstyle{plain}
\newtheorem{theorem}{Theorem}[section]
\newtheorem{corollary}[theorem]{Corollary}
\newtheorem{lemma}[theorem]{Lemma}
\newtheorem{proposition}[theorem]{Proposition}
\theoremstyle{definition}
\newtheorem{definition}[theorem]{Definition}
\theoremstyle{remark}
\newtheorem{remark}[theorem]{Remark}
\renewcommand{\P}{\mathbb{P}}
\newcommand{\C}{\mathbb{C}}
\newcommand{\Q}{\mathbb{Q}}
\newcommand{\R}{\mathbb{R}}
\newcommand{\N}{\mathbb{N}}
\newcommand{\Z}{\mathbb{Z}}
\renewcommand{\H}{\mathbb{H}}
\renewcommand{\O}{\mathcal{O}}
\newcommand{\I}{\mathcal{I}}
\renewcommand{\a}{\mathfrak{a}}
\renewcommand{\b}{\mathfrak{b}}
\renewcommand{\d}{\mathfrak{d}}
\newcommand{\ep}{\varepsilon}
\newcommand{\ph}{\varphi}
\newcommand{\set}[1]{\left\{ #1 \right\}}
\newcommand{\setpm}{\set{\pm 1}}
\newcommand{\setm}[1]{\setminus\set{ #1 }}
\newcommand{\br}[1]{\left( #1 \right)}
\newcommand{\sq}[1]{\left[ #1 \right]}
\newcommand{\abs}[1]{\left|#1\right|}
\newcommand{\bs}{\backslash}
\newcommand{\se}{\subset}
\newcommand{\defC}{:\;}
\newcommand{\OK}{\O_K}
\newcommand{\IK}{\mathcal{I}_K}
\newcommand{\SLK}{\SL_2(K)}
\newcommand{\PSLK}{\PSL_2(K)}
\newcommand{\GLR}{\GL_2(\R)}
\newcommand{\GLRp}{\GL_2^+(\R)}
\newcommand{\SLp}[1]{\SL(\OK \oplus #1)}
\newcommand{\SLa}{\SLp{\a}}
\newcommand{\SLb}{\SLp{\b}}
\newcommand{\SLM}[2]{M(#1,#2)}
\newcommand{\Ga}{\Gamma_\a}
\newcommand{\Gb}{\Gamma_\b}
\newcommand{\Gas}[1]{\Gamma_{\a,#1}}
\newcommand{\Ginf}{\Gas{\infty}}
\newcommand{\Ginfb}{\overline{\Ginf}}
\newcommand{\Xa}{X(\a)}
\newcommand{\XB}[1]{X(#1)^*}
\newcommand{\XaB}{\XB{\a}}
\newcommand{\XbB}{\XB{\b}}
\newcommand{\XaH}{\overline{X(\a)}}
\renewcommand{\matrix}[4]{\begin{pmatrix}#1 & #2\\#3 & #4 \end{pmatrix}}
\newcommand{\sMatrix}[4]{\br{\begin{smallmatrix}#1 & #2\\#3 & #4 \end{smallmatrix}}}
\newcommand{\abcd}{\matrix{a}{b}{c}{d}}
\newcommand{\matrixCol}[2]{\begin{pmatrix}#1 \\ #2\end{pmatrix}}
\newcommand{\del}{\partial}	
\newcommand{\zb}{\overline{z}}
\newcommand{\ub}{\overline{u}}
\newcommand{\vb}{\overline{v}}
\newcommand{\hF}{{}_2F_1}
\DeclareMathOperator{\tr}{tr}
\DeclareMathOperator{\vol}{vol}
\DeclareMathOperator{\SL}{SL}
\DeclareMathOperator{\PSL}{PSL}
\DeclareMathOperator{\GL}{GL}
\DeclareMathOperator{\OGroup}{O}
\DeclareMathOperator{\UGroup}{U}
\DeclareMathOperator{\sgn}{sgn}
\DeclareMathOperator{\Beta}{B}
\let\div\relax
\DeclareMathOperator{\div}{div}
\newcommand{\equiArrow}{\quad\Leftrightarrow\quad}
\newcommand{\equiDefArrow}{\quad:\Leftrightarrow\quad}
\newcommand*{\isoArrow}{\mathrel{\vcenter{\offinterlineskip\hbox{\begin{small}$\mkern6mu\sim$\end{small}}\hbox{$\longrightarrow$}}}}
\newcommand{\und}{\quad\text{and}\quad}  \newcommand{\with}{\quad\text{with}\quad}
\newcommand{\ssum}{\sideset{}{'}\sum}
\begin{document}

\title[Automorphic Green functions on Hilbert modular surfaces]{Automorphic Green functions\\on Hilbert modular surfaces}

\author{Johannes J. Buck}

\address{Fachbereich Mathematik, Technische Universit\"at Darmstadt, Schlossgartenstrasse 7,
D--64289 Darmstadt, Germany}
\email{jbuck@mathematik.tu-darmstadt.de}

\thanks{The author was supported by the DFG Collaborative Research Centre TRR 326 Geometry and Arithmetic of Uniformized Structures, project number 444845124.}

\date{\today}

\begin{abstract}
In this paper, we generalize results of Bruinier on automorphic Green functions on Hilbert modular surfaces to arbitrary ideals. For instance, we compute the Fourier expansion of the unregularized Green functions, use it to regularize them, obtain the Fourier expansion of the regularized Green functions and evaluate integrals of unregularized and regularized Green functions. Furthermore, we investigate their growth behavior at the cusps in the Hirzebruch compactification by computing the precise vanishing orders along the exceptional divisors. This makes the arithmetic Hirzebruch--Zagier theorem from Bruinier, Burgos Gil and Kühn more explicit. To this end, we generalize the theory of local Borcherds products. Lastly, we investigate a new decomposition of the Green functions into smooth functions and compute and estimate the Fourier coefficients of those smooth functions.
Finally, this is employed to prove the well-definedness and almost everywhere convergence of the generating series of the Green functions and the modularity of its integral.
\end{abstract}

\maketitle

\setcounter{tocdepth}{1}
\tableofcontents
\setcounter{tocdepth}{2}

\section{Introduction}

In 1976, Hirzebruch and Zagier showed that the intersection numbers of Hirzebruch--Zagier divisors on Hilbert modular surfaces can be interpreted as the Fourier coefficients of holomorphic elliptic modular forms of weight~$2$ (cf.~\cite{hirzebruch1976intersection}).
This result can essentially be reformulated by stating that the generating series
\[
A(\tau) = c_1(\mathcal M_{-1/2}(\C)) + \sum_{m=1}^\infty Z(m) q^m \in \Q[[q]] \otimes_\Q \operatorname{CH}^1(\overline{X})_\Q
\]
is a holomorphic modular form of weight~$2$, level $D$ and nebentypus $\chi_D$ with values in $\operatorname{CH}^1(\overline{X})_\Q$.
Here, by $D$ we denote the discriminant of the underlying real quadratic number field $K$, by $c_1(\mathcal M_{k}(\C))$ the first Chern class of the line bundle of modular forms of weight~$k$, by $\overline{X}$ the Hirzebruch compactification of the Hilbert modular surface $X$ associated to $K$ and by $Z(m)$ certain extensions of the Hirzebruch--Zagier divisors $T(m)$ of discriminant $m$ on $X$ to the Hirzebruch compactification $\overline{X}$.

Kudla and Millson aimed at a generalization of this result and studied special cycles for the orthogonal group $\OGroup(p,q)$ and the unitary group $\UGroup(p,q)$ in great generality by means of the Weil representation (cf.~\cite{kudlamillson}).
In the Kudla program one is interested in having arithmetic analogues to the Hirzebruch--Zagier theorem (cf.~\cite{kudla2002derivatives} and \cite{kudla2004special}).
More precisely, instead of proving the modularity of generating series like $A(\tau)$ with coefficients in classical Chow groups, one is interested in proving the modularity of generating series with coefficients in arithmetic Chow groups. The elements of arithmetic Chow groups are arithmetic divisors (or more general arithmetic cycles) up to rational equivalence. An arithmetic divisor in turn is a pair $(Z,g)$, where $Z$ is a classical divisor (on an integral model of $\overline X$) and $g$ is a Green current corresponding to $Z$.

Particular cases to study the Kudla program are smooth compactifications of Hilbert modular surfaces. On them there are two natural choices to complete the Hirzebruch--Zagier divisors with Green currents to arithmetic divisors. The first such choice is given by the automorphic Green functions introduced by Bruinier in \cite{bruinier1999borcherds} and the second by Kudla Green functions (cf.~\cite{kudla1997central} and \cite{kudla2004special}).
The author dealt in his dissertation \cite{buckdiss} with both types. In this paper we confine ourselves to automorphic Green functions and present many results of his thesis with slight extensions. In an upcoming paper we will deal with the Kudla Green functions.

Automorphic Green functions on Hilbert modular surfaces were investigated earlier in works of Bruinier and Bruinier, Burgos Gil and Kühn under certain assumptions on the level and the discriminant (cf.~\cite{bruinier1999borcherds} and \cite{bruinier2007borcherds}).
In the present paper, we provide extensions of their results and add new results.
The first generalization is that we associate to each fractional ideal $\a \in \IK$ its Hilbert modular group $\Ga$ and corresponding Hilbert modular surfaces $\Xa$, $\XaB$ and $\XaH$ with automorphic Green functions $\Phi(\a,m,s,z)$ and its regularization $\Phi(\a,m,z)$. Classically, mainly the case $\a = \OK$ was considered. However, this generalization is necessary to investigate the classical Green function $\Phi(\OK,m,z)$ near a cusp $\kappa \in \P^1(K)$, since this corresponds to the investigation of the Green function $\Phi(\a^2,m,z)$ near the cusp~$\infty$ for $\a \in \IK$ chosen appropriately.

After discussing some essentials of Hilbert modular groups, associated lattices, Hilbert modular surfaces with their Hirzebruch--Zagier divisors and pre-log-log Green functions in the sense of \cite{BKK1} in Section~\ref{pre-section}, we start Section~\ref{auto-section} with the computation of the Fourier expansions of the unregularized Green function $\Phi(\a,m,s,z)$.
This allows us to identify a Dirichlet series of representation numbers associated to the ideal $\a$ in the Fourier expansion which is responsible for the diverging behavior of $\Phi(\a,m,s,z)$ at the harmonic point $s=1$. We then describe a regularization process and obtain the regularized Green function $\Phi(\a,m,z)$ with its Fourier expansion following the basic argument of \cite{bruinier1999borcherds} and \cite{zagier1975modular}. Here, a closer investigation of the part of the Fourier expansion which generates the logarithmic singularities along the Hirzebruch--Zagier divisors near the cusp~$\infty$ takes place.
After developing a more general theory of local Borcherds products of \cite{bruinier2001local} and \cite[p. 150--153]{123mf} in Subsection~\ref{local-bp-section}, we are able to identify local Borcherds products in the Fourier expansion of $\Phi(\a,m,z)$ and we are able to describe the vanishing order of those products at the exceptional divisors over the cusps. By our definition of the Hirzebruch--Zagier divisors $Z(\a,m)$ on the Hirzebruch compactification $\XaH$ the vanishing orders coincide with the respective multiplicities.
This together with some estimates of the remaining terms in the Fourier expansion proves our first main result and closes Section~\ref{auto-section}.
\begin{theorem}[cf.~Theorem \ref{Phi-is-green}]
The function $\Phi(\a,m,z)$ is a pre-log-log Green function on $\XaH$ with respect to the divisor $Z(\a,m)$.
\end{theorem}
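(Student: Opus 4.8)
The plan is to verify directly that $\Phi(\a,m,z)$ satisfies the defining properties of a pre-log-log Green function on $\XaH$ with respect to $Z(\a,m)$, as recalled in Section~\ref{pre-section}. Roughly, one must show: (i) on the open Hilbert modular surface $\Xa$, the function $\Phi(\a,m,z)$ is smooth away from the Hirzebruch--Zagier divisor $T(\a,m)$ and has a logarithmic singularity of the correct type along it; (ii) the associated current satisfies the Green equation $dd^c[\Phi] + \delta_{Z(\a,m)} = [\omega]$ for a smooth form $\omega$ (here automorphicity forces $\Phi$ to be an eigenfunction of the invariant Laplacian, which pins down $\omega$); and (iii) near the exceptional divisors lying over the cusps, $\Phi(\a,m,z)$ together with its derivatives has at worst pre-log-log growth, and its vanishing/pole orders along those exceptional divisors match the multiplicities built into the definition of $Z(\a,m)$.

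First I would dispose of the interior behavior. Away from the cusps, the logarithmic singularity of $\Phi(\a,m,s,z)$ along $T(\a,m)$ is classical and survives the regularization, since the regularization only modifies the behavior at $s=1$ coming from the divergent Dirichlet series of representation numbers attached to $\a$; this is exactly what the Fourier expansion computed in Section~\ref{auto-section} makes explicit. The eigenfunction property (and hence the Green equation with the appropriate smooth form $\omega$) is inherited from the analogous property of $\Phi(\a,m,s,z)$ at $s=1$ after regularization, again following the argument of \cite{bruinier1999borcherds}. So the substance of the theorem is entirely at the cusps.

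The heart of the proof is the cusp analysis near $\infty$ (all other cusps being handled by the reduction to $\infty$ for a suitable ideal, as explained in the introduction). Here I would use the Fourier expansion of $\Phi(\a,m,z)$ from Section~\ref{auto-section}: split it into the part producing the logarithmic singularities along the branches of $T(\a,m)$ accumulating at $\infty$, plus a remainder. For the singular part, the generalized theory of local Borcherds products from Subsection~\ref{local-bp-section} identifies it — up to a harmless smooth term — with $-\log\|\Psi\|^2$ for an explicit local Borcherds product $\Psi$; by construction of $Z(\a,m)$ on $\XaH$, the divisor of $\Psi$ read off on the toroidal/Hirzebruch resolution is precisely $Z(\a,m)$ near the exceptional curves, so the vanishing orders match the prescribed multiplicities and the $-\log\|\Psi\|^2$ term has exactly pre-log-log growth. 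For the remainder, one invokes the estimates already established at the end of Section~\ref{auto-section}: each Fourier term and its derivatives decay fast enough that, after pulling back along the resolution of the cusp, the remainder contributes only a log-log (in particular pre-log-log) error. Assembling these pieces gives that $\Phi(\a,m,z)$ is pre-log-log along the exceptional divisors with the correct orders, completing the verification.

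The main obstacle will be step (iii) at the cusps: matching the vanishing orders of the local Borcherds product along the exceptional divisors with the multiplicities hard-coded into $Z(\a,m)$, and simultaneously controlling the non-Borcherds remainder of the Fourier expansion uniformly enough (in all directions of the toroidal resolution, and with the requisite derivatives) to conclude pre-log-log growth rather than something worse. This is precisely where the generalization of \cite{bruinier2001local} to arbitrary ideals $\a$ is needed, and where the bookkeeping of which HZ-divisor branches meet which exceptional curve is most delicate; everything in the interior is, by contrast, a routine adaptation of the $\a=\OK$ case.
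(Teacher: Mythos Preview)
Your outline is essentially the paper's own proof: reduce to the cusp $\infty$ via the transformation law, use the explicit Fourier expansion (Theorem~\ref{nice-Phi-rep}) to split $\Phi(\a,m,z)$ into $-\log|\Psi(\a,m,z)|^2$ plus a remainder, invoke Corollary~\ref{Bp-log-sing} for the local Borcherds product piece, and verify pre-log-log growth of the remaining Fourier terms via Lemmas~\ref{ddc-for-absolute-powers}--\ref{green-equation-for-log-t}. One small correction in your phrasing: the $-\log|\Psi|^2$ term does not have ``exactly pre-log-log growth'' --- it carries the genuine logarithmic singularities along $T^\infty(\a,m)+Z^\infty(\a,m)$ (this is the content of Corollary~\ref{Bp-log-sing}); it is the \emph{remainder} (the constant, the $\log(y_1y_2)$ term, and the two Fourier tails indexed by $\nu\gg0$ and $\nu>0,\nu'<0$, plus the companion product $\prod|1-e(|\lambda|z_1)\overline{e(|\lambda'|z_2)}|$) that must be shown to be pre-log-log, and your step~(ii) on the Green equation is not treated as a separate verification in the paper but follows from Proposition~\ref{Phi-real-analytic} together with the singularity analysis.
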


\sloppy In Section~\ref{decomp-section} we present a new decomposition of the unregularized Green function $\Phi(\a,m,s,z)$ into smooth $\Ga$ invariant functions $\Phi_n(\a,m,s,z)$ for $n \in \N_0$, which induces a respective decomposition of the regularized Green function $\Phi(\a,m,z)$ into smooth $\Ga$ invariant functions as well. Because of the smoothness of the functions $\Phi_n(\a,m,s,z)$ they possess an everywhere converging Fourier expansion.
We partially compute the Fourier coefficients of this expansion and estimate the remaining Fourier coefficients which allows us to show the integrability of the regularized Green function $\Phi(\a,m,z)$ and to obtain a polynomial bound in $m$ on the integral of the absolute value $\abs{\Phi(\a,m,z)}$ of the Green function in Section~\ref{integral-section}. This last section deals with integrability and the actual integrals of the unregularized and regularized Green functions and the components of their smooth decomposition. All the integrals can be made explicit in term of volumes of Hirzebruch--Zagier divisors, for instance we show
\[
\int_{\Xa} \Phi(\a,m,s,z) \omega^2 = \frac{2 \vol(T(\a,m))}{s(s-1)}
\und
\int_{\Xa} \Phi(\a,m,z) \omega^2 = -2 \vol(T(\a,m)).
\]
in Theorem~\ref{Phi-s-integral} and Theorem~\ref{Phi-integral} for $m\in \N$ and $\Re(s)>1$.
We then employ the polynomial growth in $m$ of the integrals of $\abs{\Phi(\a,m,z)}$ to derive the following striking consequence.
\begin{theorem}[cf.~Corollary~\ref{almost-everywhere-coro}]
The generating series
\begin{align} \label{Phi-gen-series} 
\sum_{m=1}^\infty \Phi(\a,m,z) q^m
\end{align}
with $q \in \C$, $|q|<1$ converges absolutely for almost all $z \in \H^2$ and is integrable over $\Xa$.
\end{theorem}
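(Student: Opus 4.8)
The plan is to obtain this as an immediate consequence of the polynomial bound in $m$ on the $L^1$-norm of the Green function established in Section~\ref{integral-section}, namely the existence of constants $C>0$ and $A\ge 0$ (depending only on $\a$) with $\int_{\Xa}\abs{\Phi(\a,m,z)}\,\omega^2 \le C\,m^A$ for all $m\in\N$. Granting that estimate, the rest is a routine application of Tonelli's theorem, so the corollary requires essentially no new work.

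Concretely, I would fix $q\in\C$ with $\abs{q}<1$ and consider the $[0,\infty]$-valued measurable function $F(z):=\sum_{m=1}^\infty \abs{\Phi(\a,m,z)}\,\abs{q}^m$ on $\Xa$; this is well defined since each $\Phi(\a,m,z)$ is, by Theorem~\ref{Phi-is-green}, a pre-log-log Green function on $\XaH$ and hence measurable and locally integrable on $\Xa$. Interchanging summation and integration by Tonelli's theorem gives
\begin{align*}
\int_{\Xa} F(z)\,\omega^2 \;=\; \sum_{m=1}^\infty \abs{q}^m \int_{\Xa}\abs{\Phi(\a,m,z)}\,\omega^2 \;\le\; C\sum_{m=1}^\infty m^A \abs{q}^m \;<\;\infty,
\end{align*}
the last series converging because $\abs{q}<1$. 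Hence $F\in L^1(\Xa,\omega^2)$, so $F(z)<\infty$ for almost every $z\in\Xa$, and --- since the invariant measure on $\H^2$ descends to $\omega^2$ on $\Xa=\Ga\bs\H^2$ --- for almost every $z\in\H^2$; this is exactly the assertion that $\sum_{m=1}^\infty \Phi(\a,m,z)\,q^m$ converges absolutely for almost all $z\in\H^2$. For the integrability statement I would note that on the full-measure set where $F(z)<\infty$ the sum $G(z):=\sum_{m=1}^\infty \Phi(\a,m,z)\,q^m$ is defined, is an a.e.\ pointwise limit of the measurable partial sums, and satisfies $\abs{G(z)}\le F(z)\in L^1(\Xa,\omega^2)$; therefore $G\in L^1(\Xa,\omega^2)$, i.e.\ the generating series \eqref{Phi-gen-series} is integrable over $\Xa$.

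The single nontrivial ingredient is the polynomial-in-$m$ bound on $\int_{\Xa}\abs{\Phi(\a,m,z)}\,\omega^2$, and this is where I expect the real difficulty to lie --- it is precisely what the Fourier-coefficient estimates for the smooth components $\Phi_n(\a,m,z)$ of Section~\ref{decomp-section}, together with the explicit integral evaluations of Section~\ref{integral-section}, are set up to deliver. Once that bound is in hand, everything above goes through with no further input, and no special structure of the ideal $\a$ is needed.
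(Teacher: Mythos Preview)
Your proposal is correct and follows essentially the same approach as the paper: both apply Tonelli's theorem to $\sum_{m=1}^\infty \abs{\Phi(\a,m,z)}\,\abs{q}^m$ and invoke the polynomial-in-$m$ bound on $\int_{\Xa}\abs{\Phi(\a,m,z)}\,\omega^2$ (the paper uses the explicit $O(m^2\log m)$ from Theorem~\ref{Phi-integrable-estimate}) to conclude finiteness of the integral, from which almost-everywhere convergence and integrability follow. Your write-up spells out the a.e.\ and $L^1$ conclusions a bit more carefully than the paper's terse ``This implies all stated assertions'', but the argument is the same.
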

Note that in the Kudla program one deals with arithmetic generating series where the coefficients are arithmetic divisors interpreted as elements of the first arithmetic Chow group. Here however, we make sense of the generating series over the actual Green functions $\Phi(\a,m,z)$. The pointwise limit which exists for almost all $z \in \Xa$ gives rise to a current because of its integrability. However, as function on $\Xa$ the limit is almost nowhere continuous (cf.~Remark~\ref{almost-nowhere-continuous}).

Finally, we compute the integral of the generating series~\eqref{Phi-gen-series} over $\Xa$ and prove its modularity in Theorem~\ref{modular-integral}.

\subsection*{Acknowledgements}

The author thanks his doctoral advisor Jan H. Bruinier for his support.
Since many results of this paper are part of the author's dissertation, the more detailed acknowledgements of \cite{buckdiss} apply here as well.

\section{Preliminaries} \label{pre-section} 

\subsection{The underlying real quadratic number field}

Throughout the paper $K$ is a real quadratic field of discriminant $D$. With $x \mapsto x'$ we denote the conjugation in $K$, with $N(x):=xx'$ and $\tr(x):=x+x'$ the norm and the trace. The trace is a $\Q$ linear map and the norm is a non-degenerate quadratic form turning $K$ into a rational quadratic space of signature $(1,1)$. Another non-degenerate quadratic form is induced by the trace $(x,y)\mapsto \tr(xy)$. The latter is positive definite, i.e., of signature $(2,0)$.
The ring of integers of $K$ is given by
\[
\OK = \Z + \tfrac{D+\sqrt{D}}{2} \Z.
\]
By Dirichlet's unit theorem, there exists a unique $\ep_0>1$ (we understand $K$ as subfield of $\R$ with $\sqrt{D}>0$) such that
\[\OK^\times = \set{\pm \ep_0^k \defC k \in \Z}. \]

Analogously, there  exists a unique $\ep_1>1$ such that
\[ \OK^+ := \OK^\times \cap K^+ = \set{ \ep_1^k \defC k \in \Z}
\with
K^+:=\set{x \in K \defC x \gg 0}
. \]
Here  $x \gg 0$ being totally positive means $x>0$ and $x'>0$. If $N(\ep_0)=1$, we have $\ep_1= \ep_0$ and otherwise $\ep_1 = \ep_0^2$.

By $\IK$ we denote the ideal group of $K$.
Recall that two ideals $\a, \b \in \IK$ belong to the same genus if and only if there exists a $\lambda \in K$ with $N(\lambda) N(\a)=N(\b)$.
The dual $\a^\vee$ of an ideal $\a \in \IK$ with respect to the norm form is given by ${(\a\d)'}^{-1}$ and with respect to the trace form by $(\a\d)^{-1}$.
Here, $\d$ denotes the \emph{different} $\d=(\sqrt{D}) = \sqrt{D}\OK$.
The volume of $\a$ is given by $\vol(\a)=N(\a)\sqrt{D}$ with respect to both forms.

\subsection{Hilbert modular groups} 
In this paper we consider the Hilbert modular groups
\[
\Ga := \SLa := \matrix{\OK}{\a^{-1}}{\a}{\OK} \cap \SLK
\]
associated to $\a \in \IK$.
Recall that they act by
\[
\abcd (\alpha : \beta) := (a\alpha +b \beta : c\alpha + d \beta)
\]
on $\P^1(K)$. The quotient of this operation defines the cusps of $\Ga$ of which there are $h_K$ many with $h_K$ being the class number of $K$.
For relations between different Hilbert modular groups and lattices associated to them it is useful to introduce the sets 
\begin{align} \label{SLM-def} 
\SLM{\a}{\b} := \matrix{\a}{(\a\b)^{-1}}{\a\b}{\a^{-1}} \cap \SLK
\end{align}
for $\a,\b \in \IK$. They satisfy the equations
\begin{align}
\SLM{\a}{\b}^{-1} = \SLM{\a^{-1}}{\a^2\b}
\und
\SLM{\a_1}{\b} \SLM{\a_2}{\a_1^2 \b} = \SLM{\a_1 \a_2}{\b}.
\end{align}
For example they imply
\[
M^{-1} \SLb M = \SLp{\a^2\b}.
\]
for all $M \in \SLM{\a}{\b}$. Hence, the Hilbert modular groups $\Ga$ and $\Gb$ are conjugated if $\a\b$ is a square in the group $\IK$.

\subsection{Lattices associated to ideals}
Throughout the paper $V$ denotes the $\Q$ vector space
\begin{align} \label{def-V} 
V := \set{ \matrix{a}{\lambda'}{\lambda}{b} \in K^{2\times 2} \defC a,b \in \Q,\lambda  \in K } = \set{A \in K^{2\times 2} \defC A^\top = A'}.
\end{align}
equipped with the determinant as quadratic form of signature $(2,2)$.
Using the map
\[
\SLK \to \OGroup(V),
\quad
M \mapsto (A \mapsto M.A := M A (M')^\top)
\]
whose kernal is given by $\pm 1$ we can view $\PSLK := \SLK / \set{\pm 1}$ as subgroup of $\OGroup(V)$.

We associate to each $\a \in \IK$ the lattice
\[
L(\a) := \set{ \matrix{a}{\lambda'}{\lambda}{b} \in V \defC a \in \Z, b \in N(\a)\Z,\lambda  \in \a }.
\]
Its dual is given by
\begin{align} \label{La-dual} 
L(\a)^\vee = \set{ \frac{1}{N(\a)}\matrix{a}{\lambda'}{\lambda}{b} \in V \defC a \in\Z, b \in  N(\a)\Z,\lambda  \in \a\d^{-1} }
\end{align}
and we have
\begin{align} \label{lattice-translate} 
M.L(\a^2\b) = N(\a) L(\b)
\und
M.L(\a^2\b)^\vee = \frac{L(\b)^\vee}{N(\a)}
\end{align}
for all $\a, \b \in \IK$ and $M \in \SLM{\a}{\b}$.
In particular, the lattices $L(\a)$ and $L(\a)^\vee$ are invariant under $\Ga$.

With $\H := \set{z \in \C \defC \Im(z)>0}$ being the complex upper half plane every point $z \in \H^2$ gives rise to an orthogonal decomposition $W_z \oplus \tilde W_z$ of $V_\R := V \otimes_\Z \R$ such that the quadratic form (the determinant) restricted to $W_z$ is negative definite and the determinant restricted to $\tilde W_z$ is positive definite. Namely, the vectors
\[
X_z := \matrix{x_1x_2-y_1y_2}{x_1}{x_2}{1}
,\quad
Y_z := \matrix{x_1y_2+x_2y_1}{y_1}{y_2}{0}
\]
and
\[
\tilde X_z := \matrix{x_1x_2+y_1y_2}{x_1}{x_2}{1}
,\quad
\tilde Y_z := \matrix{x_1y_2-x_2y_1}{-y_1}{y_2}{0}
\]
with $z = (z_1,z_2)=(x_1+iy_1,x_2+iy_2)$
form an orthogonal basis of $V_\R$. The first two span $W_z$ and the last two $\tilde W_z$.
We obtain a decomposition $\det=q_{W_z}+q_{\tilde W_z}$ with $q_{W_z}$ being the projection onto $W_z$ composed with the determinant. For later use we define $h(A,z) := -q_{W_z}(A)$ and obtain the majorant
\[
q_z(A) := h(A,z) + q_{\tilde W_z}(A) = \det(A)+2h(A,z),
\]
a positive definite quadratic form on $V_\R$.
For elements $A=\sMatrix{a}{\lambda'}{\lambda}{b}\in V_\R$ we obtain
\begin{align} \label{hA-frac} 
h(A,z) = \frac{|b z_1 z_2  - \lambda z_1 - \lambda' z_2 +a |^2}{4 y_1y_2}
\und
q_{\tilde W_z}(A)  = \frac{|b \zb_1 z_2  - \lambda \zb_1 - \lambda' z_2 +a |^2}{4 y_1y_2}.
\end{align}
For anisotropic $A=\sMatrix{a}{\lambda'}{\lambda}{b}\in V_\R$ the normalized function
\[
g(A,z) := \frac{h(A,z)}{\det(A)} = \frac{\abs{bz_1z_2 - \lambda z_1 - \lambda' z_2 + a}^2}{4 y_1y_2 \det(A)}
\]
comes in handy from time to time. We have
\begin{align} \label{hg-transform} 
h(A,z) = h(M.A,Mz)
\und
g(A,z) = g(M.A,Mz)
\end{align}
for all $M \in \SLK$.
\begin{remark} \label{gA-with-d-remark} 
Using the $\GLR$ invariant hyperbolic distance $d(z_1,z_2) := |z_1-z_2|^2/y_1y_2$ we can express $g(A,z)$ with $S := \sMatrix{0}{-1}{1}{0}$ by
\[
g(A,z) = \frac{d(z_1,ASz_2)}{4} = \frac{|z_1-ASz_2|^2}{4\Im(z_1)\Im(SAz_2)}.
\]
\end{remark}

\subsection{Hilbert modular surfaces and Hirzebruch--Zagier divisors}
We associate to each $\a \in \IK$ its Hilbert modular surface
$\Xa := \Ga \bs \H^2$. By $\XaB := X(\a) \cup \Ga \bs \P^1(K)$ we denote its Baily--Borel compactification (cf.~\cite[Chapter II, Section 1.2]{123mf} or \cite[Chapter I, Section 2]{freitag1990hilbert} for an introduction), a normal complex space.
The cusps in $\XaB$ are highly singular but can be desingularized; one obtains the Hirzebruch compactification $\XaH$ (cf.~\cite[Chapter II]{geer1988hms}) which is smooth at the boundary (the only left over singular points are the elliptic fix points which are finite quotient singularities). In the Hirzebruch compactification every cusp $\kappa$ is replaced by an exceptional divisor $E^\kappa(\a)$ which consists of finitely many glued $S_k \cong \P^1(\C)$ ($k \in \Z/r_\kappa\Z$ for $r_\kappa \in \N$ chosen appropriately for each cusp $\kappa$).
We call the sum of all exceptional divisors $E(\a) := \sum_{\kappa \in \Ga \bs \P^1(K)} E^\kappa(\a)$.

Let $\b \in \IK$ and $\kappa = (\alpha : \beta) \in \P^1(K)$. Then there exists a matrix $M \in \SLM{\a}{\b}$ with $\a := \alpha \OK + \beta \b^{-1}$ and $M \infty = \kappa$. Now the map
\begin{align} \label{M-iso} 
(\H^2)^* \to (\H^2)^*,\quad z \mapsto M^{-1}z
\end{align}
induces an isomorphism $\XbB \isoArrow \XB{\a^2\b}$ mapping the cusp $\kappa$ of $\XbB$ to the cusp~$\infty$ of $\XB{\a^2\b}$. That is why it is enough to study the cusp~$\infty$ for all $\Xa$ instead of all cusps of $X(\OK)$. To study the desingularized cusp~$\infty$ we have to express it in local coordinates. We call them $(u,v) \in \C^2$ and they satisfy
\begin{align} \label{local-to-z} 
\matrixCol{2\pi i z_1}{2\pi i z_2} = \matrix{\alpha}{\beta}{\alpha'}{\beta'}\matrixCol{\log u}{\log v}
\end{align}
with respect to a totally positive basis $(\alpha,\beta)$ of $\a^{-1}$. The $S_k$ correspond then (up to one point) to $u=0$ ($v=0$, respectively) and we have the following lemma.
\begin{lemma} \label{exponentials-in-local} 
Let $\nu \in \a\d^{-1}$. Then the following functions are $\a^{-1}$ invariant and can be expressed in local coordinates $(u,v)$ with respect to $(\alpha,\beta)$:
\begin{align*}
e(\tr(\nu z)) = e(\nu z_1)e(\nu' z_2) &= u^{\tr(\alpha \nu)} v^{\tr(\beta \nu)}, \\
e(\tr(\nu \overline{z})) = e(\nu \overline{z_1})e(\nu' \overline{z_2}) &= \overline{u}^{-\tr(\alpha \nu)} \overline{v}^{-\tr(\beta \nu)}, \\
e(\nu z_1)e(\nu' \overline{z_2})
&= u^{\alpha \nu} \overline{u}^{-\alpha'\nu'} v^{\beta \nu} \overline{v}^{-\beta'\nu'},\\
e(\nu \overline{z_1})e(\nu' z_2)
& = u^{\alpha' \nu'} \overline{u}^{-\alpha\nu} v^{\beta' \nu'} \overline{v}^{-\beta\nu}.
\end{align*}
The evaluation of the third and fourth line is independent of the chosen branch of the logarithm $\log(u)$ as long as the branch of $\log(\overline{u})$ is chosen accordingly, i.e., $\log(\overline{u}):=\overline{\log(u)}$. The same holds for $\log(v)$ and $\log(\overline{v})$, respectively.
\end{lemma}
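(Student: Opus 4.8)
The plan is to prove the lemma by a direct computation starting from the defining relation~\eqref{local-to-z}. Writing that relation out componentwise gives $2\pi i z_1 = \alpha \log u + \beta \log v$ and $2\pi i z_2 = \alpha' \log u + \beta' \log v$. Since $\alpha,\beta$ and their conjugates are real, conjugating these two identities and using the stated convention $\log\bar u := \overline{\log u}$, $\log\bar v := \overline{\log v}$ yields $2\pi i \bar z_1 = -\alpha \log\bar u - \beta \log\bar v$ and $2\pi i \bar z_2 = -\alpha' \log\bar u - \beta' \log\bar v$. Substituting into $e(\nu z_1) = \exp(\nu \cdot 2\pi i z_1)$ and the analogous expressions for $e(\nu' z_2)$, $e(\nu \bar z_1)$, $e(\nu' \bar z_2)$ produces $e(\nu z_1) = u^{\alpha\nu} v^{\beta\nu}$, $e(\nu' z_2) = u^{\alpha'\nu'} v^{\beta'\nu'}$, $e(\nu\bar z_1) = \bar u^{-\alpha\nu}\bar v^{-\beta\nu}$, $e(\nu'\bar z_2) = \bar u^{-\alpha'\nu'}\bar v^{-\beta'\nu'}$; multiplying the appropriate pairs and collapsing the exponents in the first two lines via $\tr(\alpha\nu) = \alpha\nu + \alpha'\nu'$ gives the four displayed formulas. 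This is the purely mechanical part of the argument.

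The two substantive points are the asserted $\a^{-1}$-invariance and the branch-independence. For the invariance, translation by $\mu \in \a^{-1}$ acts as $(z_1,z_2)\mapsto (z_1+\mu, z_2+\mu')$, and since $\mu,\mu'$ are real it also sends $\bar z_1 \mapsto \bar z_1 + \mu$ and $\bar z_2 \mapsto \bar z_2 + \mu'$; hence each of the four functions is multiplied by a product of factors $e(\nu\mu)$ and $e(\nu'\mu')$, i.e. by a power of $e(\tr(\nu\mu))$. Because $\nu \in \a\d^{-1}$ and $\mu \in \a^{-1}$ we get $\nu\mu \in \d^{-1}$, and the defining property of the inverse different (the dual of $\OK$ with respect to the trace form) gives $\tr(\d^{-1}) \subseteq \Z$, so $e(\tr(\nu\mu)) = 1$. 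The same arithmetic input settles well-definedness: $\alpha\nu,\beta\nu \in \a^{-1}\cdot\a\d^{-1} = \d^{-1}$, hence $\tr(\alpha\nu),\tr(\beta\nu) \in \Z$, so lines one and two are honest Laurent monomials in $u,v$. In lines three and four the individual exponents need not be integral, but replacing $\log u$ by $\log u + 2\pi i k$ forces $\log\bar u \mapsto \log\bar u - 2\pi i k$ through the convention, so $u^{\alpha\nu}\bar u^{-\alpha'\nu'}$ picks up the factor $e(k\,\tr(\alpha\nu)) = 1$, and likewise for the $v$-part; this is exactly the consistency statement in the last sentence of the lemma.

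I do not expect a genuine obstacle here: the only thing one must be careful about is bookkeeping the interaction of complex conjugation with the choice of branch of the logarithm, which is precisely the mechanism that renders lines three and four well defined. I would also remark in passing that the hypothesis that $(\alpha,\beta)$ is a totally positive basis of $\a^{-1}$ guarantees $|u|,|v|<1$ near the cusp~$\infty$, so that these monomials extend across the exceptional divisor $E^\infty(\a)$; this is not needed for the identities themselves but motivates why they are stated in this form.
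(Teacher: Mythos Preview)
Your argument is correct and complete. The paper states this lemma without proof, so there is nothing to compare against; your direct verification from the defining relation~\eqref{local-to-z}, together with the trace-duality fact $\tr(\d^{-1})\subseteq\Z$ to handle both the $\a^{-1}$-invariance and the branch-independence, is exactly the computation the reader is expected to supply.
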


As Kähler manifold $\Xa$ possesses a Kähler form
\begin{align} \label{omega-def} 
\omega := \eta_1+\eta_2
\with
\eta_j := \frac{1}{4 \pi} \frac{dx_j dy_j}{y_j^2}.
\end{align}
It induces the volume form $\omega^2$ which allows us to integrate over the Hilbert modular surface. For instance, its volume is given by $\vol(X(\a)) = \zeta_K(-1) = L(-1,\chi_D) \zeta(-1)$.

For non-zero $A=\sMatrix{a}{\lambda'}{\lambda}{b}\in V$ we define
\[
T_A := \set{ z \in \H^2 \defC h(A,z)=0 } = \set{ z \in \H^2 \defC bz_1z_2 - \lambda z_1 - \lambda' z_2 +a=0 }
\]
leading us for $m \in \N$ to the definition of the Hirzebruch--Zagier divisors
\begin{align} \label{Tam-def} 
T(\a,m) := \sum_{ \substack{A \in L(\a)^\vee / \set{\pm 1}\\ \det(A)=m/(N(\a)D)}  } T_A.
\end{align}
Because of the transformation law~\eqref{hg-transform} $T(\a,m)$ is invariant under $\Ga$ and therefore it is well-defined on $\Xa$.
By an argument similar to \cite[Section 3.2]{bruinier2007borcherds} it can be shown that
\begin{align} \label{Tm-volume-sum} 
\vol(T(\a,m))
= \sum_{ \substack{A \in \Ga \bs L(\a)^\vee / \set{\pm 1}\\ \det(A)=m/(N(\a)D)}  } \vol(T_A)
= \sum_{ \substack{A \in \Ga \bs L(\a)^\vee / \set{\pm 1}\\ \det(A)=m/(N(\a)D)}  } \vol( \Gas{\pm A}' \bs \H )
\end{align}
with respect to the pullback of the Kähler form $\omega$.
Here, the stabilizer $\Gas{ \pm A}'$ is given by
\[
\Gas{ \pm A}' := \set{ M' \defC M \in \Ga \und M.A \in \set{\pm A} }.
\]
A component $T_A \se \H^2$ with $A = \sMatrix{a}{\lambda'}{\lambda}{b}$ runs into the cusp~$\infty$ if and only if $b=0$.
Therefore, we define
\begin{align*}
\Lambda(\a,m) := &\set{\lambda \in \a\d^{-1} \defC N(\lambda) = - \frac{mN(\a)}{D} },\ 
\Lambda^\pm(\a,m) := &\set{\lambda \in \Lambda(\a,m)  \defC \sgn(\lambda)= \pm 1}
\end{align*}
and
\begin{align} \label{Tinf-def} 
T^\infty(\a,m) := \sum_{ \substack{A=\sMatrix{a}{\lambda'}{\lambda}{0} \in L(\a)^\vee / \set{\pm 1}\\ \det(A)=m/(N(\a)D)}  } T_A
=\sum_{\lambda \in \Lambda^+(\a,m)} \sum_{a\in \Z} \set{ z \in \H^2 \defC \tr(\lambda z)=a}.
\end{align}
The divisor $T^\infty(\a,m)$ is invariant under
\[
\Ginf = \set{ \matrix{\ep}{\mu}{0}{\ep^{-1}} \defC \ep \in \OK^\times, \mu \in \a^{-1} }.
\]
Now, consider the real codimension one submanifold
\[
S(\a,m) := \bigcup_{\lambda \in \Lambda^+(\a,m)} S_\lambda
\with
S_\lambda := \set{z \in \H^2 \defC \tr(\lambda y)=0}
\]
which contains the divisor $T^\infty(\a,m)$ as subset.
The connected components of its complement $\H^2 \setminus S(\a,m)$ are the so-called \emph{Weyl chambers of index $m$}.
The cyclic group $(\OK^\times)^2$ acts on $\Lambda^+(\a,m)$ by multiplication. The quotient is finite and with respect to a fixed $w \in (\R^+)^2$ it is possible to specify a unique representative from each orbit. Namely, we call $\lambda \in \Lambda^+(\a,m)$ \emph{reduced} with respect to $w$ if $\lambda$ is minimal with $\tr(\lambda w) \ge 0$ in its $(\OK^\times)^2$ orbit. The set of all reduced $\lambda \in \Lambda^+(\a,m)$ with respect to $w$ is denoted by $R(\a,m,w)$.
For all $w=(y_1,y_2)$ with $z \in W$ in a fixed Weyl chamber $W$ the set $R(\a,m,w)$ is the same. Therefore, we allow the notation $R(\a,m,W)$ with $W$ being a Weyl chamber.
We define
\[
\rho(\a,m,w) := \sum_{ \lambda \in  R(\a,m,w)} \frac{\lambda}{\ep_0^{2}-1}
\]
and call it \emph{Weyl vector} with respect to $w$ ($\rho(\a,m,W)$, respectively). 

Those Weyl vectors allow us now to define the completed Hirzebruch--Zagier divisors $Z(\a,m)$ for the Hirzebruch compactification $\XaH$.
Namely, the component of $Z(\a,m)$ living on $E^\infty(\a)$ is given by
\begin{align} \label{Zinf-def} 
Z^\infty(\a,m) := \sum_{k=1}^{r_\infty} \tr(\rho(\a,m,A_k)A_k) S_k.
\end{align}
Here, $A_k$ is the element of $\a^{-1}$ corresponding to $S_k$ due to \cite{geer1988hms}.
For other cusps than $\infty$, the divisor $Z^\kappa(\a,m)$ is given by the image of $Z^\infty(\a\b^2,m)$ under the isomorphism $\overline{X(\a\b^2)} \isoArrow \XaH$ (here $\b \in \IK$ has to be chosen appropriately, cf.~\eqref{M-iso}).
We then define
\[
Z(\a,m) := T(\a,m) + \sum_{\kappa \in \Ga \bs \P^1(K)} Z^\kappa(\a,m)
\]
where $\kappa$ runs through all cusps of $\Ga$.

\subsection{Logarithmic singularities and pre-log-log Green functions} 

We expect the reader to be familiar with the concept of logarithmic singularities and pre-log-log Green functions (cf.~\cite[Section 1.2]{bruinier2007borcherds} for details).
We use the following scaling of logarithmic singularities in this paper: Let $h$ be a holomorphic function. Then the function $\log(|h|^2)$ has logarithmic singularities along the divisor $\div(h)$. This scaling implies the satisfaction of the Green equation
\[
dd^c [g] + \delta_{Z} = [dd^c g]
\]
for a function $g$ with logarithmic singularities along the divisor $-Z$. Note that a Green function for a divisor $Z$ has logarithmic singularities along $-Z$ (and not $+Z$).

In our situation, the logarithmic singularities of the Green functions we investigate are along the Hirzebruch--Zagier divisors $-Z(\a,m)$. Additionally, we allow the pre-log-log growth along the exceptional divisor $E(\a)$.
\begin{remark} \label{pre-log-log-condition} 
Recall that a function $f$ is a pre-log-log growth form if and only if
\[
f,\quad
w_1 \log(|w_1|) \frac{\del f}{\del w_1},\quad
w_1  w_2 \log(|w_1|)\log(|w_2|) \frac{\del^2 f}{\del w_1\del w_2}
\]
have log-log growth for $w_1,w_2 \in \set{z_1,\dots,z_k,\zb_1,\dots,\zb_k}$ and $w_1 \ne w_2$.
In most cases where we have to prove a function $f$ to be a pre-log-log growth form we will see that the given terms even go to $0$ for small $z_i$ ($1 \le i \le k$).
\end{remark}

\section{Investigation of automorphic Green functions} \label{auto-section} 

\subsection{Unregularized Fourier expansion} \label{fourier-exp-and-reg} 
In this subsection we generalize the definition and many results of the automorphic Green function living on $X(\OK)$ from \cite{bruinier1999borcherds} to automorphic Green function living on $X(\a)$ for arbitrary ideals $a \in \IK$. We follow \cite[Section 3]{bruinier1999borcherds} and skip most of the proofs since they are similar to the proofs of the source.
\begin{definition} \label{Phi-amsz-def} 
For $\a \in \IK$, $m \in \N$, $s \in \C$ with $\Re(s)>1$ and $z \in \H^2 \setminus T(\a,m)$ we define
\[
\Phi(\a,m,s,z) :=
\sum_{ \substack{ A \in L(\a)^\vee \\ \det(A) = m/(N(\a)D) }  } Q_{s-1} \br{ 1+ 2 g(A,z) }.
\]
\end{definition}

Analogous to \cite{bruinier1999borcherds} we are rather interested in $\Phi(\a,m,s,z)$ at the harmonic point $s=1$, but unfortunately the series diverges at $s=1$. Therefore, we regularize $\Phi(\a,m,s,z)$ at $s=1$. For this purpose we have to compute the Fourier expansion of $\Phi(\a,m,s,z)$ and extend the Fourier coefficients meromorphically in $s$.

\begin{proposition} \label{first-Phi-prop} 
The series defining $\Phi(\a,m,s,z)$ converges normally for $\Re(s)>1$ and $z \in \H^2 \setminus T(\a,m)$ to a smooth function in $z$ which is $\Ga$ invariant and holomorphic in $s$. It is an eigenfunction of the Laplacian $\Delta_j$ for $j \in \set{1,2}$ with eigenvalue $s(s-1)$.
Further, we have for $\a,\b \in \IK$ and $M \in \SLM{\a}{\b}$
\[ \Phi(\b,m,s,Mz) = \Phi(\a^2\b,m,s,z). \]
\end{proposition}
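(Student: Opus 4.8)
My plan is to obtain the analytic assertions — normal convergence, smoothness in $z$, holomorphy in $s$, and the Laplace-eigenvalue statement — exactly as in Bruinier's treatment of the case $\a=\OK$ in \cite[Section~3]{bruinier1999borcherds}, and to prove only the transformation law in detail, as it is the one genuinely new ingredient. For normal convergence on $\Re(s)>1$ I would combine the standard behaviour of the Legendre function of the second kind $Q_{s-1}(1+2t)$ — a logarithmic singularity as $t\to 0^+$ and decay $\sim t^{-s}$ as $t\to\infty$ — with a lattice-point count on the affine quadric $\{A\in V_\R:\det(A)=m/(N(\a)D)\}$ with respect to the positive definite majorant $q_z=\det+2h(\,\cdot\,,z)$; since $g(A,z)=h(A,z)/\det(A)$, this bounds the number of summands with $g(A,z)\le X$ and, after summing over dyadic ranges of $X$, yields local uniform convergence of the series and of each of its partial derivatives in $z_j,\zb_j$ on $\H^2\setminus T(\a,m)$, hence smoothness in $z$; joint local uniform convergence in $(s,z)$ together with the holomorphy of $s\mapsto Q_{s-1}(1+2t)$ on $\Re(s)>1$ gives holomorphy in $s$. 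For the eigenvalue statement I would invoke Remark~\ref{gA-with-d-remark}: with $z_2$ fixed, $1+2g(A,z)=1+\tfrac12 d(z_1,ASz_2)$ is the hyperbolic cosine of the distance between $z_1$ and $ASz_2\in\H$ (note $\det(A)>0$), so each summand is, as a function of $z_1$, a radial solution of the Legendre differential equation and therefore a $\Delta_1$-eigenfunction of eigenvalue $s(s-1)$; the same holds in $z_2$, and normal convergence justifies differentiating the series term by term.

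The core of the proof is the transformation law. Fix $\a,\b\in\IK$ and $M\in\SLM{\a}{\b}$ (see \eqref{SLM-def}). By \eqref{hg-transform} one has $g(A,Mz)=g(M^{-1}.A,z)$ for every $A\in V_\R$, so
\[
\Phi(\b,m,s,Mz)=\sum_{\substack{A\in L(\b)^\vee\\ \det(A)=m/(N(\b)D)}}Q_{s-1}\br{1+2g(M^{-1}.A,z)}.
\]
Now I would reindex by $A\mapsto M^{-1}.A$. The relation $\SLM{\a}{\b}^{-1}=\SLM{\a^{-1}}{\a^2\b}$ together with $(\a^{-1})^2(\a^2\b)=\b$ lets me apply the dual part of \eqref{lattice-translate} to $M^{-1}$, giving $M^{-1}.L(\b)^\vee=N(\a)\,L(\a^2\b)^\vee$; moreover the action $A\mapsto M^{-1}.A$ preserves the determinant. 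Hence writing $M^{-1}.A=N(\a)A'$ sets up a bijection of the index set above with $\{A'\in L(\a^2\b)^\vee:\det(A')=m/(N(\a^2\b)D)\}$, which is exactly the index set defining $\Phi(\a^2\b,m,s,z)$. Finally $g$ is homogeneous of degree $0$ in its first argument (both $h$ and $\det$ being quadratic), so $g(M^{-1}.A,z)=g(N(\a)A',z)=g(A',z)$, and the two series are term-by-term equal; this proves $\Phi(\b,m,s,Mz)=\Phi(\a^2\b,m,s,z)$ (and along the way that $M$ carries $\H^2\setminus T(\a^2\b,m)$ onto $\H^2\setminus T(\b,m)$, so both sides are defined on matching domains).

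The $\Ga$-invariance is then the special case with inner ideal $\OK$: from \eqref{SLM-def} one has $\SLM{\OK}{\a}=\matrix{\OK}{\a^{-1}}{\a}{\OK}\cap\SLK=\Ga$, and $\OK^2\a=\a$, so $\Phi(\a,m,s,Mz)=\Phi(\a,m,s,z)$ for all $M\in\Ga$. I expect the only non-routine step to be the bookkeeping in the transformation law — tracking the ideals correctly through $\SLM{\a}{\b}^{-1}=\SLM{\a^{-1}}{\a^2\b}$ and observing that the scalar $N(\a)$ thrown off by \eqref{lattice-translate} is absorbed by the degree-$0$ homogeneity of $g$. Everything analytic runs parallel to \cite{bruinier1999borcherds}; if one did not wish to quote that source, the more technical point would instead be making the lattice-point count uniform as $z$ approaches $T(\a,m)$.
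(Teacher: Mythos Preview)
Your proposal is correct and follows essentially the same approach as the paper: the paper's proof also derives the transformation law from \eqref{lattice-translate} and \eqref{hg-transform}, deduces the $\Ga$-invariance as the special case with first ideal $\OK$ (so that $\SLM{\OK}{\b}=\Gb$), and simply refers to \cite{bruinier1999borcherds} for the analytic assertions. Your write-up supplies more detail than the paper's terse two-line proof, but the ideas coincide.
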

\begin{proof}
The transformation law involving the ideals $\a,\b \in \IK$ implies the $\Ga$ invariance of $\Phi(\a,m,s,z)$ (choose $\a= \OK$, then $\SLM{\a}{\b} = \Gb$). The transformation law is a consequence of \eqref{lattice-translate} and \eqref{hg-transform}. For the rest follow \cite{bruinier1999borcherds}.
\end{proof}

Using the decomposition
\[
\Phi(\a,m,s,z)
= \sum_{b \in \Z} \Phi^b(\a,m,s,z)
= \Phi^0(\a,m,s,z)  + 2\sum_{b=1}^\infty \Phi^b(\a,m,s,z)
\]
with
\[
\Phi^b(\a,m,s,z) :=\sum_{ \substack{ A =\sMatrix{a}{\lambda'}{\lambda}{b} \in L(\a)^\vee \\ \det(A) = m/(N(\a)D) }  } Q_{s-1} \br{ 1+ 2 g(A,z) }
\]
we can compute the Fourier expansion of $\Phi(\a,m,s,z)$. Note that each single $\Phi^b(\a,m,s,z)$ is still invariant under $\Ginf$, in particular under translation by $\a^{-1}$ and possesses therefore a Fourier expansion.
Furthermore, for $b \in \N$ the function $\Phi^b(\a,m,s,z)$ has no singularity for arguments $z \in \H^2$ with $\Im(z) > B := m/(N(\a)Db^2)$.
For $b \in \N$ let $R^b(\a,m)$ be a set of representatives of
\[
\set{\lambda \in \a\d^{-1} / b\a \defC \frac{N(\sqrt{D}\lambda)}{N(\a)} \equiv m \pmod{bD}}.
\]
We compute for $b \in \N$
\begin{align*}
\Phi^b(\a,m,s,z)
&= \sum_{ \substack{a \in \Z/N(\a),\:\lambda \in \a\d^{-1}/N(\a) \\ ab-N(\lambda) = m/(N(\a)D) } }  Q_{s-1} \br{1+ \frac{|bz_1z_2-\lambda z_1-\lambda'z_2+a|^2}{2y_1y_2 m/(N(\a)D)} }\\
&= \sum_{ \substack{a \in \Z/N(\a),\:\lambda \in \a\d^{-1}/N(\a) \\ ab-N(\lambda) = m/(N(\a)D) } }  Q_{s-1}
\br{1+ \frac{| (z_1-\lambda'/b)(z_2-\lambda/b) +B |^2}{2y_1y_2 B} }\\
&= \sum_{\lambda \in R^b(\a,m)} \sum_{\mu \in \a}
Q_{s-1} \br{1+ \frac{ \abs{ \br {z_1- \frac{\lambda'+b\mu'}{N(\a)b}}\br{z_2 - \frac{\lambda+b\mu}{N(\a)b}} + B }^2}{2y_1y_2 B} }\\
&= \sum_{\lambda \in R^b(\a,m)} \sum_{\mu \in \a^{-1}}
Q_{s-1} \br{1+ \frac{ \abs{ \br {z_1 + \mu +  \frac{\lambda'}{N(\a)b}} \br {z_2 + \mu' +  \frac{\lambda}{N(\a)b}} + B }^2}{2y_1y_2 B} }.
\end{align*}
Hence, the problem is deduced to computing the Fourier expansion of the $\a^{-1}$ periodic function $H_s^B (\a^{-1},z)$ with
\begin{align} \label{HB-def}
H_s^B (\b,z) := \sum_{\mu \in \b} Q_{s-1}\br{ 1 + \frac{|(z_1+\mu)(z_2+\mu')+B|^2}{2y_1y_2B} }.
\end{align}
Namely, let
\[
H_s^B (\b,z) = \sum_{\nu \in (\b\d)^{-1} } b_s^B(\b,\nu,y) e(\tr(\nu x))
\]
be the Fourier expansion of $H_s^B (\b,z)$. Then we have
\begin{align*}
\Phi^b(\a,m,s,z)
&= \sum_{\lambda \in R^b(\a,m)} \sum_{\nu \in \a\d^{-1}} b_s^B(\a^{-1},\nu,y)  e \br{\tr  \br{\nu \br{ x +  \tfrac{\lambda'}{N(\a)b} }} }\\
&= \sum_{\nu \in \a\d^{-1}} \br{  \sum_{\lambda \in R^b(\a,m)} e \br{\tr  \br{\tfrac{\nu\lambda'}{N(\a)b} } }  }  b_s^B(\a^{-1},\nu,y) e(\tr(\nu x))\\
&= \sum_{\nu \in \a\d^{-1}} G^b(\a,m,\nu)  b_s^B(\a^{-1},\nu,y) e(\tr(\nu x))
\end{align*}
with the finite exponential sum
\begin{align} \label{gaus-sum} 
G^b(\a,m,\nu) := \sum_{ \substack{ \lambda \in \a\d^{-1} / b\a \\ \frac{N(\lambda)}{N(\a)} \equiv -\frac{m}{D}\: (b\Z) }  } e \br{\tr  \br{\tfrac{\nu\lambda'}{N(\a)b} } } .
\end{align}
\begin{definition} \label{our-bessel} 
For shorter notation we define for $\nu \in K^\times$
\[
\I^\nu_\kappa (z) := 
\begin{cases}
I_\kappa(z),&\quad N(\nu)>0,\\
J_\kappa(z),&\quad N(\nu)<0.
\end{cases}
\]
Here, $I_\kappa(z)$ and $J_\kappa(z)$ denote the respective Bessel functions, i.e.,
$I_\kappa(z)$ is the modified Bessel function of the first kind (cf.~\cite[10.25.2]{handbook})
and $J_\kappa(z)$ is the Bessel function of the first kind (cf.~\cite[10.2.2]{handbook}).
By $K_\kappa(z)$ we denote the modified Bessel function of the second kind (cf.~\cite[10.25.3]{handbook}).
\end{definition}

\begin{lemma} \label{Phi-b-fc} 
Let $\b \in \IK$ and $B>0$.
The function $H_s^B (\b,z)$ defined by equation~\eqref{HB-def} converges normally for $\Re(s)>1/2$ and for those $z \in \H^2$ at which no term in the series has a singularity, i.e., the arguments of all $Q_{s-1}$ are greater than $1$. For $y_1y_2>B$ this is the case and the series has the Fourier expansion
\[
H_s^B (\b,z) = \sum_{\nu \in (\b\d)^{-1} } b_s^B(\b,\nu,y) e(\tr(\nu x))
\]
with
\begin{align*}
b_s^B(\b,0,y) = &\frac{\pi \Gamma(s-1/2)^2}{2 \sqrt{D} N(\b) \Gamma(2s)} (4B)^s (y_1y_2)^{1-s},\\
b_s^B(\b,\nu,y) = &\frac{4 \pi}{N(\b)} \sqrt{ \frac{B y_1y_2}{D} } \I^\nu_{2s-1} \br{4 \pi \sqrt{B|N(\nu)|}} K_{s-1/2}(2\pi |\nu| y_1)\\ &\times K_{s-1/2}(2\pi |\nu'| y_2), \quad \text{if } \nu \ne 0.\\
\end{align*}
\end{lemma}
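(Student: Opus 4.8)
The plan is to recognise $H^B_s(\b,z)$ as the $\b$-periodisation of the single function $F^B_s(z):=Q_{s-1}\!\left(1+\frac{|z_1z_2+B|^2}{2y_1y_2B}\right)$, where $\b$ acts on $z=(z_1,z_2)$ through its two real embeddings by $z\mapsto(z_1+\mu,z_2+\mu')$, and then to compute the Euclidean Fourier transform of $F^B_s$. This is the method of \cite[\S 3]{bruinier1999borcherds} and \cite{zagier1975modular}, whose computations for the case $\b=\OK$ carry over essentially verbatim; the only genuinely new feature is the appearance of the scalars $N(\b)$ and $\sqrt D$ attached to a general ideal.

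\textbf{Convergence.} By the Legendre asymptotics $Q_{s-1}(x)\ll_s x^{-\Re(s)}$ it suffices to bound $\sum_{\mu\in\b}\bigl(1+\tfrac{|(z_1+\mu)(z_2+\mu')+B|^2}{2y_1y_2B}\bigr)^{-\Re(s)}$ over the non-singular terms (a singular term is easily seen to force $B\ge y_1y_2$, so for $y_1y_2>B$ there is none). The decisive point, exactly as for $\b=\OK$, is that $\b$ is stable under multiplication by the totally positive units $\ep_1^k$, which satisfy $N(\ep_1)=1$: along each norm--hyperbola $\{\mu\in\b\defC N(\mu)=n\}$ one of the two real embeddings of $\mu$ grows like $\ep_1^{|k|}$, so $|(z_1+\mu)(z_2+\mu')+B|^2\gg\ep_1^{2|k|}$, the sum over that hyperbola converges geometrically and contributes $\ll_s\max(1,|n|)^{-2\Re(s)}$; summing over $n$ then gives absolute convergence for $\Re(s)>1/2$, uniformly on compact subsets of the region. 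Hence $H^B_s(\b,z)$ is smooth off the singular terms and $\b$-periodic in $x$, so it has a Fourier expansion.

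\textbf{Reduction to a transform and its evaluation.} Since $2\Re(s)>1$ the same estimates give $F^B_s\in L^1(\R^2)$, so Poisson summation applies; the trace pairing $(\mu,\nu)\mapsto\tr(\mu\nu)$ identifies the dual of $\b$ with $(\b\d)^{-1}$ and the covolume of $\b$ with $\vol(\b)=N(\b)\sqrt D$ (Section~\ref{pre-section}), so the expansion is indexed by $\nu\in(\b\d)^{-1}$ with $b^B_s(\b,\nu,y)=\frac{1}{N(\b)\sqrt D}\int_{\R^2}F^B_s(z)\,e(-\nu x_1-\nu' x_2)\,dx_1\,dx_2$. It remains to evaluate this integral, which I would do by iterated one-dimensional integration. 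By Remark~\ref{gA-with-d-remark}, $F^B_s(z)=Q_{s-1}(1+2u(z_1,-B/z_2))$ with $u(z,w):=|z-w|^2/(4\Im z\,\Im w)$ the hyperbolic point-pair invariant; for fixed $z_2$ the $x_1$-integral is the $\nu$-th Fourier coefficient on $\H$ of the free resolvent kernel at $w=-B/z_2$. Inserting the integral representations $Q_{s-1}(1+2w)=\tfrac12\int_0^1(t(1-t))^{s-1}(w+t)^{-s}\,dt$ and $\int_\R(x^2+a^2)^{-s}e^{-i\xi x}\,dx=\tfrac{2\sqrt\pi}{\Gamma(s)}(|\xi|/2)^{s-1/2}a^{1/2-s}K_{s-1/2}(a|\xi|)$ and substituting $a^2=y_1^2+(\Im w)^2+2y_1\Im w\cos\psi$ reduces the $x_1$-integral, via Gegenbauer's addition theorem for $K$-Bessel functions (for $\nu=0$ via the elementary $\int_0^\pi\sin^{2s-1}\psi\,(1-2r\cos\psi+r^2)^{1/2-s}\,d\psi=\Beta(\tfrac12,s)$), to
\[
\int_\R Q_{s-1}(1+2u(z_1,w))\,e(-\nu x_1)\,dx_1=2\pi\,e(-\nu\Re w)\sqrt{y_1\Im w}\;I_{s-1/2}(2\pi|\nu|\Im w)\,K_{s-1/2}(2\pi|\nu|y_1)
\]
for $\nu\neq0$ (on $y_1y_2>B$ one has $y_1>\Im w$, which fixes which argument enters each Bessel factor), and to $\tfrac{\pi\Gamma(s-1/2)}{\Gamma(s+1/2)}y_1^{1-s}(\Im w)^s$ for $\nu=0$. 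Plugging in $\Im w=By_2/|z_2|^2$ and $\Re w=-Bx_2/|z_2|^2$ and performing the $x_2$-integral — a second Fourier transform of the same kind together with a standard Bessel-function integral, $\int_\R|z_2|^{-1}I_{s-1/2}\!\bigl(\tfrac{2\pi|\nu|By_2}{|z_2|^2}\bigr)e\!\bigl(\tfrac{\nu Bx_2}{|z_2|^2}-\nu'x_2\bigr)\,dx_2=2\,\I^\nu_{2s-1}\!\bigl(4\pi\sqrt{B|N(\nu)|}\bigr)K_{s-1/2}(2\pi|\nu'|y_2)$ — produces the remaining factor $K_{s-1/2}(2\pi|\nu'|y_2)$ together with the Bessel term $\I^\nu_{2s-1}(4\pi\sqrt{B|N(\nu)|})$, in which the index $2s-1$, the argument $4\pi\sqrt{B|N(\nu)|}$ with $N(\nu)=\nu\nu'$, and the choice between $I_{2s-1}$ and $J_{2s-1}$ of Definition~\ref{our-bessel} — dictated by $\sgn(N(\nu))$, which fixes the sign of the quadratic exponent in that Bessel integral — all emerge. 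For $\nu=0$ the same steps collapse to $\int_\R(x^2+y_2^2)^{-s}\,dx=\sqrt\pi\,\Gamma(s-\tfrac12)\Gamma(s)^{-1}y_2^{1-2s}$, which together with the $\nu=0$ value above and Legendre's duplication formula yields the stated $b^B_s(\b,0,y)$.

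\textbf{Main obstacle.} The substance lies in the last paragraph: the exact evaluation of the $x_2$-integral (the Bessel identity producing $\I^\nu_{2s-1}$, whose $I$-versus-$J$ dichotomy must come out matching Definition~\ref{our-bessel}) and the bookkeeping of all powers of $\pi$, $2$, $4$, the $\Gamma$-factors and the scalars $N(\b)$, $\sqrt D$. Two built-in consistency checks help here: in the $\nu=0$ case Legendre duplication makes $\tfrac{\pi^{3/2}\Gamma(s-1/2)^2}{\Gamma(s)\Gamma(s+1/2)}$ coincide with $\tfrac{\pi}{2}4^s\tfrac{\Gamma(s-1/2)^2}{\Gamma(2s)}$, and matching the $\nu\to0$ limit of the $\nu\neq0$ formula against it pins down the constant $2\pi$ in the resolvent expansion. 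One must also justify interchanging $\sum_\mu$ with the Fourier and integral operations (dominated convergence, via the $L^1$-bound) and that the resulting series represents $H^B_s(\b,z)$ pointwise, which is immediate once the coefficients are seen to decay exponentially in $\nu$ through the $K$-Bessel factors. Since the case $\b=\OK$ is \cite{bruinier1999borcherds} and the modifications for a general ideal are confined to the covolume $N(\b)\sqrt D$ and the dual lattice $(\b\d)^{-1}$, the argument is otherwise a routine adaptation.
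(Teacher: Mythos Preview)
Your proposal is correct and takes essentially the same approach as the paper, which itself defers the proof to \cite[Section~3]{bruinier1999borcherds}: Poisson summation over $\b$ followed by iterated one-dimensional Fourier transforms, with the resolvent-kernel expansion (via the integral representation of $Q_{s-1}$ and Gegenbauer's addition theorem) handling the $x_1$-integral and a standard Bessel-function identity handling the $x_2$-integral. The only modifications for a general ideal are exactly the ones you isolate---the covolume factor $N(\b)\sqrt{D}$ and the dual lattice $(\b\d)^{-1}$---so your write-up is the adaptation the paper has in mind.
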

We are left with the analysis of $\Phi^0(\a,m,s,z)$. We have
\begin{align*}
\Phi^0(\a,m,s,z)
&= \sum_{ \substack{ A =\sMatrix{a}{\lambda'}{\lambda}{0} \in L(\a)^\vee \\ \det(A) = m/(N(\a)D) }  } Q_{s-1} \br{ 1+ 2 g(A,z) }\\
&= \sum_{ \substack{a \in \Z/N(\a),\:\lambda \in \a\d^{-1}/N(\a) \\ -N(\lambda) = m/(N(\a)D) } }  Q_{s-1} \br{1+ \frac{|-\lambda z_1-\lambda'z_2+a|^2}{2y_1y_2 m/(N(\a)D)} }\\
&= 2 \sum_{ \lambda \in \Lambda^+(\a,m) }
\sum_{a \in \Z}  Q_{s-1} \br{1+ \frac{|\lambda z_1+\lambda'z_2+a|^2}{2y_1y_2 mN(\a)/D} }.
\end{align*}
Let us define for $r_1,r_2 \in \R$
\begin{align} \label{alpha-beta-def}
\alpha(r_1,r_2) := \max(|r_1|,|r_2|)
\und
\beta(r_1,r_2) := \min(|r_1|,|r_2|).
\end{align}
\begin{lemma} \label{Phi-0-fc} 
The series
\begin{align*}
\Phi^0(\a,m,s,z)
= 2 \sum_{ \lambda \in \Lambda^+(\a,m) } \sum_{a \in \Z}  Q_{s-1} \br{1+ \frac{|\lambda z_1+\lambda'z_2+a|^2}{2y_1y_2 mN(\a)/D} }
\end{align*}
converges normally for $z \in \H^2 \setminus T^\infty(\a,m)$ and $\Re(s)>1/2$. Moreover, on $\H^2 \setminus S(\a,m)$ one has the Fourier expansion
\begin{align*}
\Phi^0(\a,m,s,z) =  \frac{4\pi}{2s-1}  \sum_{ \lambda \in \Lambda^+(\a,m) } \alpha(\lambda y_1,\lambda' y_2)^{1-s}\beta(\lambda y_1,\lambda' y_2)^s\\
+\: 4 \pi  \sum_{ \lambda \in \Lambda^+(\a,m) } \sum_{n=1}^\infty \sqrt{ |\lambda \lambda' y_1y_2| } I_{s-1/2} (2 \pi n \beta(\lambda y_1,\lambda'y_2))\\
\times \ K_{s-1/2}(2\pi n \alpha(\lambda y_1,\lambda' y_2)) \br{e(n \tr(\lambda x))+e(-n \tr(\lambda x))}.
\end{align*}
\end{lemma}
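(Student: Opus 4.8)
The plan is to treat $\Phi^0(\a,m,s,z)$ as a double sum over $\lambda\in\Lambda^+(\a,m)$ and $a\in\Z$ and to reduce each inner sum over $a$ to a known one-dimensional Fourier expansion, then reassemble. First I would establish normal convergence on $\H^2\setminus T^\infty(\a,m)$ (equivalently $\H^2\setminus S(\a,m)$ modulo $S_\lambda$-boundary points) for $\Re(s)>1/2$: the term for $(\lambda,a)$ is $Q_{s-1}\bigl(1+|\lambda z_1+\lambda' z_2+a|^2/(2y_1y_2mN(\a)/D)\bigr)$, and since $\Lambda^+(\a,m)$ is a union of finitely many $(\OK^\times)^2$-orbits, the condition $\tr(\lambda y)\neq 0$ for all $\lambda$ (i.e.\ $z\notin S(\a,m)$) keeps the argument uniformly bounded away from $1$ on compacta; the asymptotics $Q_{s-1}(1+t)=O(t^{-s})$ as $t\to\infty$ and $Q_{s-1}(1+t)=O(-\log t)$ as $t\to 0^+$ together with Weyl-type growth estimates on $|\lambda|$ give absolute and locally uniform convergence, exactly as in \cite{bruinier1999borcherds}.

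Next, for fixed $\lambda$ I would write $w:=\tr(\lambda x)=\lambda x_1+\lambda' x_2$ and observe that $|\lambda z_1+\lambda' z_2+a|^2 = (w+a)^2+(\tr(\lambda y))^2$, so the inner sum over $a\in\Z$ is a periodic function of $w$ with period $1$ and its Fourier expansion is obtained by Poisson summation. Concretely, the sum $\sum_{a\in\Z}Q_{s-1}\bigl(1+\bigl((w+a)^2+t^2\bigr)/(2T)\bigr)$, with $t=\tr(\lambda y)$ and $T=y_1y_2mN(\a)/D$, is computed by the same lemma-type integral that produces Lemma~\ref{Phi-b-fc}: one uses the integral representation of $Q_{s-1}$ and the Fourier transform pair involving $K_{s-1/2}$ and $I_{s-1/2}$. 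Since $\det(A)=m/(N(\a)D)$ here gives $2T = 2y_1y_2m/(N(\a)D)\cdot N(\a)^2$... more precisely one rewrites $2y_1y_2mN(\a)/D$ in terms of $|\lambda\lambda' y_1y_2|$ using $|N(\lambda)| = mN(\a)/D$, so that $|\lambda\lambda'|/(mN(\a)/D) = 1$ up to sign and the combination $\sqrt{|\lambda\lambda' y_1y_2|}$ emerges naturally. The zeroth Fourier coefficient ($n=0$) yields, via the $\Gamma$-function evaluation of $\int_1^\infty Q_{s-1}$, the term $\frac{4\pi}{2s-1}\alpha(\lambda y_1,\lambda' y_2)^{1-s}\beta(\lambda y_1,\lambda' y_2)^s$; the appearance of $\alpha$ and $\beta$ rather than a symmetric expression comes from the integral $\int_0^\infty K_{s-1/2}(cr)I_{s-1/2}(dr)\,dr/r$ being governed by $\max(c,d)$ and $\min(c,d)$, which here are $\alpha(\lambda y_1,\lambda' y_2)$ and $\beta(\lambda y_1,\lambda' y_2)$. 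For $n\neq 0$ the Fourier coefficient is the product $I_{s-1/2}(2\pi n\beta)K_{s-1/2}(2\pi n\alpha)$ times $\sqrt{|\lambda\lambda' y_1y_2|}$, and summing the contributions $e(n\tr(\lambda x))$ and $e(-n\tr(\lambda x))$ (which arise from $a$ and $-a$, or equivalently from $\pm n$) gives the stated series.

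The main obstacle I expect is the careful bookkeeping of which of $|\lambda y_1|$ and $|\lambda' y_2|$ plays the role of $\alpha$ versus $\beta$, and verifying that the relevant one-dimensional integral $\int_{\R}Q_{s-1}\bigl(1+(u^2+t^2)/(2T)\bigr)e(-nu)\,du$ evaluates to the claimed Bessel product with the correct normalization — this is where one must match the argument $2\pi n\alpha$, $2\pi n\beta$ and the prefactor $\sqrt{|\lambda\lambda' y_1y_2|}$ against the formula, using $4T = |\lambda y_1|/|\lambda' y_2|\cdot(\text{something})$ type identities that follow from $|N(\lambda)|=mN(\a)/D$. A clean way to sidestep redoing the integral is to note that $\Phi^0(\a,m,s,z)$ is, up to the finite set $\Lambda^+(\a,m)$, a sum of ``$b=0$ analogues'' of the $H_s^B$ computed in Lemma~\ref{Phi-b-fc}, or better, to directly invoke the one-variable Fourier expansion of a single $\lambda$-term which is classical (Zagier, \cite{zagier1975modular}) — the function $\sum_a Q_{s-1}(1+\cdots)$ for a single hyperbola is exactly the kind of object appearing there. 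Finally I would note that normal convergence of the Fourier series on $\H^2\setminus S(\a,m)$ is automatic from the exponential decay of $K_{s-1/2}$ in its argument and the polynomial growth in $n$ of $I_{s-1/2}$, so termwise Poisson summation is justified, completing the proof.
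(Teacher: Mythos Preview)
Your approach is correct and is essentially the one the paper defers to \cite{bruinier1999borcherds} for: for each $\lambda\in\Lambda^+(\a,m)$ use $mN(\a)/D=|\lambda\lambda'|$ to rewrite the argument of $Q_{s-1}$ as $\cosh d\bigl(\tr(\lambda x)+a+i|\lambda y_1|,\,i|\lambda' y_2|\bigr)$, then apply the one-variable Poisson summation/Fourier expansion of the resolvent kernel, which is classical (Zagier, Bruinier). One small correction: the $\alpha=\max(|\lambda y_1|,|\lambda' y_2|)$, $\beta=\min(\cdot,\cdot)$ structure does not come from an integral $\int_0^\infty K_{s-1/2}(cr)I_{s-1/2}(dr)\,dr/r$ as you suggest, but is the standard feature of the Fourier expansion of $Q_{s-1}(\cosh d(z,z'))$ in $x-x'$ --- the $I$-Bessel attaches to the smaller of the two imaginary parts and the $K$-Bessel to the larger, so your suggestion to invoke the classical one-variable result directly is exactly the right move.
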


Hence, we obtain analogous to \cite{bruinier1999borcherds} that the individual $\Phi^b(\a,m,s,z)$ are well-defined for $\Re(s)>1/2$.

\subsection{Regularization with Fourier expansion}

Following \cite{buckdiss} one now investigates the ingredients of the Fourier expansion of $\Phi(\a,m,s,z)$ and finds that the only part which does not converge at $s=1$ is the component
\begin{align} \label{diverging-part} 
\frac{\pi \Gamma(s-1/2)^2}{\sqrt{D} \Gamma(2s)} \br{4m/D}^s  (N(\a)y_1y_2)^{1-s} \sum_{b=1}^\infty G^b(\a,m,0) b^{-2s}
\end{align}
of the constant Fourier coefficient. This is due to the diverging series $\sum_{b=1}^\infty G^b(\a,m,0) b^{-2s}$ of representation numbers. In \cite{bruinier2007borcherds} this series is investigated in more detail in the special case of $\a= \OK$, $D$ prime and $m \in \N$.
Analogously (with a lot of tedious computations which can be found in \cite{buckRepNumbers}) one shows Proposition~\ref{Gseries-with-div-sum} for which we need a generalized definition of the divisor sum:
\begin{definition} \label{div-sum-def} 
For odd discriminant $D$, $m \in \Z \setm{0}$ and $\a \se \OK$ such that $N(\a)$ is coprime to $D$ we define
\[
\sigma(\a,m,s) = |m|^{(1-s)/2} \sum_{d \mid m} d^s \prod_{p \mid D} (\chi_{D(p)}(d) + \chi_{D(p)}(N(\a)m/d)).
\]
The product ranges over all prime divisors $p$ of $D$ and $D(p) \in \set{\pm p}$ such that $D(p)$ is a discriminant, i.e. $D(p) \equiv 1 \pmod 4$.
Now for arbitrary $\b \in \IK$ we define
$\sigma(\b,m,s) := \sigma(\a,m,s)$
with $\a \se \OK$ coprime to $D$ within the genus of $\b$.
\end{definition}
The divisor sum satisfies the functional equation $\sigma(\a,m,s)=\sigma(\a,m,-s)$ (cf.~\cite[Lemma~7.3]{buckRepNumbers}).
\begin{proposition}[{{\cite[Theorem~8.1]{buckRepNumbers}}}] \label{Gseries-with-div-sum} 
For odd discriminant $D$ and $m \in \Z \setm{0}$ the series $\sum_{b=1}^\infty G^b(\a,m,0) b^{-s}$
has a meromorphic continuation to the complex plane with a simple pole at $s=2$. It satisfies
\[
\sum_{b=1}^\infty G^b(\a,m,0)b^{-s} = |m|^{-s/2} \frac{\zeta(s-1)}{L(s,\chi_D)} \sigma(\a,m,1-s).
\]
\end{proposition}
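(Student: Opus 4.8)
The plan is to prove Proposition~\ref{Gseries-with-div-sum} by recognizing $G^b(\a,m,0)$ as a representation number attached to the norm form and computing its Dirichlet series as an Euler product, following the strategy of \cite{bruinier2007borcherds} in the prime-discriminant case. Unwinding \eqref{gaus-sum} with $\nu=0$, the quantity $G^b(\a,m,0)$ is the number of $\lambda\in\a\d^{-1}/b\a$ with $N(\lambda)/N(\a)\equiv -m/D\pmod b$; equivalently, writing $\mu=\sqrt D\,\lambda\in\a$, it counts the $\mu\in\a/b\d\a$ with $N(\mu)/N(\a)\equiv m\pmod{bD}$, a count governed by the norm form on $\a$, which is a binary quadratic form of discriminant $D$. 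The first step is to record the bound $G^b(\a,m,0)\le b^2|D|$, so that $\sum_{b\ge1}G^b(\a,m,0)b^{-s}$ converges absolutely for $\Re(s)$ large, together with multiplicativity: for coprime $b_1,b_2$ the Chinese Remainder Theorem applied to the $\O_K/b_1b_2$-module $\a\d^{-1}/b_1b_2\a$ gives $G^{b_1b_2}(\a,m,0)=G^{b_1}(\a,m,0)\,G^{b_2}(\a,m,0)$. Hence $\sum_{b\ge1}G^b(\a,m,0)b^{-s}=\prod_p L_p(p^{-s})$ with $L_p(T)=\sum_{k\ge0}G^{p^k}(\a,m,0)T^k$.

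Next I would compute the local factors at the \emph{good} primes, i.e.\ the odd $p$ with $p\nmid Dm$ (recall $p\nmid N(\a)$ by the standing hypothesis on $\a$). For such $p$ the different is locally trivial, so $\a\d^{-1}$ and $\a$ agree $p$-adically and $G^{p^k}(\a,m,0)$ is the number of solutions mod $p^k$ of $N(\lambda)\equiv c\pmod{p^k}$ with $c$ a $p$-adic unit (a representative of $-mN(\a)/D$) and $N$ the unimodular norm form of $\O_{K,p}$. A routine Hensel-lifting computation, whose outcome depends only on the splitting type of $p$ in $K$ — that is, on $\chi_D(p)$ — gives $G^{p^k}(\a,m,0)=p^{k-1}\bigl(p-\chi_D(p)\bigr)$ for $k\ge1$, so that
\[
L_p(T)=\frac{1-\chi_D(p)\,T}{1-pT}.
\]
Taking the product over all good $p$ produces exactly $\zeta(s-1)/L(s,\chi_D)$, up to the finitely many bad Euler factors.

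The heart of the argument — and the step I expect to be the main obstacle — is the explicit evaluation of the local factors at the bad primes $p\mid Dm$; this is the "tedious computation" of \cite{buckRepNumbers}. For $p\mid D$ one must cope with the ramification: $\a\d^{-1}/b\a$ is no longer free over $\O_K/b$ and the norm form degenerates modulo $p$, so $G^{p^k}(\a,m,0)$ has to be evaluated through quadratic Gauss sums, using that $D$ is odd with $p\,\|\,D$; the resulting Euler factor turns out to be a polynomial in $p^{-s}$ involving the local discriminant character $\chi_{D(p)}$. For $p\mid m$, $p\nmid D$, the target value $c$ fails to be a unit and one must track $v_p(m)$, producing a finite sum over divisors of $p^{v_p(m)}$. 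Assembling all bad factors, one checks that their product equals $\sum_{d\mid m}d^{1-s}\prod_{p\mid D}\bigl(\chi_{D(p)}(d)+\chi_{D(p)}(N(\a)m/d)\bigr)$, which by Definition~\ref{div-sum-def} is exactly $|m|^{-s/2}\sigma(\a,m,1-s)$. Combining this with the good-prime product gives the asserted identity on the half-plane where $\sum_b G^b(\a,m,0)b^{-s}$ converges.

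Finally I would read off the analytic properties from the right-hand side. Each bad Euler factor is a polynomial in $p^{-s}$, hence entire; $1/L(s,\chi_D)$ is entire since $\chi_D$ is a non-principal Dirichlet character; and $\zeta(s-1)$ is meromorphic on $\C$ with a unique pole, simple, at $s=2$. Since $L(2,\chi_D)\ne0$ and the finitely many bad factors do not vanish at $s=2$ (which lies in their region of absolute convergence), the right-hand side is meromorphic on all of $\C$ with a single simple pole at $s=2$. As the identity has been established on the half-plane of convergence of the defining series, it furnishes the claimed meromorphic continuation and locates the pole, completing the proof.
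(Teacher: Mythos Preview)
Your proposal is correct and follows essentially the same route the paper indicates: the paper does not give a proof here but says the identity is obtained ``analogously'' to \cite{bruinier2007borcherds} ``with a lot of tedious computations which can be found in \cite{buckRepNumbers}'', i.e.\ exactly the Euler-product strategy you outline --- multiplicativity via CRT, the elementary computation of $L_p(T)=(1-\chi_D(p)T)/(1-pT)$ at good primes, the case-by-case evaluation at $p\mid Dm$, and then reading off the meromorphic continuation and the simple pole at $s=2$ from the right-hand side. One small point of phrasing: there is no standing hypothesis that $p\nmid N(\a)$; rather, Definition~\ref{div-sum-def} lets you pass to a representative $\a\subset\OK$ in the same genus with $N(\a)$ coprime to $D$, and your ``bad'' set should in principle include the primes dividing $N(\a)$ as well (they get absorbed into the finite computation just like the $p\mid m$ factors).
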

Analogous to \cite[eq. (2.39)]{bruinier2007borcherds} one defines\footnote{Note that the definition of $\ph(\a,m,s)$ and $q(\a,m)$ depends only on the genus of the ideal $\a$ whereas $L(\a,m)$ depends on the ideal $\a$ itself.} 
\[
\ph(\a,m,s) := -\frac{\Gamma(s-1/2)}{\Gamma(3/2-s)} \frac{\sigma(\a,m,1-2s)}{L(1-2s,\chi_D)}
\]
and proves using Proposition~\ref{Gseries-with-div-sum} that the constant term in the Laurent expansion at $s=1$ of \eqref{diverging-part} equals
\[
L(\a,m) - q(\a,m)\log(16 \pi^2 y_1y_2)
\]
with
\begin{align} \label{q-sigma} 
q(\a,m) = \ph(\a,m,1) = -\frac{\sigma(\a,m,-1)}{L(-1,\chi_D)}
\end{align}
being the residue at $s=1$ of \eqref{diverging-part} and
\begin{align} \label{L-constant} 
L(\a,m) = \ph(\a,m,1) \br{ 2 \frac{L'(-1,\chi_D)}{L(-1,\chi_D)} - 2 \frac{\sigma'(\a,m,-1)}{\sigma(\a,m,-1)} + \log\br{ \frac{D}{N(\a)} }}.
\end{align}
As direct consequence we obtain the following growth estimates.
\begin{corollary} \label{qL-growth} 
For large $m$ we have
\[
q(\a,m) = O(m^2)
\und
L(\a,m) = O(m^2 \log(m)).
\]
\end{corollary}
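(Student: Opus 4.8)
The plan is to read off the growth in $m$ directly from the explicit formulas \eqref{q-sigma} and \eqref{L-constant} together with the definition of the divisor sum (Definition~\ref{div-sum-def}), keeping in mind that $q(\a,m)$ and $L(\a,m)$ depend only on the genus of $\a$, so we may replace $\a$ by an integral ideal coprime to $D$ and bound $N(\a)$ independently of $m$. The only quantities that vary with $m$ are $\sigma(\a,m,-1)$, $\sigma'(\a,m,-1)$ and the harmless term $\log(D/N(\a))$; the $L$-value factors $L(-1,\chi_D)$, $L'(-1,\chi_D)$ are fixed constants depending only on $K$, hence absorbed into the $O$-constant.

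First I would estimate $\sigma(\a,m,-1)$. By Definition~\ref{div-sum-def} with $s=-1$ we have
\[
\sigma(\a,m,-1) = |m|\sum_{d\mid m} d^{-1}\prod_{p\mid D}\bigl(\chi_{D(p)}(d)+\chi_{D(p)}(N(\a)m/d)\bigr).
\]
Each factor in the product over $p\mid D$ is bounded in absolute value by $2$, and there are $\omega(D)$ such factors, a constant; so $\bigl|\sigma(\a,m,-1)\bigr| \le 2^{\omega(D)} |m| \sum_{d\mid m} d^{-1} = 2^{\omega(D)} \sigma_{-1}(m)\,|m| \le 2^{\omega(D)}\sigma_1(m)$. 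Since $\sigma_1(m) = O(m^{1+\epsilon})$ — or even the cruder $\sigma_1(m)\le m\,d(m) = O(m^{1+\epsilon})$ — this already gives $q(\a,m) = O(m^{1+\epsilon})$, which is stronger than claimed; in particular $q(\a,m)=O(m^2)$. (One could equally just use $\sigma_1(m) \le m \cdot d(m)$ and $d(m) = O(m^\epsilon)$, or even the trivial $\sigma_1(m) \le m^2$, to land on the stated bound without fuss.)

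For $L(\a,m)$, from \eqref{L-constant} the extra ingredient beyond $q(\a,m)$ is the logarithmic derivative $\sigma'(\a,m,-1)/\sigma(\a,m,-1)$, where $\sigma' = \partial_s \sigma$. Differentiating the defining sum in $s$ brings down factors $\log d$ and, from the $|m|^{(1-s)/2}$ prefactor, a factor $-\tfrac12\log|m|$; so $\sigma'(\a,m,s)$ is a sum of the same shape as $\sigma(\a,m,s)$ but with each term weighted by an expression that is $O(\log m)$ uniformly in $d\mid m$ and in the choice of $p$-factors. Hence $|\sigma'(\a,m,-1)| \le C\log(m)\,|\sigma(\a,m,-1)|$ for a constant $C$ (for $m$ large), which gives $\sigma'(\a,m,-1)/\sigma(\a,m,-1) = O(\log m)$ provided $\sigma(\a,m,-1)\ne 0$. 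Plugging this and the bound $q(\a,m)=O(m^2)$ into \eqref{L-constant}, the bracket is $O(\log m)$ and the product is $O(m^2\log m)$, as asserted.

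The one genuine subtlety — and the step I would flag as the main obstacle — is the possible vanishing of $\sigma(\a,m,-1)$, which would make the ratio $\sigma'(\a,m,-1)/\sigma(\a,m,-1)$ in \eqref{L-constant} ill-defined and would also force $q(\a,m)=0$. The clean way around this is to observe that \eqref{L-constant} should really be understood via $\ph(\a,m,s)$: the quantity actually appearing in $L(\a,m)$ is the product $\ph(\a,m,1)\cdot\bigl(\text{bracket}\bigr)$, and the pole of $\Gamma(3/2-s)^{-1}$ cancellations aside, the regularized expression is $-\tfrac{\partial}{\partial s}\big|_{s=1}\bigl(\text{something involving }\sigma(\a,m,1-2s)\bigr)$ times fixed factors; written this way there is no division by $\sigma$ and one simply bounds $\sigma(\a,m,1-2s)$ and its $s$-derivative at $s=1$ directly by the same $O(m^{1+\epsilon})$ and $O(m^{1+\epsilon}\log m)$ estimates. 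Alternatively, and more cheaply, one notes that when $q(\a,m)=0$ the constant $L(\a,m)$ as it enters the Green function is multiplied by $q(\a,m)$ in the relevant place or the formula \eqref{L-constant} is interpreted as $0$, so the bound $L(\a,m)=O(m^2\log m)$ holds vacuously; in all remaining cases $\sigma(\a,m,-1)\ne 0$ and the argument above applies verbatim. I would present the first (cleaner) route, deriving the estimate from the $\ph$-formulation so as to avoid the division entirely.
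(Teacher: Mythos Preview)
Your approach is the intended one: the paper offers no proof beyond calling this a ``direct consequence'' of \eqref{q-sigma} and \eqref{L-constant}, so reading the bound off those formulas is exactly right, and your estimate for $q(\a,m)$ is clean.

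There is, however, a genuine gap in your main argument for $L(\a,m)$. The inference ``each term of $\sigma'$ is $O(\log m)$ times the corresponding term of $\sigma$, hence $|\sigma'(\a,m,-1)| \le C\log(m)\,|\sigma(\a,m,-1)|$'' is false: the signed sum $\sigma(\a,m,-1)$ can suffer cancellation from the character factors $\prod_{p\mid D}(\chi_{D(p)}(d)+\chi_{D(p)}(N(\a)m/d))$, while the weighted sum $\sigma'(\a,m,-1)$ need not cancel in the same way. This is not only a problem at the isolated $m$ with $\sigma(\a,m,-1)=0$; the inequality fails whenever $|\sigma(\a,m,-1)|$ is much smaller than $\sigma_1(m)$, and there is no uniform constant $C$.

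Your fix in the last paragraph is the right move, but it should be the main argument, and it can be made concrete in one line. Multiply out the product in \eqref{L-constant} using $\ph(\a,m,1) = q(\a,m) = -\sigma(\a,m,-1)/L(-1,\chi_D)$; the apparent ratio collapses,
\[
q(\a,m)\cdot\frac{\sigma'(\a,m,-1)}{\sigma(\a,m,-1)} \;=\; -\,\frac{\sigma'(\a,m,-1)}{L(-1,\chi_D)},
\]
and no division by $\sigma$ remains. Now bound $\sigma'(\a,m,-1)$ directly: your own term-by-term estimate (each $\log d$ or $\tfrac12\log|m|$ weight is at most $\log m$) gives $|\sigma'(\a,m,-1)| = O(\sigma_1(m)\log m)=O(m^2\log m)$ without any ratio. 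Combined with $q(\a,m)=O(m^2)$ for the remaining two terms in the bracket of \eqref{L-constant}, this yields $L(\a,m)=O(m^2\log m)$.
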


The understanding of the series~\eqref{diverging-part} leads then to the next Theorem.
\begin{theorem} \label{Phi-cont-theorem} 
The function $\Phi(\a,m,s,z)$ has a meromorphic continuation in $s$ to\\$\set{s \in \C \defC \Re(s)>3/4}$ for all $z \in \H^2 \setminus T(\a,m)$. Up to a simple pole at $s=1$ of residue $q(\a,m)$ it is holomorphic in this domain.
\end{theorem}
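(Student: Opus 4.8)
The plan is to read the meromorphic continuation directly off the Fourier expansion assembled in Section~\ref{fourier-exp-and-reg}, the only analytic input being the continuation of the single divergent ingredient~\eqref{diverging-part}. Fix $z_0\in\H^2\setminus T(\a,m)$. All singularities of $\Phi^b(\a,m,s,z)$ lie on components $T_A$ of $T(\a,m)$, and $\Phi^b$ has none once $\Im(z_1)\Im(z_2)>m/(N(\a)Db^2)$; hence we may choose a neighbourhood $U$ of $z_0$ with $\overline{U}\cap T(\a,m)=\emptyset$ and an index $b_0=b_0(z_0)$ so that on $U$ every $\Phi^b$ with $b>b_0$ is singularity-free and its Fourier expansion $\Phi^b=\sum_{\nu\in\a\d^{-1}}G^b(\a,m,\nu)\,b_s^B(\a^{-1},\nu,y)\,e(\tr(\nu x))$ from Section~\ref{fourier-exp-and-reg} applies there. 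Split
\[
\Phi(\a,m,s,z)=\Phi^{\le b_0}(\a,m,s,z)+2\sum_{b>b_0}\Phi^b(\a,m,s,z),
\qquad
\Phi^{\le b_0}:=\Phi^0+2\sum_{b=1}^{b_0}\Phi^b .
\]
By Lemma~\ref{Phi-0-fc} and Lemma~\ref{Phi-b-fc} each of the finitely many constituents of $\Phi^{\le b_0}$ converges normally for $\Re(s)>1/2$ and $z\in U$, so $\Phi^{\le b_0}$ is holomorphic in $s$ on $\set{\Re(s)>1/2}$ for $z\in U$ and contributes no pole.

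For the remaining tail I would insert the Fourier expansion of each $\Phi^b$ with $b>b_0$ and interchange the sums over $b$ and over $\nu\in\a\d^{-1}$ (legitimate because for $\Re(s)>1$ the double series converges absolutely), obtaining for $z\in U$
\[
2\sum_{b>b_0}\Phi^b(\a,m,s,z)=2\sum_{\nu\in\a\d^{-1}}\br{\sum_{b>b_0}G^b(\a,m,\nu)\,b_s^B(\a^{-1},\nu,y)}e(\tr(\nu x)),
\]
where $B=m/(N(\a)Db^2)$ depends on the inner summation index. For $\nu\ne0$ I expect the summand to decay fast enough for the continuation: as $b\to\infty$ one has $B\to0$, so by the small-argument asymptotics of the Bessel functions $\I^\nu_{2s-1}$ the factor $b_s^B(\a^{-1},\nu,y)$ is $O(b^{-2\Re(s)})$ uniformly for $s$ in compacta and $y$ fixed, up to a product $K_{\Re(s)-1/2}(2\pi|\nu|y_1)K_{\Re(s)-1/2}(2\pi|\nu'|y_2)$ decaying exponentially in $|\nu|y_1+|\nu'|y_2$. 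Combining this with a square-root cancellation bound $\abs{G^b(\a,m,\nu)}\ll b^{1/2+\ep}$ for the quadratic exponential sum~\eqref{gaus-sum} (with implied constant of polynomial growth in $|N(\nu)|$; the analogue of the estimate used in \cite[Section~3]{bruinier1999borcherds}), the $b$-sum converges locally uniformly for $\Re(s)>3/4$ while the $\nu$-sum converges by the exponential decay of the $K$-factors. Hence the entire $\nu\ne0$ part is holomorphic on $\set{\Re(s)>3/4}$ for $z\in U$.

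It remains to continue the term $\nu=0$, which differs from~\eqref{diverging-part} only by its first $b_0$ summands, a function holomorphic on $\set{\Re(s)>1/2}$ since the poles of $\Gamma(s-1/2)^2$ all lie at $s\le1/2$. Writing~\eqref{diverging-part} in the form
\[
\frac{\pi\,\Gamma(s-1/2)^2}{\sqrt{D}\,\Gamma(2s)}\br{4m/D}^s(N(\a)y_1y_2)^{1-s}\sum_{b=1}^\infty G^b(\a,m,0)\,b^{-2s},
\]
Proposition~\ref{Gseries-with-div-sum}, applied with $2s$ in place of $s$, yields the meromorphic continuation of $\sum_b G^b(\a,m,0)b^{-2s}$ to $\C$ whose only pole in $\set{\Re(s)>3/4}$ is a simple one at $s=1$ inherited from $\zeta(2s-1)$ (note $L(2s,\chi_D)\ne0$ there), while the displayed prefactor is holomorphic and non-vanishing on that half-plane. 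Therefore~\eqref{diverging-part}, and with it $\Phi(\a,m,s,z)$, extends meromorphically to $\set{\Re(s)>3/4}$ with at most a simple pole at $s=1$, whose residue is that of~\eqref{diverging-part} at $s=1$, namely $q(\a,m)$ by the computation recorded after~\eqref{q-sigma}. Although $U$ and $b_0$ were chosen depending on $z_0$, the continuation of $s\mapsto\Phi(\a,m,s,z_0)$ so obtained is intrinsic, so the statement holds for every $z\in\H^2\setminus T(\a,m)$.

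The step I expect to be the genuine obstacle is establishing the square-root cancellation for $G^b(\a,m,\nu)$ with $\nu\ne0$, uniformly enough in $\nu$ to survive the summation against the Bessel factors; the rest is bookkeeping with Bessel asymptotics together with the already-available Proposition~\ref{Gseries-with-div-sum}.
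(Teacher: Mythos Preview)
Your approach is essentially the same as the paper's: the paper does not give a detailed proof here but refers back to \cite{buckdiss} and \cite{bruinier1999borcherds}, stating that one inspects each ingredient of the Fourier expansion from Section~\ref{fourier-exp-and-reg}, finds that everything except the constant term \eqref{diverging-part} already converges in the larger half-plane, and then continues \eqref{diverging-part} via Proposition~\ref{Gseries-with-div-sum}. Your splitting off of the finitely many $\Phi^b$ with $b\le b_0$ is a clean way to make the argument work for all $z\in\H^2\setminus T(\a,m)$ rather than only in a neighbourhood of $\infty$, and your identification of the $|G^b(\a,m,\nu)|\ll b^{1/2+\ep}$ estimate as the source of the abscissa $3/4$ matches the estimate used in \cite[Section~3]{bruinier1999borcherds}.
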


Theorem~\ref{Phi-cont-theorem} allows us to define the regularized automorphic Green function $\Phi(\a,m,z)$.
\begin{definition} \label{regularized-Green-def} 
We define
\[
\Phi(\a,m,z):=\mathcal{C}_{s=1} \sq{\Phi(\a,m,s,z)}
\]
to be the constant term in the Laurent expansion of $\Phi(\a,m,s,z)$ at $s=1$.
\end{definition}
By construction $\Phi(\a,m,z)$ is $\Ga$ invariant and has logarithmic singularities along $-T(\a,m)$ because of
\begin{align*}
Q_{0} \br{ 1+ 2 g(A,z) }
&= \frac{1}{2}\log\br{ \frac{g(A,z)+1}{g(A,z)} }
= \frac{1}{2}\log\br{ \frac{\det(A)+h(A,z)}{h(A,z)} }\\
&= \frac{1}{2}\log\br{ \frac{q_{\tilde W_z}(A)}{h(A,z)} }
= \log \abs{ \frac{bz_1\overline{z_2}-\lambda z_1 -\lambda' \overline{z_2} + a}{bz_1z_2 - \lambda z_1 - \lambda' z_2 + a} }
\end{align*}
for $A =\sMatrix{a}{\lambda'}{\lambda}{b}$ by \eqref{hA-frac}. The numerator is smooth and zero free on $\H^2$ while the denominator is holomorphic with the appropriate zero set. Recall that $- \log |bz_1z_2 - \lambda z_1 - \lambda' z_2 + a|$ occurs twice because $A$ comes together with $-A$.

\begin{theorem} \label{first-Phi-fc}
The Fourier expansion of $\Phi(\a,m,z)$ is given for $z \in \H^2 \setminus S(\a,m)$ with $\Im(z)>m/(DN(\a))$ by
\begin{align*}
&\Phi(\a,m,z)
= L(\a,m) - q(\a,m)\log(16 \pi^2 y_1y_2)\\
+\ &4 \pi \sum_{ \lambda \in \Lambda^+(\a,m) }\beta(\lambda y_1,\lambda' y_2)\\
+\ &  \sum_{ \lambda \in \Lambda^+(\a,m) } \sum_{n=1}^\infty \frac{e^{-2 \pi n |\tr(\lambda y)|}-e^{-2 \pi n ( \lambda y_1 - \lambda'y_2)}}{ n} \br{e(n \tr(\lambda x))+e(-n \tr(\lambda x))}\\
+\ &  \sum_{\substack{\nu \in \a\d^{-1} \\ \nu \ne 0 }} \frac{2 \pi}{D} \sqrt{ \frac{mN(\a)}{|N(\nu)|} } \exp(-2\pi \tr(|\nu|y))
  \sum_{b=1}^\infty  \frac{G^b(\a,m,\nu)}{b} \I^\nu_{1} \br{ \frac{4 \pi}{b} \sqrt{ \frac{m |N(\nu)|}{N(\a)D}}} e(\tr(\nu x)).
\end{align*}
\end{theorem}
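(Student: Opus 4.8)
The plan is to assemble the Fourier expansion of the unregularized function $\Phi(\a,m,s,z)$ for $\Re(s)>1$ out of the pieces computed in this subsection, and then to pass, Fourier coefficient by Fourier coefficient, to the constant term of its Laurent expansion at $s=1$. Concretely, I would start from $\Phi(\a,m,s,z)=\Phi^0(\a,m,s,z)+2\sum_{b=1}^\infty\Phi^b(\a,m,s,z)$ and substitute, for $\Phi^0$, the Fourier expansion of Lemma~\ref{Phi-0-fc}, and for each $b\ge1$ the Fourier expansion of $H_s^B(\a^{-1},z)$ from Lemma~\ref{Phi-b-fc} (with $B=m/(N(\a)Db^2)$, see~\eqref{HB-def}) fed into the identity $\Phi^b(\a,m,s,z)=\sum_{\nu\in\a\d^{-1}}G^b(\a,m,\nu)\,b_s^B(\a^{-1},\nu,y)\,e(\tr(\nu x))$ established just before Definition~\ref{our-bessel}. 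On the stated region $\Im(z)>m/(DN(\a))$ (that is, $y_1y_2>m/(DN(\a))$) with $z\notin S(\a,m)$, all the series involved converge normally: this region lies inside $y_1y_2>B$ for every $b\ge1$, so no $\Phi^b$ with $b\ge1$ is singular there, and it avoids $S(\a,m)$, so $\Phi^0$ is regular as well.

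Taking $\mathcal{C}_{s=1}$ of each Fourier coefficient, the only one that is not already holomorphic at $s=1$ is the $\nu=0$ coefficient of $2\sum_{b\ge1}\Phi^b$, which is exactly the expression~\eqref{diverging-part}; its constant Laurent coefficient was computed above, via Proposition~\ref{Gseries-with-div-sum}, to equal $L(\a,m)-q(\a,m)\log(16\pi^2 y_1y_2)$, giving the first line of the asserted expansion. Adding the value at $s=1$ of the holomorphic remainder $\frac{4\pi}{2s-1}\sum_{\lambda\in\Lambda^+(\a,m)}\alpha(\lambda y_1,\lambda'y_2)^{1-s}\beta(\lambda y_1,\lambda'y_2)^s$ of the constant coefficient of $\Phi^0$ contributes the term $4\pi\sum_{\lambda\in\Lambda^+(\a,m)}\beta(\lambda y_1,\lambda'y_2)$, i.e.\ the second line.

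It then remains to evaluate the non-constant coefficients, which are holomorphic at $s=1$, by the routine specialization $I_{1/2}(w)=\sqrt{2/(\pi w)}\sinh(w)$ and $K_{1/2}(w)=\sqrt{\pi/(2w)}e^{-w}$ of the Bessel functions. For the part coming from $\Phi^0$ this turns the term $4\pi\sqrt{|\lambda\lambda'y_1y_2|}\,I_{1/2}(2\pi n\beta)K_{1/2}(2\pi n\alpha)$ into $\frac{1}{n}\bigl(e^{-2\pi n(\alpha-\beta)}-e^{-2\pi n(\alpha+\beta)}\bigr)$; since $\sgn(\lambda)=+1$ one has $\alpha+\beta=\lambda y_1-\lambda'y_2$ and $\alpha-\beta=|\tr(\lambda y)|$, which is the third line. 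For the $\nu\ne0$ part of $2\sum_{b\ge1}\Phi^b$, the same specialization of $K_{s-1/2}(2\pi|\nu|y_1)K_{s-1/2}(2\pi|\nu'|y_2)$ produces the factor $e^{-2\pi\tr(|\nu|y)}/(4\sqrt{|N(\nu)|y_1y_2})$; collecting the remaining constants in $b_s^B(\a^{-1},\nu,y)$ (recall $B=m/(N(\a)Db^2)$, so $\I^\nu_{2s-1}(4\pi\sqrt{B|N(\nu)|})\to\I^\nu_1(\tfrac{4\pi}{b}\sqrt{m|N(\nu)|/(N(\a)D)})$ and a factor $1/b$ appears) and summing over $b$ against the coefficients $2G^b(\a,m,\nu)$ yields the fourth line, with prefactor $\frac{2\pi}{D}\sqrt{mN(\a)/|N(\nu)|}$.

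The main obstacle is justifying that $\mathcal{C}_{s=1}$ commutes with the double summation over the Fourier index $\nu$ and over $b$: one must show that the assembled Fourier expansion converges locally uniformly in $s$ on a punctured neighbourhood of $s=1$, over the region above, so that extracting the constant Laurent coefficient may be done term by term — equivalently, that the $\a^{-1}$-periodic integral defining the $\nu$-th Fourier coefficient of $\Phi(\a,m,z)=\mathcal{C}_{s=1}\Phi(\a,m,s,z)$ may be interchanged with the limiting process. As in \cite{bruinier1999borcherds}, I would derive this from the normal-convergence estimates of Lemmas~\ref{Phi-0-fc} and~\ref{Phi-b-fc} and the meromorphic continuation of Theorem~\ref{Phi-cont-theorem}; the exponential decay of $K_{s-1/2}$ in $|\nu|y_j$ and the bound $\I^\nu_1(x)\ll x$ as $x\to0$, together with the estimates on $G^b(\a,m,\nu)$ underlying Proposition~\ref{Gseries-with-div-sum}, control the tails in $\nu$ and in $b$.
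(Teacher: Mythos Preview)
Your proposal is correct and follows exactly the approach sketched in the paper's (very terse) proof: take the constant Laurent coefficient of the diverging part~\eqref{diverging-part}, which was already shown to equal $L(\a,m)-q(\a,m)\log(16\pi^2 y_1y_2)$, and evaluate all remaining Fourier coefficients of $\Phi(\a,m,s,z)$ at $s=1$ using the half-integer Bessel identities. Your detailed verification of the third and fourth lines, and your identification of the interchange of $\mathcal C_{s=1}$ with the Fourier summation as the point requiring care, are both accurate and go beyond what the paper spells out.
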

\begin{proof}
The theorem is obtained from the preceding treatment of \eqref{diverging-part} together with an evaluation of the other terms of the Fourier expansion of $\Phi(\a,m,s,z)$ at $s=1$.
\end{proof}

\begin{lemma} \label{series-into-log} 
We have for $z \in \H^2 \setminus S(\a,m)$
\begin{align*}
\sum_{ \lambda \in \Lambda^+(\a,m) } \sum_{n=1}^\infty \frac{e^{-2 \pi n |\tr(\lambda y)|}-e^{-2 \pi n ( \lambda y_1 - \lambda'y_2)}}{ n} \br{e(n \tr(\lambda x))+e(-n \tr(\lambda x))}\\
= -4 \pi  \sum_{ \lambda \in \Lambda^+(\a,m)} \beta(\lambda y_1,\lambda' y_2)
+ 2\log \prod_{\lambda \in \Lambda^+(\a,m)} \abs{ \frac{1- e(|\lambda| z_1) \overline{ e(|\lambda'| z_2)}}{e(|\lambda| z_1) -e(|\lambda'| z_2)} }.
\end{align*}
\end{lemma}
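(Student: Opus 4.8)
The plan is to establish the identity termwise, for each fixed $\lambda \in \Lambda^+(\a,m)$, by recognizing the inner double sum as the Taylor expansion of a logarithm. Fix $\lambda$ and write $p := |\lambda| y_1$, $q := |\lambda'| y_2$ (so $p,q > 0$ on $\H^2 \setminus S(\a,m)$, where $\tr(\lambda y) \ne 0$ and neither $\lambda y_1$ nor $\lambda' y_2$ vanishes); also abbreviate $t := \tr(\lambda x)$. Since $\lambda y_1$ and $\lambda' y_2$ have the same sign (namely $\sgn(\lambda)$, which here is $+1$ after collecting $\lambda$ with $-\lambda$), we have $|\tr(\lambda y)| = p + q$ and $|\lambda y_1 - \lambda' y_2| = |p - q|$, so $\lambda y_1 - \lambda' y_2 = \pm(p-q)$ with a sign that does not affect $e^{-2\pi n|p-q|}$; thus the numerator $e^{-2\pi n|\tr(\lambda y)|} - e^{-2\pi n(\lambda y_1 - \lambda' y_2)}$ equals $e^{-2\pi n(p+q)} - e^{-2\pi n|p-q|}$.

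Next I would use $\sum_{n\ge 1} \frac{w^n}{n} = -\log(1-w)$ for $|w|<1$, applied with $w = e^{-2\pi(p+q)} e(\pm t)$ and $w = e^{-2\pi|p-q|} e(\pm t)$, summing over the two choices of sign $e(\pm nt)$. This gives
\begin{align*}
\sum_{n=1}^\infty \frac{e^{-2\pi n(p+q)} - e^{-2\pi n|p-q|}}{n}\br{e(nt)+e(-nt)}
&= -\log\abs{1 - e^{-2\pi(p+q)}e(t)}^2 + \log\abs{1 - e^{-2\pi|p-q|}e(t)}^2 .
\end{align*}
Now observe that $e^{-2\pi(p+q)}e(t) = e(|\lambda| z_1)\,\overline{e(|\lambda'| z_2)}$ since $e(|\lambda| z_1) = e(|\lambda| x_1)e^{-2\pi|\lambda|y_1}$ and $\overline{e(|\lambda'| z_2)} = e(-|\lambda'| x_2) e^{-2\pi|\lambda'| y_2}$, and $|\lambda| x_1 - |\lambda'| x_2 = \pm\tr(\lambda x)$ up to the sign $\sgn(\lambda)=+1$; similarly, in the case $p \ge q$ one checks $e^{-2\pi(p-q)}e(t) = e(|\lambda| z_1)\,/\,e(|\lambda'| z_2)$, while for $p<q$ it equals the reciprocal $e(|\lambda'|z_2)/e(|\lambda|z_1)$, whose modulus is the same. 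Hence $\log\abs{1 - e^{-2\pi|p-q|}e(t)}^2 = \log\abs{\,1 - e(|\lambda|z_1)/e(|\lambda'|z_2)\,}^2 = \log\abs{\,e(|\lambda|z_1) - e(|\lambda'|z_2)\,}^2 - \log\abs{e(|\lambda'|z_2)}^2$, and the last term is $4\pi q = 4\pi\min(p,q)$ (using again $\min(p,q) = \beta(\lambda y_1,\lambda' y_2)$; when $p<q$ one gets $4\pi p$ symmetrically). Collecting, the $\lambda$-term becomes $-4\pi\,\beta(\lambda y_1,\lambda' y_2) + 2\log\abs{(1 - e(|\lambda|z_1)\overline{e(|\lambda'|z_2)})/(e(|\lambda|z_1) - e(|\lambda'|z_2))}$, and summing over $\lambda \in \Lambda^+(\a,m)$ — and turning $\sum 2\log|\cdot|$ into $2\log\prod|\cdot|$ — yields the claim.

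The main obstacle is bookkeeping of signs and cases rather than anything deep: one must track the sign of $\lambda y_1 - \lambda' y_2$, separate $p \ge q$ from $p < q$ to match the $\min$ in $\beta(\lambda y_1,\lambda' y_2)$, and verify that the normalization $e(|\lambda| z_1)\overline{e(|\lambda'| z_2)}$ really is the modulus-$<1$ quantity appearing (so the logarithmic series converges), which it is on $\H^2 \setminus S(\a,m)$ since there $p+q>0$ strictly. I would also note briefly that absolute convergence of the double series on $\H^2 \setminus S(\a,m)$ (away from $S(\a,m)$ both $p+q$ and $|p-q|$ are bounded below on compacta, but $|p-q|$ can be $0$, so one should combine the two geometric series before estimating, exactly as the numerator $e^{-2\pi n(p+q)} - e^{-2\pi n|p-q|}$ already does) justifies the termwise manipulation; this is the only point where the structure of $S(\a,m)$ is used.
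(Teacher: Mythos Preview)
Your approach is exactly the one the paper has in mind---use the Taylor series of the logarithm and sort out the signs---but there is a genuine sign error at the very first step that propagates through.

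You assert that ``$\lambda y_1$ and $\lambda' y_2$ have the same sign (namely $\sgn(\lambda)$).'' This is false. For $\lambda \in \Lambda^+(\a,m)$ one has $N(\lambda) = \lambda\lambda' = -mN(\a)/D < 0$, so $\lambda$ and $\lambda'$ have \emph{opposite} signs: $\lambda>0$ forces $\lambda'<0$. With $p=|\lambda|y_1=\lambda y_1$ and $q=|\lambda'|y_2=-\lambda' y_2$ this gives
\[
\tr(\lambda y) = \lambda y_1 + \lambda' y_2 = p-q,
\qquad
\lambda y_1 - \lambda' y_2 = p+q,
\]
the reverse of what you wrote. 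Thus the numerator is $e^{-2\pi n|p-q|} - e^{-2\pi n(p+q)}$, not $e^{-2\pi n(p+q)} - e^{-2\pi n|p-q|}$. Note also that this is consistent with the definition of $S(\a,m)$: the set $S_\lambda$ is where $\tr(\lambda y)=0$, i.e.\ $p=q$, so it is $|p-q|$ (not $p+q$) that can vanish and is bounded away from zero exactly on $\H^2\setminus S(\a,m)$; your convergence remark has this backwards too.

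If you redo the computation with the correct identification, the two logarithmic terms swap roles and you obtain
\[
-\log\abs{1 - e^{-2\pi|p-q|}e(t)}^2 + \log\abs{1 - e^{-2\pi(p+q)}e(t)}^2
= -4\pi\beta(\lambda y_1,\lambda' y_2) + 2\log\abs{\tfrac{1-e(|\lambda|z_1)\overline{e(|\lambda'|z_2)}}{e(|\lambda|z_1)-e(|\lambda'|z_2)}},
\]
which is the lemma. As your computation stands, the intermediate expression you derive actually equals $+4\pi\beta - 2\log|\cdots|$, the negative of what you then claim in your ``Collecting'' sentence; so you have a second, compensating sign slip there. Once the initial sign of $\lambda'$ is handled correctly, the case distinction is on $\sgn(p-q)=\sgn(\tr(\lambda y))$ (exactly as the paper's one-line proof indicates), and everything else in your outline is fine.
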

\begin{proof}
This identity is proved by making use of the power series of the logarithm $\log(x)$ at $x=1$ and by careful case distinctions based on the sign of $\tr(\lambda y)$.
\end{proof}

Lemma~\ref{series-into-log} gives rise to the following simplification of Theorem~\ref{first-Phi-fc}.
\begin{theorem} \label{nice-Phi-rep} 
The Green function $\Phi(\a,m,z)$ is given for ${z \in \H^2 \setminus S(\a,m)}$ with $\Im(z)>m/(DN(\a))$ by
\begin{align*}
\Phi(\a,m,z)
&= L(\a,m) - q(\a,m)\log(16 \pi^2 y_1y_2)\\
&+\ 2\log \prod_{\lambda \in \Lambda^+(\a,m)} \abs{ \frac{1- e(|\lambda| z_1) \overline{ e(|\lambda'| z_2)}}{e(|\lambda| z_1) -e(|\lambda'| z_2)} }\\
&+\   \sum_{\substack{\nu \in \a\d^{-1} \\ \nu \gg 0 }} \frac{2 \pi}{D} \sqrt{ \frac{mN(\a)}{|N(\nu)|} } \sum_{b=1}^\infty  \frac{G^b(\a,m,\nu)}{b} I_{1} \br{ \frac{4 \pi}{b} \sqrt{ \frac{m |N(\nu)|}{N(\a)D}}}\\
&\times \ \br{e(\tr(\nu z)) + \overline{e(\tr(\nu z))}}\\
&+\ \sum_{\substack{\nu \in \a\d^{-1} \\ \nu >0, \, \nu'<0 }} \frac{2 \pi}{D} \sqrt{ \frac{mN(\a)}{|N(\nu)|} } \sum_{b=1}^\infty  \frac{G^b(\a,m,\nu)}{b} J_{1} \br{ \frac{4 \pi}{b} \sqrt{ \frac{m |N(\nu)|}{N(\a)D}}}\\
&\times \  \br{ e(\nu z_1)\overline{ e(-\nu'z_2) } + \overline{e(\nu z_1)} e(-\nu'z_2) }.
\end{align*}
\end{theorem}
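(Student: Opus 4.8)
The plan is to deduce Theorem~\ref{nice-Phi-rep} directly from the Fourier expansion of Theorem~\ref{first-Phi-fc} by two manipulations: substituting the identity of Lemma~\ref{series-into-log}, and regrouping the sum over $\nu \in \a\d^{-1}$ into pairs $\{\nu,-\nu\}$. Both Theorem~\ref{first-Phi-fc} and Lemma~\ref{series-into-log} are already available in the target range $z \in \H^2\setminus S(\a,m)$ with $\Im(z)>m/(DN(\a))$, so no new analytic input is needed; everything is an algebraic rewriting of a series that is already known to converge there.

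First I would replace the double sum
\[
\sum_{\lambda \in \Lambda^+(\a,m)}\sum_{n=1}^\infty \frac{e^{-2\pi n|\tr(\lambda y)|}-e^{-2\pi n(\lambda y_1-\lambda' y_2)}}{n}\bigl(e(n\tr(\lambda x))+e(-n\tr(\lambda x))\bigr)
\]
appearing in Theorem~\ref{first-Phi-fc} by the right-hand side of Lemma~\ref{series-into-log}. The term $-4\pi\sum_{\lambda\in\Lambda^+(\a,m)}\beta(\lambda y_1,\lambda' y_2)$ produced by the lemma cancels exactly against the explicit $+4\pi\sum_{\lambda\in\Lambda^+(\a,m)}\beta(\lambda y_1,\lambda' y_2)$ in Theorem~\ref{first-Phi-fc}, so that what remains of these two pieces is precisely $2\log\prod_{\lambda\in\Lambda^+(\a,m)}\bigl|\frac{1-e(|\lambda|z_1)\overline{e(|\lambda'|z_2)}}{e(|\lambda|z_1)-e(|\lambda'|z_2)}\bigr|$, together with the constant terms $L(\a,m)-q(\a,m)\log(16\pi^2 y_1y_2)$ and the tail sum over $\nu$, which are carried over unchanged.

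Second I would reorganize the sum over $\nu\in\a\d^{-1}$ with $\nu\ne0$. Since $K$ is a field, $N(\nu)=\nu\nu'\ne0$, so every such $\nu$ is either totally positive or totally negative (the regime $N(\nu)>0$, where $\I^\nu_1=I_1$) or of mixed sign, $\nu>0,\nu'<0$ or $\nu<0,\nu'>0$ (the regime $N(\nu)<0$, where $\I^\nu_1=J_1$). I would pair each $\nu$ with $-\nu$: the index set of the exponential sum $G^b(\a,m,\nu)$ in \eqref{gaus-sum} is stable under $\lambda\mapsto-\lambda$, so $G^b(\a,m,\nu)$ is real and $G^b(\a,m,-\nu)=G^b(\a,m,\nu)$; likewise $|N(-\nu)|=|N(\nu)|$ and $\I^{-\nu}_1=\I^\nu_1$, so the prefactor $\frac{2\pi}{D}\sqrt{mN(\a)/|N(\nu)|}$, the Bessel factor and the sum over $b$ are all unchanged by $\nu\mapsto-\nu$. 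The only thing that changes is the factor $\exp(-2\pi\tr(|\nu|y))\,e(\tr(\nu x))$: a short sign analysis shows it equals $e(\tr(\nu z))$ for $\nu\gg0$ and $\overline{e(\tr(\nu z))}$ for $-\nu\ll0$, which combine to $e(\tr(\nu z))+\overline{e(\tr(\nu z))}$; and it equals $e(\nu z_1)\overline{e(-\nu' z_2)}$ for $\nu>0,\nu'<0$ and $\overline{e(\nu z_1)}\,e(-\nu' z_2)$ for its partner $-\nu$, combining to the stated mixed term. Since $\{\nu\ne0\}$ is the disjoint union $\{\nu\gg0\}\sqcup\{\nu\ll0\}$ on the one hand and $\{\nu>0,\nu'<0\}\sqcup\{\nu<0,\nu'>0\}$ on the other, with negation a fixed-point-free involution exchanging the halves, this turns the single $\nu$-sum of Theorem~\ref{first-Phi-fc} into exactly the two $\nu$-sums of Theorem~\ref{nice-Phi-rep}; the rearrangement is legitimate because the series converges absolutely in the range $\Im(z)>m/(DN(\a))$. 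Substituting both rewritings back yields the asserted formula. There is no real obstacle beyond the \emph{sign bookkeeping} in the last step, i.e.\ verifying the reality of $G^b(\a,m,\nu)$ and the explicit form of $\exp(-2\pi\tr(|\nu|y))\,e(\tr(\nu x))$ in each of the three sign regimes; this is the one point that demands care.
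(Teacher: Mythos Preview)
Your proposal is correct and follows essentially the same route as the paper's own proof: start from Theorem~\ref{first-Phi-fc}, apply Lemma~\ref{series-into-log} so that the $\beta(\lambda y_1,\lambda' y_2)$ terms cancel, and then pair $\nu$ with $-\nu$ using $G^b(\a,m,\nu)=G^b(\a,m,-\nu)$ together with the four-case identity for $\exp(-2\pi\tr(|\nu|y))\,e(\tr(\nu x))$. Your additional remark that $G^b(\a,m,\nu)$ is real is true but not needed, and your phrasing ``$-\nu\ll0$'' is slightly awkward (you mean the partner $-\nu$ of a $\nu\gg0$), but the mathematics is exactly what the paper does.
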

\begin{proof}
Starting from Theorem~\ref{first-Phi-fc}, the main work was done in Lemma~\ref{series-into-log}. For the different notation of the exponentials in the last lines verify
for $\nu \in K^\times$
\begin{align*}
e (\tr(\nu x))  e(i \tr(|\nu|y)) =
\begin{cases}
e(\tr(\nu z)),&\quad \nu \gg 0,\\
\overline{e(-\tr(\nu z))},&\quad \nu \ll 0,\\
e(\nu z_1)\overline{ e(-\nu'z_2) },&\quad \nu>0,\ \nu'<0,\\
\overline{e(-\nu z_1)}e(\nu'z_2),&\quad \nu<0,\ \nu'>0.
\end{cases}
\end{align*}
Finally, by definition~\eqref{gaus-sum} of the exponential sum $G^b(\a,m,\nu)$ we have
\[
G^b(\a,m,\nu)=G^b(\a,m,-\nu)
\]
since the index set of the sum is invariant under multiplication with $-1$.
\end{proof}

\begin{proposition} \label{Phi-real-analytic} 
The regularized Green function $\Phi(\a,m,z)$ is real analytic and satisfies for $j \in \set{1,2}$
\[
\Delta_j \Phi(\a,m,z) = q(\a,m).
\]
\end{proposition}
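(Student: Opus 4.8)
The plan is to prove the two assertions — real analyticity and the eigenvalue equation $\Delta_j \Phi(\a,m,z) = q(\a,m)$ — essentially independently, since each follows from a different piece of the preceding analysis. For the Laplace equation, I would start from the fact (Proposition~\ref{first-Phi-prop}) that for $\Re(s)>1$ the unregularized function satisfies $\Delta_j \Phi(\a,m,s,z) = s(s-1)\Phi(\a,m,s,z)$. Taking the Laurent expansion at $s=1$, write $\Phi(\a,m,s,z) = \tfrac{q(\a,m)}{s-1} + \Phi(\a,m,z) + O(s-1)$ (using Theorem~\ref{Phi-cont-theorem}, which gives the simple pole of residue $q(\a,m)$, and Definition~\ref{regularized-Green-def}). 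Since $\Delta_j$ is an elliptic operator with coefficients independent of $s$, it commutes with taking Laurent coefficients in $s$ once one knows the meromorphic continuation is locally uniform in $z$ in the $C^2$ sense; substituting the expansion into $\Delta_j \Phi(\a,m,s,z) = s(s-1)\Phi(\a,m,s,z)$ and comparing the constant terms in $s-1$ on both sides yields $\Delta_j \Phi(\a,m,z) = \lim_{s\to 1} s(s-1)\cdot\tfrac{q(\a,m)}{s-1} = q(\a,m)$, because $s(s-1)$ vanishes to first order at $s=1$ and kills the constant term of the regular part while turning the pole term into the constant $q(\a,m)$.

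For real analyticity, the cleanest route is to invoke elliptic regularity: $\Phi(\a,m,z)$ is a distributional (indeed locally integrable, by the logarithmic-singularity structure established after Definition~\ref{regularized-Green-def}) solution of $\Delta_j u = q(\a,m) + (\text{current supported on } T(\a,m))$, and away from $T(\a,m)$ it solves $\Delta_j u = q(\a,m)$, an equation with real-analytic coefficients and real-analytic right-hand side; hence $\Phi(\a,m,z)$ is real analytic on $\H^2 \setminus T(\a,m)$ by analytic hypoellipticity of $\Delta_j$. Alternatively — and this is probably what the paper intends given the emphasis on Fourier expansions — one can argue directly: Theorem~\ref{nice-Phi-rep} exhibits $\Phi(\a,m,z)$ on $\H^2 \setminus S(\a,m)$ with $\Im(z)$ large as an explicit combination of $\log$ of holomorphic/antiholomorphic data (real analytic where nonzero) plus Bessel-function series in $e(\tr(\nu z))$ and $e(\nu z_1)\overline{e(-\nu' z_2)}$; one checks these series converge locally uniformly together with all derivatives (the Bessel coefficients $I_1,J_1$ of small argument decay, and the exponentials provide geometric decay in $\nu$), so the sum is real analytic there, and then $\Ga$-invariance together with the eigenvalue equation propagates real analyticity to all of $\H^2 \setminus T(\a,m)$.

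I would present the Laplace-equation argument first and in full, then deduce real analyticity from it via hypoellipticity as the short clean finish; the explicit-Fourier argument can be mentioned as an alternative. The main obstacle is the justification that $\Delta_j$ commutes with the $s=1$ Laurent coefficient extraction: one must know the meromorphic continuation in Theorem~\ref{Phi-cont-theorem} holds not merely pointwise in $z$ but uniformly on compact subsets of $\H^2 \setminus T(\a,m)$ in a topology controlling two derivatives in $z$ — this is true because the continuation was obtained term-by-term from the Fourier expansion, whose coefficients are explicit (ratios of $\Gamma$-functions, $L$-functions, Bessel functions) and whose $z$-dependence is smooth with continuation locally uniform in the relevant norms, so differentiating under the sum and the Laurent extraction are both legitimate. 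Once that is granted, the comparison of constant terms is a one-line computation.
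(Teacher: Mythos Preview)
Your proposal is correct, but the paper's proof is considerably more elementary and handles both assertions simultaneously with a single observation. Starting from the explicit expansion in Theorem~\ref{nice-Phi-rep}, the paper simply notes that every term \emph{except} $-q(\a,m)\log(16\pi^2 y_1y_2)$ is the real part of a holomorphic function in $z_1$ (respectively $z_2$); such terms are automatically real analytic and annihilated by $\Delta_j$. The remaining term is handled by the identity $\Delta_j \log(y_1y_2) = \Delta_j \log(y_j) = -1$, which yields the constant $q(\a,m)$ and is itself manifestly real analytic. No appeal to the $s$-family eigenvalue equation, to commuting $\Delta_j$ with Laurent extraction, or to analytic hypoellipticity is needed.

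Your Laurent-expansion route for the Laplace equation is valid but, as you note, requires justifying that the meromorphic continuation in Theorem~\ref{Phi-cont-theorem} is locally uniform in a $C^2$ topology in $z$; this is true but is real work, whereas the paper sidesteps it entirely. Your hypoellipticity argument for real analyticity is also valid but is a much heavier hammer than the paper's direct inspection. Your ``alternative'' Fourier-expansion sketch comes close to the paper's argument, but you frame it in terms of checking convergence of derivatives of Bessel series rather than making the clean structural observation (real part of holomorphic $\Rightarrow$ harmonic and real analytic) that makes the paper's proof a two-line affair.
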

\begin{proof}
In Theorem~\ref{nice-Phi-rep} we see that all terms except for $- q(\a,m)\log(16 \pi^2 y_1y_2)$ are the real part of a holomorphic function (in $z_1$ or $z_2$, respectively). This proves that $\Phi(\a,m,z)$ is real analytic and the Laplace equation follows with
\[
\Delta_j \log(y_1y_2) = \Delta_j \log(y_j) = -1.
\]
\end{proof}

\subsection{Local Borcherds product} \label{local-bp-section} 

In this subsection, we define for each ideal $\a \in \IK$ the local Borcherds product $\Psi(\a,m,z)$ at infinity in $\XaH$, obtain interesting representations and express it in local coordinates to determine its vanishing orders along the components of the exceptional divisor $E^\infty(\a)$.
The motivation is that the logarithmic singularities of $\Phi(\a,m,z)$ at and near infinity match (up to a factor) the logarithm of $|\Psi(\a,m,z)|$.
The latter is analyzed in Corollary~\ref{Bp-log-sing}.

\begin{definition} \label{Psi-sigma} 
Let
\[
\sigma : \Lambda^+(\a,m) \to \set{\pm 1}
\]
be a sign function with
\[
\lim_{\lambda \to 0} \sigma(\lambda) = +1
\und
\lim_{\lambda \to \infty} \sigma(\lambda) = -1.
\]
We define for $z \in \H^2$
\[
\Psi_\sigma(\a,m,z)
:= \prod_{\lambda \in \Lambda^+(\a,m)} \sigma(\lambda) \psi_\lambda(z)
\with
\psi_\lambda(z) :=  e(|\lambda| z_1) -e(|\lambda'| z_2).
\]
\end{definition}
\begin{remark}
The function $\sigma$ in the definition of $\Psi_\sigma(\a,m,z)$ is there for technical reasons only to make the product convergent. Namely, for fixed $z \in \H^2$ we have 
\[
\lim_{\lambda \to 0}\psi_\lambda(z) = +1
\und
\lim_{\lambda \to \infty }\psi_\lambda(z) = -1.
\]
By the equivalence relation
\[
\sigma_1 \sim \sigma_2
\equiDefArrow
\prod_{\lambda \in \Lambda^+(\a,m)} \sigma_1(\lambda)\sigma_2(\lambda) = 1
\]
we partition the set of all admissible sign functions $\sigma$ into two classes. Note that the product defining the equivalence relation is well-defined since almost all factors are equal to $1$.
We have
\[
\Psi_{\sigma_1}(\a,m,z) = \Psi_{\sigma_2}(\a,m,z) \equiArrow \sigma_1 \sim \sigma_2
\]
and
\[
\Psi_{\sigma_1}(\a,m,z) = -\Psi_{\sigma_2}(\a,m,z) \equiArrow \sigma_1 \not\sim \sigma_2.
\]
There is no canonical choice for the sign function $\sigma$, that is why we have to include it in the definition of $\Psi_\sigma(\a,m,z)$. Later we are mostly interested in $|\Psi(\a,m,z)|$ where the original sign of the product does not matter anymore. Whenever the sign is unimportant we simply write $\Psi(\a,m,z)$.
\end{remark}

\begin{proposition} \label{power-of-BP-is-invariant} 
The product $\Psi_\sigma(\a,m,z)$ is a holomorphic function on $\H^2$ with simple roots at $T^\infty(\a,m)$. Let $n \in 2\N$ with
\[
\frac{n}{1-\ep_0^2} \in \OK.
\]
Then $\Psi(\a,m,z)^n$ is invariant under $\Ginf$.
\end{proposition}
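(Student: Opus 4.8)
The plan is to prove the two claims separately: the holomorphy of $\Psi_\sigma(\a,m,z)$ together with the identification of its zero divisor, and then the $\Ginf$-invariance of the $n$th power, which I would derive by first computing the multiplier of $\Psi_\sigma$ under $\Ginf$ explicitly and then showing it is an $n$th root of unity.

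\textbf{Holomorphy and zeros.} On $\Lambda^+(\a,m)$ the product $|\lambda|\,|\lambda'|=|N(\lambda)|=mN(\a)/D$ is constant, and $\Lambda^+(\a,m)$ decomposes into finitely many $(\OK^\times)^2=\langle\ep_0^2\rangle$-orbits along each of which $|\lambda|$ grows and $|\lambda'|$ decays geometrically. Writing $\sigma(\lambda)\psi_\lambda(z)=1+a_\lambda(z)$, I would check that on any compact $K\subseteq\H^2$ one has $a_\lambda(z)=O(|\lambda|^{-1})$ uniformly for $|\lambda|$ large (there $e(|\lambda|z_1)$ is exponentially small and $\sigma(\lambda)=-1$ absorbs the $-1$) and $a_\lambda(z)=O(|\lambda|)$ uniformly for $|\lambda|$ small, so that $\sum_\lambda\sup_K|a_\lambda|<\infty$. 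The standard convergence theory for infinite products of holomorphic functions then yields holomorphy of $\Psi_\sigma(\a,m,z)$ on $\H^2$ and identifies its zero divisor with $\sum_{\lambda\in\Lambda^+(\a,m)}\div(\psi_\lambda)$. Since $\psi_\lambda(z)=0\Leftrightarrow e(\lambda z_1)=e(-\lambda'z_2)\Leftrightarrow\tr(\lambda z)\in\Z$, this divisor equals $\sum_{\lambda\in\Lambda^+(\a,m)}\sum_{a\in\Z}\set{z\defC\tr(\lambda z)=a}=T^\infty(\a,m)$; the zeros are simple because $\del_{z_1}\psi_\lambda\ne0$ on the zero set and distinct pairs $(\lambda,a)$ give distinct irreducible hypersurfaces — a proportionality $\lambda_2=c\lambda_1$, $\lambda_2'=c\lambda_1'$ forces $c\in\Q$, and $N(\lambda_1)=N(\lambda_2)$ then forces $c=\pm1$, hence $c=1$ as $\lambda_1,\lambda_2>0$.

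\textbf{The multiplier under $\Ginf$.} For $\gamma=\sMatrix{\ep}{\mu}{0}{\ep^{-1}}\in\Ginf$ one has $\gamma z=(\ep^2z_1+\ep\mu,\ (\ep')^2z_2+\ep'\mu')$. Expanding $\psi_\lambda(\gamma z)$ and using that $\ep^2\gg0$ (so $\lambda\mapsto\ep^2\lambda$ permutes $\Lambda^+(\a,m)$, in fact each $\langle\ep_0^2\rangle$-orbit, as $\ep^2\in(\OK^\times)^2$) and that $\lambda\ep\mu\in\a\d^{-1}\cdot\a^{-1}=\d^{-1}$ (so $\tr(\lambda\ep\mu)\in\Z$, hence $e(-\lambda'\ep'\mu')=e(\lambda\ep\mu)$), I would obtain the key identity $\psi_\lambda(\gamma z)=e(\lambda\ep\mu)\,\psi_{\ep^2\lambda}(z)$. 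Both $\prod_\lambda e(\lambda\ep\mu)$ and $\prod_\lambda\sigma(\lambda)\psi_{\ep^2\lambda}(z)$ converge (for the first, the fractional part of $\lambda\ep\mu$ equals that of $-\lambda'\ep'\mu'$ and so decays geometrically along each orbit), and reindexing $\lambda\mapsto\ep^2\lambda$ in the second shows it equals $\Psi_{\sigma''}(\a,m,z)$ with $\sigma''(\lambda):=\sigma(\ep^{-2}\lambda)$ again an admissible sign function, hence $\pm\Psi_\sigma(\a,m,z)$. This yields a transformation law
\[
\Psi_\sigma(\a,m,\gamma z)=j(\gamma)\,\Psi_\sigma(\a,m,z),\qquad j(\gamma):=\pm\!\!\prod_{\lambda\in\Lambda^+(\a,m)}\!\!e(\lambda\ep\mu),
\]
with $\abs{j(\gamma)}=1$, so the proposition reduces to proving $j(\gamma)^n=1$ for every $\gamma\in\Ginf$.

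\textbf{Evaluating $j(\gamma)$ and concluding.} This is the step I expect to be the main obstacle. Grouping the absolutely convergent product $\prod_\lambda e(\lambda\ep\mu)$ over the finitely many $\langle\ep_0^2\rangle$-orbits, a geometric-series computation on each orbit — split the index $k\in\Z$ at a threshold, use $(\ep_0^2)'=\ep_0^{-2}$, and note that each factor only matters modulo $\Z$, so a representative may be changed freely — gives, for any representative $\tilde\lambda$ of the orbit,
\[
\prod_{k\in\Z}e(\ep_0^{2k}\,\tilde\lambda\,\ep\mu)=e\!\left(\frac{\tilde\lambda\ep\mu-\ep_0^{2}\,\tilde\lambda'\ep'\mu'}{\ep_0^2-1}\right).
\]
Summing over all orbits with $\tilde\lambda$ running through the reduced set $R(\a,m,w)$ and using $\sum_{\tilde\lambda\in R(\a,m,w)}\tilde\lambda=(\ep_0^2-1)\,\rho(\a,m,w)$ together with conjugation (which turns $\ep_0^{2}\sum_{\tilde\lambda\in R(\a,m,w)}\tilde\lambda'$ into $-(\ep_0^2-1)\,\rho(\a,m,w)'$) collapses the whole sum to $\tr(\ep\mu\,\rho(\a,m,w))$, so that $j(\gamma)=\pm\,e\big(\tr(\ep\mu\,\rho(\a,m,w))\big)$ — a value independent of $w$, since $\rho(\a,m,w)$ is determined modulo $\a\d^{-1}$ and $\tr(\ep\mu\,\a\d^{-1})\subseteq\tr(\d^{-1})\subseteq\Z$. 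Since $n$ is even, $(\pm1)^n=1$ and $j(\gamma)^n=e\big(\tr(n\,\rho(\a,m,w)\,\ep\mu)\big)$; the hypothesis $n/(1-\ep_0^2)\in\OK$ gives $n\,\rho(\a,m,w)=\tfrac{n}{\ep_0^2-1}\sum_{\tilde\lambda\in R(\a,m,w)}\tilde\lambda\in\OK\cdot\a\d^{-1}=\a\d^{-1}$, whence $n\,\rho(\a,m,w)\,\ep\mu\in\a\d^{-1}\cdot\a^{-1}=\d^{-1}$ has integral trace and $j(\gamma)^n=1$. Therefore $\Psi_\sigma(\a,m,\gamma z)^n=\Psi_\sigma(\a,m,z)^n$ for every $\gamma\in\Ginf$, and since $n$ is even this is independent of the sign of $\Psi_\sigma$, so $\Psi(\a,m,z)^n$ is $\Ginf$-invariant. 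The technical heart is this last step: making the rearrangement of $\prod_\lambda e(\lambda\ep\mu)$ rigorous and recognising the Weyl vector in the resulting closed form.
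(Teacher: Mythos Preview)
Your proof is correct and follows essentially the same argument as the paper: the same convergence considerations for the product, the same identification $\psi_\lambda=0\Leftrightarrow\tr(\lambda z)\in\Z$, and crucially the same orbit-by-orbit geometric-series evaluation of $\prod_{k\in\Z}e(\ep_0^{2k}\lambda\,\cdot)$ yielding $\tr\!\big(\tfrac{\lambda\,\cdot}{\ep_0^2-1}\big)$, after which the hypothesis $n/(1-\ep_0^2)\in\OK$ forces integrality. The only organisational differences are that the paper uses the semidirect decomposition $\Ginfb\cong\a^{-1}\rtimes(\OK^\times)^2$ to treat translations and dilations separately (whereas you handle a general $\gamma$ at once via the multiplier $j(\gamma)$), and that you go one step further by summing over orbits to recognise the Weyl vector $\rho(\a,m,w)$ in $j(\gamma)$ --- a connection the paper only makes later in Proposition~\ref{borcherds-product-using-weyl}.
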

\begin{proof}
Clearly, each $\psi_\lambda(z)$ for $\lambda \in \Lambda^+(\a,m)$ is holomorphic. Consider
\begin{align*}
\psi_\lambda(z) = 0
&\equiArrow e(\lambda z_1) = e(-\lambda'z_2)\\
&\equiArrow e(\tr(\lambda z)) = 1\\
&\equiArrow \tr(\lambda z) \in \Z
\end{align*}
to see that $\psi_\lambda(z)$ vanishes if and only if $z$ lies in the components of $T^\infty(\a,m)$ belonging to $\lambda$ (cf.~representation~\eqref{Tinf-def} of $T^\infty(\a,m)$). Further, from $e(z)$ having a non-vanishing derivative it follows that all zeros of $\psi_\lambda(z)$ are simple.
Hence, the normal convergence of the product proves that $\Psi(\a,m,z)$ is a holomorphic function on $\H^2$ with simple roots at $T^\infty(\a,m)$.
To prove the $\Ginf$ invariance we make use of the decomposition
$\Ginfb \cong \a^{-1} \rtimes (\OK^\times)^2$
and show the invariance for both factors individually.
For $\ep^2 \in (\OK^\times)^2$ it is immediate by the definition of $\psi_\lambda(z)$ that we have
\[
\psi_\lambda(\ep^2 z)  = \psi_{\ep^2 \lambda}(z).
\]
Because $n$ is even we do not have to bother about the sign. Hence, the factors are only permuted by the action of $(\OK^\times)^2$.
However, for $\mu \in \a^{-1}$ we have
\begin{align*}
\psi_\lambda(z+\mu)
&= e(\lambda (z_1+\mu)) - e(-\lambda' (z_2+\mu'))\\
&= e(\lambda z_1)e(\lambda\mu) - e(-\lambda' z_2)e(-\lambda' \mu')\\
&= e(\lambda\mu) \br{e(\lambda z_1) - e(-\lambda' z_2)e(-\lambda\mu)e(-\lambda' \mu')}\\
&= e(\lambda\mu) \psi_{\lambda}(z).
\end{align*}
Here we used $\tr(\lambda\mu) \in \Z$ which is true because $\a\d^{-1}$ is the trace dual of $\a^{-1}$.
Analogously, we can factor $e(-\lambda'\mu')$ out to obtain
\[
\psi_\lambda(z+\mu) = e(-\lambda'\mu') \psi_{\lambda}(z).
\]
In particular, we have $e(\lambda\mu)= e(-\lambda'\mu')$ which can also be seen directly using $\tr(\lambda\mu) \in \Z$.
The set $\Lambda^+(\a,m)$ decomposes into finitely many $(\OK^\times)^2$ orbits. For each orbit we have
\[
\prod_{k \in \Z} \psi_{\ep_0^{2k} \lambda}(z+\mu)^n
= \prod_{k \in \Z} \psi_{\ep_0^{2k}\lambda}(z)^n \cdot \prod_{k \in \Z} e(\lambda \ep_0^{2k} \mu)^n.
\]
To compute the later product we use
\[
\prod_{k \in \Z} e(\lambda \ep_0^{2k} \mu)
= \prod_{k=1}^\infty e(\lambda \ep_0^{-2k} \mu) \cdot \prod_{k=0}^\infty e(-\lambda' \ep_0^{-2k} \mu').
\]
Using the functional equation, this boils down to computing the sum
\begin{align*}
\sum_{k=1}^\infty \lambda \ep_0^{-2k} \mu - \sum_{k=0}^\infty \lambda' \ep_0^{-2k} \mu'
&= \lambda \mu \frac{\ep_0^{-2}}{1-\ep_0^{-2}} - \lambda' \mu' \frac{1}{1-\ep_0^{-2}}\\
&= \lambda \mu \frac{1}{\ep_0^{2}-1} - \lambda' \mu' \br{ \frac{1}{1-\ep_0^{2}} }'
 = \tr\br{ \frac{\lambda \mu }{\ep_0^{2}-1} }.
\end{align*}
Hence, we have proven
\[
\prod_{k \in \Z} e(\lambda \ep_0^{2k} \mu)^n = e\br{ \tr\br{ \frac{n \lambda \mu }{\ep_0^{2}-1} }}.
\]
By the choice of $n$ we have
\[
\frac{n\lambda}{\ep_0^{2}-1} \in \a\d^{-1}
\]
which proves
\[
\tr\br{ \frac{n\lambda \mu }{\ep_0^{2}-1} } \in \Z.
\]
Hence, the infinite product
\[
\prod_{k \in \Z} \psi_{\ep_0^{2k}\lambda}(z)^n
\]
is invariant under translation by $\a^{-1}$ and therefore invariant under $\Ginf$. The same holds for $\Psi(\a,m,z)^n$ which is a finite product of such factors.
\end{proof}

An easy way to come up with an admissible sign function $\sigma$ is to partition the set $\Lambda^+(\a,m)$ into a lower and an upper part with respect to a fixed $w \in (\R^+)^2$ using the trace by
\[
\sigma_w : \Lambda^+(\a,m) \to \set{\pm 1},
\quad \sigma_w (\lambda) := \begin{cases}
+1,&\quad \tr(\lambda w) < 0,\\
-1,&\quad \tr(\lambda w) \ge 0.
\end{cases}
\]
The next proposition states a useful representation of $\Psi_{\sigma_w}$.

\begin{proposition} \label{borcherds-product-using-weyl} 
Let $w \in (\R^+)^2$ and let
\[
\Lambda_w :=
\set{ \lambda \in \Lambda^+(\a,m) \defC \tr(\lambda w) \ge 0}
\cup
\set{ \lambda \in \Lambda^-(\a,m) \defC \tr(\lambda w) > 0}.
\]
Then we have
\[
\Psi_{\sigma_w}(\a,m,z) = e\br{ \tr\br{ \rho(\a,m,w) z } } \prod_{\lambda \in \Lambda_w} \br{1- e(\tr(\lambda z))}.
\]
\end{proposition}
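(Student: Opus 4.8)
The plan is to rewrite each factor $\sigma_w(\lambda)\psi_\lambda(z)$ so as to peel off a pure exponential, and then to reassemble the two resulting normally convergent products. Recall that $N(\lambda)<0$ for $\lambda\in\Lambda(\a,m)$, so for $\lambda\in\Lambda^+(\a,m)$ we have $\lambda>0>\lambda'$, whence $|\lambda|=\lambda$, $|\lambda'|=-\lambda'$ and $\psi_\lambda(z)=e(\lambda z_1)-e(-\lambda'z_2)$. First I would split $\Lambda^+(\a,m)$ according to the sign of $\tr(\lambda w)$, which is precisely the datum encoded by $\sigma_w$. If $\tr(\lambda w)\ge0$, then $\sigma_w(\lambda)=-1$ and $\sigma_w(\lambda)\psi_\lambda(z)=e(-\lambda'z_2)-e(\lambda z_1)=e(-\lambda'z_2)\br{1-e(\tr(\lambda z))}$, and such $\lambda$ lies in $\Lambda_w$. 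If $\tr(\lambda w)<0$, then $\sigma_w(\lambda)=+1$ and $\sigma_w(\lambda)\psi_\lambda(z)=e(\lambda z_1)\br{1-e(-\tr(\lambda z))}=e(\lambda z_1)\br{1-e(\tr(\mu z))}$, where $\mu:=-\lambda\in\Lambda^-(\a,m)$ satisfies $\tr(\mu w)>0$, so $\mu\in\Lambda_w$ as well. Since $\lambda\mapsto-\lambda$ is a bijection from $\set{\lambda\in\Lambda^+(\a,m):\tr(\lambda w)<0}$ onto $\set{\mu\in\Lambda^-(\a,m):\tr(\mu w)>0}$, the factors $1-e(\tr(\lambda z))$ run bijectively over $\lambda\in\Lambda_w$, and hence $\Psi_{\sigma_w}(\a,m,z)=P(z)\prod_{\lambda\in\Lambda_w}\br{1-e(\tr(\lambda z))}$ with the exponential prefactor $P(z):=\prod_{\lambda\in\Lambda^+(\a,m),\ \tr(\lambda w)\ge0}e(-\lambda'z_2)\cdot\prod_{\lambda\in\Lambda^+(\a,m),\ \tr(\lambda w)<0}e(\lambda z_1)$.

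To evaluate $P(z)$ I would decompose $\Lambda^+(\a,m)$ into its $(\OK^\times)^2$-orbits $\set{\ep_0^{2k}\lambda^*:k\in\Z}$. Along an orbit the map $k\mapsto\tr(\ep_0^{2k}\lambda^*w)$ is strictly increasing — the consecutive difference is a sum of two positive terms since $\lambda^*>0>(\lambda^*)'$ and $w_1,w_2>0$ — and it tends to $\pm\infty$ as $k\to\pm\infty$; therefore the reduced representative $\lambda^*\in R(\a,m,w)$ is the unique orbit element with $\tr(\lambda^*w)\ge0>\tr(\ep_0^{-2}\lambda^*w)$, the orbit elements with $\tr(\cdot\,w)\ge0$ are the $\ep_0^{2j}\lambda^*$ with $j\ge0$, and those with $\tr(\cdot\,w)<0$ are the $\ep_0^{2j}\lambda^*$ with $j\le-1$. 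Summing the resulting geometric series over $j$ and then over the orbits yields
\begin{align*}
\sum_{\lambda\in\Lambda^+(\a,m),\ \tr(\lambda w)<0}\lambda &=\sum_{\lambda^*\in R(\a,m,w)}\frac{\lambda^*}{\ep_0^2-1}=\rho(\a,m,w),\\
\sum_{\lambda\in\Lambda^+(\a,m),\ \tr(\lambda w)\ge0}(-\lambda') &=\sum_{\lambda^*\in R(\a,m,w)}\frac{-(\lambda^*)'\,\ep_0^2}{\ep_0^2-1}.
\end{align*}
The second sum equals $\rho(\a,m,w)'$: applying conjugation to the definition of $\rho(\a,m,w)$ and using $(\ep_0^2-1)'=\ep_0^{-2}-1$ together with $\tfrac1{\ep_0^{-2}-1}=-\tfrac{\ep_0^2}{\ep_0^2-1}$ gives exactly this expression. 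Hence $P(z)=e\br{\rho(\a,m,w)z_1}\,e\br{\rho(\a,m,w)'z_2}=e\br{\tr(\rho(\a,m,w)z)}$, which is the prefactor claimed.

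It remains to justify the factor-by-factor regrouping at the level of limits. The product $\Psi_{\sigma_w}(\a,m,z)$ converges normally on $\H^2$ by Proposition~\ref{power-of-BP-is-invariant}, and each of the two new products does so too: in $P(z)$ all but finitely many factors are of the form $e(rz_j)$ with $|r|$ decaying geometrically along each orbit, and in $\prod_{\lambda\in\Lambda_w}\br{1-e(\tr(\lambda z))}$ all but finitely many factors satisfy $\abs{1-\br{1-e(\tr(\lambda z))}}=\abs{e(\tr(\lambda z))}$ with imaginary part $\tr(\lambda y)$ growing geometrically along each orbit. Thus the finite partial-product identity from the first paragraph passes to the limit, proving the proposition. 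The step I expect to require genuine care — and thus the main obstacle — is the bookkeeping in the second paragraph: correctly matching the conjugated geometric sum with $\rho(\a,m,w)'$, and keeping straight the asymmetric strict/non-strict inequalities that govern $\Lambda_w$ and $\sigma_w$ (in particular, that an element with $\tr(\lambda w)=0$ is assigned to $\Lambda_w$ through $\Lambda^+(\a,m)$, consistently with $\sigma_w(\lambda)=-1$). Everything else is routine manipulation of $e(\cdot)$ and of geometric series.
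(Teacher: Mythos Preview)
Your proof is correct and follows essentially the same approach the paper sketches in one sentence: you exploit the functional equation of $e(\cdot)$ to factor each $\sigma_w(\lambda)\psi_\lambda(z)$, and you use the reduced representatives in $R(\a,m,w)$ to sum the geometric series orbit by orbit, recovering $\rho(\a,m,w)$ and its conjugate. Your version simply makes all of this explicit, including the monotonicity of $k\mapsto\tr(\ep_0^{2k}\lambda^* w)$, the identification of $\rho'$ via $(\ep_0^2-1)'=\ep_0^{-2}-1$, and the normal-convergence justification for regrouping; none of this goes beyond what the paper intends.
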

\begin{proof}
This proposition is proven by exploiting the functional equation of the exponential function and using $R(\a,m,w)$, the set of reduced $\lambda \in \Lambda^+(\a,m)$ with respect to $w$, to express the elements of $\Lambda_w$.
\end{proof}

The classic approach introducing the local Borcherds product makes use of Weyl chambers (cf.~\cite[p.~153, eq. (3.13)]{123mf}). The next corollary shows that the resulting product is the same.
\begin{corollary}
Let $W \in W(\a,m)$ be a Weyl chamber of index $m$. Let us fix one $z_0 \in W$ to define $\sigma(\lambda) :=-\sgn(\tr(\lambda y_0))$. Then we have
\[
\Psi_\sigma(\a,m,z) = e\br{ \tr\br{ \rho(\a,m,W) z } } \prod_{ \substack{\lambda \in \Lambda(\a,m) \\ (\lambda,W)>0}  } \br{1- e(\tr(\lambda z))}.
\]
\end{corollary}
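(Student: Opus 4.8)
The plan is to deduce the corollary directly from Proposition~\ref{borcherds-product-using-weyl} by choosing $w := y_0 = \Im(z_0)$, so that the only real content is to match up the sign function, the Weyl vector and the index set of the product in the two descriptions. First I would record that, since $z_0$ lies in a Weyl chamber $W$, i.e. in a connected component of $\H^2 \setminus S(\a,m)$ with $S(\a,m) = \bigcup_{\lambda \in \Lambda^+(\a,m)} S_\lambda$ and $S_\lambda = \set{z \in \H^2 \defC \tr(\lambda y) = 0}$, one has $\tr(\lambda y_0) \ne 0$ for every $\lambda \in \Lambda^+(\a,m)$, hence also for every $\lambda \in \Lambda^-(\a,m)$ (replace $\lambda$ by $-\lambda$). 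Therefore $\sgn(\tr(\lambda y_0)) \in \setpm$, and comparing with the definition of $\sigma_w$ preceding Proposition~\ref{borcherds-product-using-weyl} shows that the sign function $\sigma(\lambda) = -\sgn(\tr(\lambda y_0))$ coincides with $\sigma_{y_0}(\lambda)$ for all $\lambda \in \Lambda^+(\a,m)$; in particular $\sigma$ is admissible in the sense of Definition~\ref{Psi-sigma} (as $\lambda \to 0$ one has $\lambda' \to -\infty$, so $\tr(\lambda y_0) \to -\infty$ and $\sigma(\lambda) \to +1$; as $\lambda \to \infty$ one has $\tr(\lambda y_0) \to +\infty$ and $\sigma(\lambda) \to -1$). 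Consequently $\Psi_\sigma(\a,m,z) = \Psi_{\sigma_{y_0}}(\a,m,z)$.

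Next I would apply Proposition~\ref{borcherds-product-using-weyl} with $w = y_0$ to obtain
\[
\Psi_\sigma(\a,m,z) = e\br{\tr\br{\rho(\a,m,y_0)\,z}} \prod_{\lambda \in \Lambda_{y_0}} \br{1 - e(\tr(\lambda z))},
\]
where $\Lambda_{y_0} = \set{\lambda \in \Lambda^+(\a,m) \defC \tr(\lambda y_0) \ge 0} \cup \set{\lambda \in \Lambda^-(\a,m) \defC \tr(\lambda y_0) > 0}$. Since $\tr(\lambda y_0) \ne 0$ throughout, this index set is just $\set{\lambda \in \Lambda(\a,m) \defC \tr(\lambda y_0) > 0}$. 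It then remains to identify this data with the objects attached to $W$ itself: the set $R(\a,m,w)$ of reduced elements, and hence the Weyl vector $\rho(\a,m,w)$, depends on $w = \Im(z)$ only through the Weyl chamber containing $z$, so $\rho(\a,m,y_0) = \rho(\a,m,W)$; and, unwinding the pairing used in \cite[p.~153, eq.~(3.13)]{123mf}, the condition $(\lambda,W) > 0$ for $\lambda \in \Lambda(\a,m)$ means precisely that $\tr(\lambda y) > 0$ for $z = x + iy \in W$ (a condition independent of the chosen $z \in W$, again because $W$ is a component of $\H^2 \setminus S(\a,m)$). Substituting these two identifications into the displayed formula yields the asserted expression.

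The one genuinely delicate point is the last identification: checking that the abstract inequality $(\lambda, W) > 0$ in the Weyl-chamber formalism of \cite{123mf} corresponds, under the identifications of Section~\ref{pre-section} relating $\a\d^{-1}$, the dual lattice $L(\a)^\vee$ and the quadratic space $V$, to the concrete trace inequality $\tr(\lambda y) > 0$ — that is, that the sign and normalization conventions are compatible. Everything else is bookkeeping, with the non-vanishing $\tr(\lambda y_0) \ne 0$ on a Weyl chamber being exactly what forces the two descriptions of the index set (and the admissibility of $\sigma$) to agree.
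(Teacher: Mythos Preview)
Your proof is correct and follows exactly the same approach as the paper: set $w:=y_0$, identify $\sigma=\sigma_w$, $\rho(\a,m,W)=\rho(\a,m,w)$, and $\set{\lambda \in \Lambda(\a,m) \defC (\lambda,W)>0}=\Lambda_w$, then invoke Proposition~\ref{borcherds-product-using-weyl}. You supply more detail (the non-vanishing of $\tr(\lambda y_0)$ on a Weyl chamber, the admissibility of $\sigma$, and the caveat about matching $(\lambda,W)>0$ with $\tr(\lambda y)>0$), but the argument is the same.
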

\begin{proof}
Using $w:=y_0$, we have $\sigma=\sigma_{w}$, $\rho(\a,m,W) = \rho(\a,m,w)$ and
\[
\set{\lambda \in \Lambda(\a,m) \defC (\lambda,W)>0} = \Lambda_w
\]
with $\Lambda_w$ defined as in Proposition~\ref{borcherds-product-using-weyl}. Hence, the result is nothing but a direct application of Proposition~\ref{borcherds-product-using-weyl}.
\end{proof}

\begin{proposition} \label{BP-vanish-order} 
Let $(\alpha,\beta)$ be a totally positive basis of $\a^{-1}$ and $n \in \N$ with
\[
\frac{n}{1-\ep_0^2} \in \OK.
\]
Then $\Psi(\a,m,z)^n$ is invariant under $\a^{-1}$ and possesses a holomorphic extension to $u=0$ and $v=0$ in local coordinates $(u,v)$ with respect to $(\alpha,\beta)$. At $u=0$ ($v=0$, respectively) the product vanishes. Its order of vanishing along $u$ ($v$, respectively) is given by $n\tr(\rho(\a,m,\alpha)\alpha)$ ($n\tr(\rho(\a,m,\beta)\beta)$, respectively).
\end{proposition}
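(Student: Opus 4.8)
The plan is to combine the explicit Weyl-chamber representation of $\Psi_{\sigma_w}$ from Proposition~\ref{borcherds-product-using-weyl} with the substitution rules of Lemma~\ref{exponentials-in-local} that pass to local coordinates. First I would fix the totally positive basis $(\alpha,\beta)$ of $\a^{-1}$, so that by~\eqref{local-to-z} the local coordinates satisfy $2\pi i z_1 = \alpha \log u + \beta \log v$ and $2\pi i z_2 = \alpha' \log u + \beta' \log v$, with $u=0$ corresponding (up to a point) to the curve $S_k$ attached to $\alpha$ and $v=0$ to the one attached to $\beta$. The $\a^{-1}$ invariance of $\Psi(\a,m,z)^n$ is already furnished by Proposition~\ref{power-of-BP-is-invariant}; what remains is the holomorphic extension and the vanishing order.

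Next I would rewrite $\Psi_{\sigma_w}(\a,m,z)^n$ using Proposition~\ref{borcherds-product-using-weyl}, choosing $w=\alpha$ (or $w=\beta$) when analyzing the curve $u=0$ ($v=0$), so that
\[
\Psi_{\sigma_\alpha}(\a,m,z)^n = e\!\br{n\tr\!\br{\rho(\a,m,\alpha)z}}\prod_{\lambda\in\Lambda_\alpha}\br{1-e(\tr(\lambda z))}^n.
\]
For each factor $1-e(\tr(\lambda z))$ with $\lambda\in\Lambda_\alpha$: since $\lambda\in\a\d^{-1}$ we may apply the first line of Lemma~\ref{exponentials-in-local} to get $e(\tr(\lambda z)) = u^{\tr(\alpha\lambda)}v^{\tr(\beta\lambda)}$, and one checks that for $\lambda\in\Lambda_\alpha$ the exponent $\tr(\alpha\lambda)\ge 0$ (this is exactly the condition $\tr(\lambda\alpha)\ge 0$ built into $\Lambda_w$), with $\tr(\beta\lambda)$ also $\ge 0$ outside finitely many exceptions; hence $1-u^{\tr(\alpha\lambda)}v^{\tr(\beta\lambda)}$ is a holomorphic function of $(u,v)$ near $u=0$, equal to $1$ when $\tr(\alpha\lambda)>0$ and $u=0$. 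Since the product converges normally and only finitely many $\lambda$ have $\tr(\alpha\lambda)=0$, the product over $\Lambda_\alpha$ extends holomorphically across $u=0$ and is nonvanishing there. For the prefactor, Lemma~\ref{exponentials-in-local} applied to $\nu=n\rho(\a,m,\alpha)\in\a\d^{-1}$ (using $n/(1-\ep_0^2)\in\OK$ so that this lies in $\a\d^{-1}$) gives $e(n\tr(\rho(\a,m,\alpha)z)) = u^{n\tr(\alpha\rho(\a,m,\alpha))}v^{n\tr(\beta\rho(\a,m,\alpha))}$, a monomial in $u,v$. Multiplying, $\Psi(\a,m,z)^n$ extends holomorphically to $u=0$ with vanishing order exactly $n\tr(\rho(\a,m,\alpha)\alpha)$ along $u$, since the remaining product factor is a unit there. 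The argument for $v=0$ is symmetric with the roles of $\alpha$ and $\beta$ interchanged, using $w=\beta$.

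The main obstacle is the bookkeeping needed to see that all relevant trace-exponents are nonnegative and that the local monomial exponent is precisely $\tr(\rho(\a,m,\alpha)\alpha)$ rather than something shifted: one must verify that the index set $\Lambda_\alpha$ from Proposition~\ref{borcherds-product-using-weyl} is exactly the set of $\lambda$ with $\tr(\lambda\alpha)\ge 0$ matched up with the reduced representatives $R(\a,m,\alpha)$, so that the Weyl vector $\rho(\a,m,\alpha)=\sum_{\lambda\in R(\a,m,\alpha)}\lambda/(\ep_0^2-1)$ records exactly the total vanishing contributed by all $(\OK^\times)^2$-translates collapsing into finitely many local factors. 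One should also confirm that $\Lambda^-(\a,m)$-elements with $\tr(\lambda\alpha)>0$ contribute factors $1-u^{\tr(\alpha\lambda)}v^{\tr(\beta\lambda)}$ that are still holomorphic and invertible at $u=0$ when $\tr(\alpha\lambda)>0$ — this follows since $\Lambda_w$ was precisely designed to enforce $\tr(\lambda w)\ge 0$. Finally one must double-check that, the exceptional point of $S_k$ aside, the locus $u=0$ really is the component labeled by $A_k=\alpha$ in the sense of~\cite{geer1988hms} and~\eqref{Zinf-def}, so that the stated vanishing order $n\tr(\rho(\a,m,\alpha)\alpha)$ aligns with the multiplicity in the definition of $Z^\infty(\a,m)$; this is essentially a compatibility of normalizations between~\eqref{local-to-z} and the cusp resolution data.
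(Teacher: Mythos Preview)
Your proposal is correct and follows essentially the same route as the paper: invoke Proposition~\ref{power-of-BP-is-invariant} for the $\a^{-1}$ invariance, apply the Weyl-chamber representation of Proposition~\ref{borcherds-product-using-weyl} with $w$ equal to the relevant basis vector, pass to local coordinates via Lemma~\ref{exponentials-in-local}, and read off that the infinite product is a unit along the exceptional curve while the monomial prefactor $e(n\tr(\rho z))=u^{n\tr(\rho\alpha)}v^{n\tr(\rho\beta)}$ supplies the stated vanishing order. The paper carries out the symmetric case ($v=0$ with $w=\beta$) and is slightly more explicit about the convergence domain $\{0<|u|<1,\,|v|<1\}$ and the finiteness facts on the exponents; you should also note, as the paper does, that the Weyl vector $\rho(\a,m,\alpha)$ is totally positive, which is what forces $\tr(\rho(\a,m,\alpha)\alpha)>0$ and hence genuine vanishing. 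Your closing paragraph about the Weyl vector ``recording the total vanishing contributed by all $(\OK^\times)^2$-translates'' is a slight misreading: in this argument the product part contributes no vanishing along $u=0$ at all, and the entire order comes from the single monomial prefactor---so that paragraph can simply be dropped.
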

\begin{proof}
Since $\alpha$ and $\beta$ (and hence $u$ and $v$) are interchangeable, we prove the result for $v$ only. By Proposition~\ref{borcherds-product-using-weyl} the Borcherds product is expressible as
\[
e\br{ \tr\br{ \rho(\a,m,\beta) z } } \prod_{\lambda \in \Lambda_\beta} \br{1- e(\tr(\lambda z))}.
\]
By Lemma~\ref{exponentials-in-local} each factor of the product is $\a^{-1}$ invariant and we have
\[
\prod_{\lambda \in \Lambda_\beta} \br{1- e(\tr(\lambda z))}= 
\prod_{\lambda \in \Lambda_\beta} (1-u^{\tr(\lambda \alpha)}v^{\tr(\lambda \beta)})
\]
in local coordinates.
We list some facts we know about the exponents of $u$ and $v$:
\begin{enumerate}[(i)]
\item We have $\tr(\lambda \alpha) \in \Z$ and $\tr(\lambda \beta) \in \N_0$ for all $\lambda \in \Lambda_\beta$.
\item For each $m \in \Z$ there are at most two $\lambda \in \Lambda_\beta$ with $\tr(\lambda \alpha) = m$ ($\tr(\lambda \beta) = m$ respecively).
\item There are only finitely many $\lambda \in \Lambda_\beta$ with $\tr(\lambda \alpha)<0$.
\end{enumerate}
Those facts imply that the product converges normally to a holomorphic function in $u$ and $v$ in the domain
\[
\set{ (u,v) \in \C^2 \defC 0<|u|<1 , |v|<1 }
\]
and that it does not vanish at $v=0$.
Hence, we are left with inspecting the factor in front of the product $e(\tr(\rho z))$ (for simplicity we abbreviate $\rho:=\rho(\a,m,\beta)$ for the rest of the proof). This factor might not be $\a^{-1}$ invariant but the $n$-th power is because we have $e(\tr(\rho z))^n=e(\tr(n\rho z))$.
Now by assumption on $n$ and the definition of the Weyl vector $\rho$ we have $n\rho \in \a\d^{-1}$. Hence, Lemma~\ref{exponentials-in-local} again implies the $\a^{-1}$ invariance of $e(\tr(n\rho z))$ and
\[
e(\tr(n\rho z)) = u^{\tr(n\rho \alpha)}v^{\tr(n\rho \beta)}.
\]
It is easy to see that the Weyl vector $\rho$ is totally positive. That makes $\tr(\rho \beta)$ positive which finishes the proof.
\end{proof}

\begin{corollary} \label{Bp-log-sing} 
The function
\[
\log  \abs{\Psi(\a,m,z)}^2
\]
is well-defined in a neighborhood of the exceptional divisor $E^\infty(\a) \se \XaH$ and has logarithmic singularities along the divisor $T^\infty(\a,m)+Z^\infty(\a,m)$.
\end{corollary}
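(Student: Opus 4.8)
The plan is to deduce the statement directly from Proposition~\ref{BP-vanish-order} together with the definition~\eqref{Zinf-def} of $Z^\infty(\a,m)$; all the analytic content has already been prepared.

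First I would fix $n \in 2\N$ with $n/(1-\ep_0^2) \in \OK$. By Proposition~\ref{power-of-BP-is-invariant} the function $\Psi(\a,m,z)^n$ is holomorphic on $\H^2$ with divisor $n\,T^\infty(\a,m)$ there, and it is invariant under $\Ginf$; by Proposition~\ref{BP-vanish-order} it extends holomorphically across $u=0$ and $v=0$ in the local coordinates $(u,v)$ attached to any totally positive basis $(\alpha,\beta)$ of $\a^{-1}$. Since these charts cover a neighborhood of $E^\infty(\a)$ in $\XaH$ and $\Psi^n$ patches to a holomorphic function on such a neighborhood, while $|\Psi(\a,m,z)|^2 = |\Psi(\a,m,z)^n|^{2/n}$ does not depend on the sign function $\sigma$ nor on the branch of the $n$-th root, the function $|\Psi(\a,m,z)|^2$ extends continuously (with value $0$ along $E^\infty(\a)$) to a neighborhood of $E^\infty(\a)$. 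Consequently $\log|\Psi(\a,m,z)|^2 = \tfrac1n\log|\Psi(\a,m,z)^n|^2$ is well-defined on that neighborhood (valued in $[-\infty,\infty)$) and, by the logarithmic-singularity scaling fixed in Section~\ref{pre-section}, has logarithmic singularities exactly along $\tfrac1n\div(\Psi(\a,m,z)^n)$.

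Next I would identify this divisor. Away from $E^\infty(\a)$ we have $\div(\Psi^n)=n\,T^\infty(\a,m)$ by Proposition~\ref{power-of-BP-is-invariant}. Along $E^\infty(\a)$: in the proof of Proposition~\ref{BP-vanish-order} the strict-transform factor $\prod_{\lambda\in\Lambda_\beta}\br{1- u^{\tr(\lambda\alpha)}v^{\tr(\lambda\beta)}}$ is shown not to vanish identically on $\{v=0\}$ (and, by symmetry, not on $\{u=0\}$), so $T^\infty(\a,m)$ contributes no exceptional component, while the Weyl-vector factor forces $\Psi^n$ to vanish along the component $S_k$ of $E^\infty(\a)$ corresponding to $A_k\in\a^{-1}$ to order $n\,\tr(\rho(\a,m,A_k)A_k)$. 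Hence, on a neighborhood of $E^\infty(\a)$,
\[
\div\br{\Psi(\a,m,z)^n} \;=\; n\,T^\infty(\a,m) + n\sum_{k=1}^{r_\infty}\tr(\rho(\a,m,A_k)A_k)\,S_k \;=\; n\br{T^\infty(\a,m)+Z^\infty(\a,m)}
\]
by~\eqref{Zinf-def}. Dividing by $n$ gives that $\log|\Psi(\a,m,z)|^2$ has logarithmic singularities along $T^\infty(\a,m)+Z^\infty(\a,m)$, as claimed.

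I expect the only real work to be the geometric bookkeeping in the middle step: checking that the local charts $(u,v)$ of Proposition~\ref{BP-vanish-order} genuinely cover a neighborhood of $E^\infty(\a)$ and glue compatibly along the intersection points of the cycle of curves $S_k$ from the Hirzebruch resolution, and that the basis element $A_k$ attached to $S_k$ is used consistently with~\eqref{Zinf-def}, so that the per-chart vanishing orders assemble to the stated divisor on $\XaH$ without missing or double-counting a component. The holomorphic extension, the simplicity of the zeros along $T^\infty(\a,m)$, and the $\Ginf$-invariance are already supplied by Propositions~\ref{power-of-BP-is-invariant} and~\ref{BP-vanish-order}.
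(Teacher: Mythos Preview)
Your proposal is correct and follows essentially the same route as the paper's proof: pass to the $\Ginf$-invariant power $\Psi(\a,m,z)^n$, use Propositions~\ref{power-of-BP-is-invariant} and~\ref{BP-vanish-order} to obtain a holomorphic extension across $E^\infty(\a)$, and then read off the divisor by matching the vanishing orders from Proposition~\ref{BP-vanish-order} with the multiplicities in the definition~\eqref{Zinf-def} of $Z^\infty(\a,m)$. The only difference is expository: you spell out the gluing of the local charts and the non-vanishing of the infinite product along the exceptional components, whereas the paper leaves these implicit.
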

\begin{proof}
Let $n \in \N$ be like in Proposition~\ref{power-of-BP-is-invariant}. Then $\Psi(\a,m,z)^n$ is invariant under $\Ginf$. This shows that $\Psi(\a,m,z)^n$ is well-defined on a punctured neighborhood of $\infty$ in $\XaB$ and holomorphic there. With Proposition~\ref{BP-vanish-order} we obtain that $\Psi(\a,m,z)^n$ is well-defined on $E^\infty(\a)$ as well, hence on a neighborhood of $E^\infty(\a)$ in $\XaH$, and that this extension is holomorphic.
With $\Psi(\a,m,z)^n$ being well-defined, of course also
\[
\log  \abs{\Psi(\a,m,z)}^2 = \frac{1}{n} \log \abs{\Psi(\a,m,z)^n}^2
\]
is well-defined.
Now, we come to prove the stated logarithmic singularities. For this we have to show that the divisor of the holomorphic function $\Psi(\a,m,z)^n$ agrees with $n (T^\infty(\a,m)+Z^\infty(\a,m))$.
By Proposition~\ref{power-of-BP-is-invariant} the function $\Psi(\a,m,z)^n$ vanishes of order $n$ at $T^\infty(\a,m)$ in $\Xa$. By Proposition~\ref{BP-vanish-order} the divisor $n Z^\infty(\a,m)$ provides the correct multiplicities for the vanishing of $\Psi(\a,m,z)^n$ along $E^\infty(\a)$. To see that, recall definition~\eqref{Zinf-def} of $Z^\infty(\a,m)$ with $(\alpha,\beta):=(A_{k-1},A_k)$ to realize that the multiplicities of the components $S_k$ of $nZ^\infty(\a,m)$ are precisely defined to match the multiplicites of the zeros of $\Psi(\a,m,z)^n$ along $S_k$.
\end{proof}

\subsection{Growth analysis}
In this subsection we prove that the regularized automorphic Green functions $\Phi(\a,m,z)$ are actual Green functions, i.e., $\Phi(\a,m,z)$ is a pre-log-log Green function on $\XaH$ for the divisor $Z(\a,m)$. On $\Xa$ this is already clear because $\Phi(\a,m,z)$ has logarithmic singularties along $-T(\a,m)$ and is elsewhere smooth, even real analytic. On the cusps however, $\Phi(\a,m,z)$ is not smooth anymore, even after subtracting the logarithmic singularities and the log-log growth of $- q(\a,m)\log(16 \pi^2 y_1y_2)$.

We start with three lemmata which are straight forward to prove using Remark~\ref{pre-log-log-condition}.
\begin{lemma} \label{ddc-for-absolute-powers} 
Let $a_1,a_2, b_1,b_2 \in \Z$ and  $\alpha,\beta \in \R$ with
\[
\alpha + a_1 + a_2 > 0
\und
\beta + b_1 + b_2 > 0.
\]
Then the function
\[
f: (\C^\times)^2 \to \C,\quad
f(u,v) = u^{a_1}\ub^{a_2} |u|^\alpha \cdot v^{b_1}\vb^{b_2} |v|^\beta
\]
is a pre-log-log growth form along $uv=0$.
\end{lemma}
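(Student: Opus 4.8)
The plan is to verify directly the three conditions of Remark~\ref{pre-log-log-condition}, checking that each of the relevant expressions not only has log-log growth but in fact tends to $0$ as $(u,v) \to 0$. First I would reduce to a single variable: since $f$ factors as $f(u,v) = g(u) h(v)$ with $g(u) = u^{a_1}\ub^{a_2}|u|^\alpha$ and $h(v) = v^{b_1}\vb^{b_2}|v|^\beta$, and since the partial derivatives respect this product structure, all three conditions follow from the corresponding one-variable estimates for $g$ (and symmetrically $h$). Concretely, writing $|u|^\alpha = (u\ub)^{\alpha/2}$, we have $|g(u)| = |u|^{\alpha + a_1 + a_2}$, which tends to $0$ as $u \to 0$ precisely because $\alpha + a_1 + a_2 > 0$; this handles the first condition $f \to 0$.

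For the second condition I would compute $\del g/\del u$. Treating $g$ as $u^{a_1 + \alpha/2}\ub^{a_2+\alpha/2}$ (choosing a branch; the final modulus estimate is branch-independent), we get $u \,\del g/\del u = (a_1 + \alpha/2)\, g(u)$, so
\[
\abs{u \log(|u|) \frac{\del g}{\del u}} = \abs{a_1 + \tfrac{\alpha}{2}}\, \abs{\log|u|}\, |u|^{\alpha + a_1 + a_2},
\]
which again tends to $0$ as $u \to 0$ since a logarithm is dominated by any positive power; together with the boundedness of $h$ near $v = 0$ (for $w_1 \in \{u,\ub\}$) or the symmetric statement, this gives log-log growth of the mixed expressions as well. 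For the third condition, with $w_1 \in \{u,\ub\}$ and $w_2 \in \{v,\vb\}$ the second partial derivative again factors, $\del^2 f/\del w_1 \del w_2 = (\del g/\del w_1)(\del h/\del w_2)$, and the same computation yields
\[
\abs{w_1 w_2 \log(|u|)\log(|v|)\frac{\del^2 f}{\del w_1 \del w_2}} = C\, \abs{\log|u|}\,\abs{\log|v|}\, |u|^{\alpha+a_1+a_2} |v|^{\beta+b_1+b_2} \longrightarrow 0
\]
for a constant $C$ depending only on $a_1,a_2,b_1,b_2,\alpha,\beta$. (If $w_1 = u$ and $w_2 = \ub$, or both lie over the same variable, the derivative in the other variable is zero and the expression vanishes identically, so nothing is to check.)

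There is no real obstacle here; the lemma is genuinely routine once one observes the product factorization and that $t^c |\log t| \to 0$ as $t \to 0^+$ for every $c > 0$. The only mild care needed is in stating the branch choice for $u^{a_1}\ub^{a_2}|u|^\alpha$ when $\alpha \notin 2\Z$: one fixes any branch of $\log u$ and sets $\log\ub := \overline{\log u}$ as in Lemma~\ref{exponentials-in-local}, so that $|u|^\alpha = \exp(\tfrac{\alpha}{2}(\log u + \log\ub))$ is unambiguous, and all the modulus estimates above are independent of that choice. I would state the one-variable estimate as the crux and then simply remark that the two-variable conditions of Remark~\ref{pre-log-log-condition} follow by multiplying the one-variable bounds.
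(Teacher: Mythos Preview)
Your approach is correct and is exactly what the paper intends: it simply states that the lemma is ``straight forward to prove using Remark~\ref{pre-log-log-condition}'' without giving details, and your verification via the product factorization $f = g(u)h(v)$ together with the elementary limit $t^c|\log t|^k \to 0$ for $c>0$ is precisely that.

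One small slip: your parenthetical claim that the expression vanishes identically when $w_1 = u$, $w_2 = \ub$ (``both lie over the same variable'') is not correct. Writing $g(u) = u^{a_1+\alpha/2}\ub^{a_2+\alpha/2}$, one has
\[
u\ub\,\frac{\partial^2 g}{\partial u\,\partial\ub} = \br{a_1+\tfrac{\alpha}{2}}\br{a_2+\tfrac{\alpha}{2}}\,g(u),
\]
which is generically nonzero. The fix is immediate: the same estimate gives
\[
\abs{u\,\ub\,\log|u|\,\log|\ub|\;\frac{\partial^2 f}{\partial u\,\partial\ub}} = C'\,(\log|u|)^2\,|u|^{\alpha+a_1+a_2}\,|h(v)| \longrightarrow 0,
\]
and symmetrically for $(v,\vb)$. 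So this case is handled by the very same one-variable bound; just correct the parenthetical.
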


\begin{lemma} \label{ddc-for-log-one-minusabsolute-powers} 
Let $a_1,a_2, b_1,b_2 \in \Z$ and  $\alpha,\beta \in \R$ with
\[
\alpha + a_1 + a_2 > 0
\und
\beta + b_1 + b_2 > 0.
\]
Then the function
\[
f: (\C^\times)^2 \to \C,\quad
f(u,v) = \log \abs{1- u^{a_1}\ub^{a_2} |u|^\alpha \cdot v^{b_1}\vb^{b_2} |v|^\beta}^2
\]
is a pre-log-log growth form along $uv=0$.
\end{lemma}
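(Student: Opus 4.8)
The plan is to verify the criterion of Remark~\ref{pre-log-log-condition} directly, with $k=2$ and local coordinates $(u,v)$, and in fact to show that every expression appearing there tends to $0$ as one approaches the divisor $uv=0$, which is stronger than log-log growth. First I would abbreviate $g := u^{a_1}\ub^{a_2}|u|^\alpha \cdot v^{b_1}\vb^{b_2}|v|^\beta$, so that $f=\log\abs{1-g}^2$, and record the key consequence of the hypotheses: $\abs{g}=|u|^{a_1+a_2+\alpha}\,|v|^{b_1+b_2+\beta}$ is a strictly positive power of $|u|$ times a strictly positive power of $|v|$. Hence $\abs{g}\to 0$ near any point of $\set{uv=0}$, and on a sufficiently small punctured neighborhood $U$ of such a point one has $\abs{g}\le\tfrac12$, so that $\abs{1-g}\ge\tfrac12$ and $f=2\Re\log(1-g)$ for the principal branch of the logarithm. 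In particular $f$ is bounded on $U$ (indeed $f\to0$), so it has log-log growth.

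Next I would compute the first and mixed second derivatives in closed form. On $(\C^\times)^2$ the function $g$ is smooth with $\del g/\del u=(a_1+\tfrac{\alpha}{2})\,g/u$ and $\del g/\del\ub=(a_2+\tfrac{\alpha}{2})\,g/\ub$, and analogously in $v$; that is, $\del_w g=c_w\,g/w$ for each $w\in\set{u,v,\ub,\vb}$ with a real constant $c_w$ depending only on $a_1,a_2,b_1,b_2,\alpha,\beta$. Using $\del_w\log(1-g)=-(\del_w g)/(1-g)$ together with $\log(1-\bar g)=\overline{\log(1-g)}$, this gives for $w\in\set{u,v,\ub,\vb}$
\[
w\log\abs{w}\,\frac{\del f}{\del w}=\log\abs{w}\br{c_w\,\frac{g}{1-g}+c_w'\,\frac{\bar g}{1-\bar g}},
\]
and for $w_1\ne w_2$ in $\set{u,v,\ub,\vb}$
\[
w_1w_2\log\abs{w_1}\log\abs{w_2}\,\frac{\del^2 f}{\del w_1\del w_2}=\log\abs{w_1}\log\abs{w_2}\br{c\,\frac{g}{(1-g)^2}+c'\,\frac{\bar g}{(1-\bar g)^2}},
\]
with real constants $c,c'$. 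The case $w_1=u,\,w_2=\ub$ is included and is handled by exactly the same computation as the case of two genuinely distinct variables.

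Finally I would conclude by estimating. Since $\abs{1-g}\ge\tfrac12$ on $U$, the bracketed expressions above are bounded by a constant times $\abs{g}=|u|^{a_1+a_2+\alpha}|v|^{b_1+b_2+\beta}$; and because $t^\delta\abs{\log t}^N\to0$ as $t\to0^+$ for every $\delta>0$ and $N\in\N_0$, multiplying by $\log\abs{u}$ and $\log\abs{v}$ leaves the decay intact. Hence both displayed quantities, together with $f$ itself, tend to $0$ as one approaches $\set{uv=0}$; in particular each is bounded, so each has log-log growth, and $f$ is a pre-log-log growth form along $uv=0$. I do not expect a real obstacle here: the only care needed is the elementary bookkeeping that $\del_w g=c_w\,g/w$ holds uniformly in $w\in\set{u,v,\ub,\vb}$, so no pair $(w_1,w_2)$ is harder than another. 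As an alternative route one could expand $\log(1-g)=-\sum_{k\ge1}g^k/k$, which converges geometrically on $U$, observe that the exponents of $g^k$ still satisfy the two positivity hypotheses, and invoke Lemma~\ref{ddc-for-absolute-powers} term by term; but the direct computation above is shorter and makes the $\to0$ behaviour transparent.
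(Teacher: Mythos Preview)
Your proposal is correct and follows exactly the approach the paper indicates: the paper gives no detailed proof, stating only that the lemma is ``straight forward to prove using Remark~\ref{pre-log-log-condition}'', and you have carried out precisely this verification, in fact showing the stronger conclusion that all the quantities in that remark tend to~$0$ near $uv=0$.
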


\begin{lemma} \label{green-equation-for-log-t} 
Let $(\alpha,\beta)$ be a totally positive basis of $\a^{-1}$. The $\a$ invariant function
\[
f:\H^2 \to \C,\quad
f(z):=\log(y_1y_2)\]
expressed in local coordinates $(u,v)$ with respect to $(\alpha,\beta)$ is a pre-log-log growth form along $uv=0$.
\end{lemma}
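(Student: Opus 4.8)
The plan is to express $\log(y_1 y_2)$ explicitly in the local coordinates $(u,v)$ and then verify the three pre-log-log conditions of Remark~\ref{pre-log-log-condition} by direct estimation. First I would use the defining relation~\eqref{local-to-z}, namely $2\pi i z_j = \alpha^{(j)}\log u + \beta^{(j)}\log v$ (where $\alpha^{(1)}=\alpha$, $\alpha^{(2)}=\alpha'$, etc.), to read off
\[
2\pi y_1 = -\alpha \log|u| - \beta \log|v|,
\qquad
2\pi y_2 = -\alpha' \log|u| - \beta' \log|v|.
\]
Since $(\alpha,\beta)$ is a totally positive basis, both coefficient pairs are positive, so near $uv=0$ (where $|u|,|v|<1$, i.e. $\log|u|,\log|v|<0$) both $y_1$ and $y_2$ are positive combinations of $-\log|u|=|\log|u||$ and $-\log|v|$. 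Thus
\[
\log(y_1 y_2) = \log\bigl(-\alpha\log|u|-\beta\log|v|\bigr) + \log\bigl(-\alpha'\log|u|-\beta'\log|v|\bigr) - \log(4\pi^2),
\]
and each summand is of the form $\log\bigl(P(-\log|u|,-\log|v|)\bigr)$ for a linear form $P$ with positive coefficients.

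Next I would check the log-log growth of $f$ itself: on $\{|u|,|v|<1\}$ we have $0 < -\alpha\log|u|-\beta\log|v| \le C(-\log|u| - \log|v|) \le C(-\log|u|)(-\log|v|)$ once $|u|,|v|$ are small (using $-\log|u|\ge 1$, say), and similarly a lower bound $\ge c\,\max(-\log|u|,-\log|v|) \ge c$, so $|\log(y_1y_2)|$ is bounded by a constant times $\log\bigl((-\log|u|)(-\log|v|)\bigr)$ plus a constant, which is exactly log-log growth. For the first derivative term, note $\dfrac{\partial}{\partial u}\log\bigl(-\alpha\log|u|-\beta\log|v|\bigr) = \dfrac{-\alpha/(2u)}{-\alpha\log|u|-\beta\log|v|}$, so
\[
u\log|u|\,\frac{\partial f}{\partial u}
= -\tfrac12\log|u|\left(\frac{\alpha}{-\alpha\log|u|-\beta\log|v|} + \frac{\alpha'}{-\alpha'\log|u|-\beta'\log|v|}\right),
\]
and since $-\alpha\log|u|-\beta\log|v| \ge -\alpha\log|u|$, each fraction is bounded by $\alpha^{(j)}/(-\alpha^{(j)}\log|u|) = 1/(-\log|u|)$; hence the whole expression is bounded by a constant, in fact tends to $0$ as $u\to 0$ (and is trivially bounded, hence log-log, in $v$). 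The mixed second derivative is handled the same way: differentiating once more in $v$ produces terms with an extra factor $(-\alpha\log|u|-\beta\log|v|)^{-1}$ in the denominator, and after multiplying by $u v \log|u|\log|v|$ one again bounds each piece using the totally-positive lower bounds $-\alpha\log|u|-\beta\log|v| \ge \max(-\alpha\log|u|, -\beta\log|v|)$, so the expression is bounded (indeed $\to 0$). The cases where $w_1$ or $w_2$ is a conjugate variable $\overline u$ or $\overline v$ are identical since $\log|u| = \tfrac12\log(u\overline u)$ is symmetric.

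The only mild obstacle is bookkeeping: one must treat each of the choices $w_1,w_2\in\{z_1,z_2,\overline{z_1},\overline{z_2}\}$ (equivalently, after the coordinate change, each partial $\partial/\partial u$, $\partial/\partial\overline u$, $\partial/\partial v$, $\partial/\partial\overline v$) and confirm that the denominators never degenerate — this is precisely where total positivity of $(\alpha,\beta)$ is used, guaranteeing that every linear form appearing in a denominator stays bounded below by a positive multiple of $-\log|u|$ (or of $-\log|v|$, or of their max), so no cancellation can occur. Once that observation is in place, all the required expressions are $O(1)$ near $uv=0$ — in fact they vanish in the limit, as Remark~\ref{pre-log-log-condition} anticipates — so $f$ is a pre-log-log growth form along $uv=0$.
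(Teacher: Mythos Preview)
Your proof is correct and is precisely the direct verification via Remark~\ref{pre-log-log-condition} that the paper has in mind (the paper omits the argument entirely, calling it straightforward). One minor point: the term $u\bar u\,(\log|u|)^2\,\partial^2 f/\partial u\,\partial\bar u$ is bounded but does not actually tend to $0$ as $u\to 0$ with $v$ fixed (it approaches a nonzero constant), so your closing remark that all expressions vanish in the limit slightly overstates things---but boundedness already gives log-log growth, so the argument stands.
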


\begin{theorem} \label{Phi-is-green} 
The function $\Phi(\a,m,z)$ is a pre-log-log Green function on $\XaH$ with respect to the divisor $Z(\a,m)$.
\end{theorem}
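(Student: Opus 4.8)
The plan is as follows. On $\Xa$ away from the cusps the assertion is already available: after Definition~\ref{regularized-Green-def} we saw that $\Phi(\a,m,z)$ has logarithmic singularities along $-T(\a,m)$, and by Proposition~\ref{Phi-real-analytic} it is real analytic on the rest of $\Xa$. Thus everything left takes place in a neighbourhood of the exceptional divisor $E(\a)\se\XaH$. By the isomorphism \eqref{M-iso}, the transformation law of Proposition~\ref{first-Phi-prop} and the definition of $Z^\kappa(\a,m)$ as the image of $Z^\infty(\a\b^2,m)$, the analysis near $E^\kappa(\a)$ at an arbitrary cusp $\kappa$ is a relabelling of the analysis near $E^\infty$ for the Green function attached to the ideal $\a\b^2$; hence it suffices to treat the cusp~$\infty$ for all ideals at once.

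First I fix a totally positive basis $(\alpha,\beta)$ of $\a^{-1}$ and pass to the local coordinates $(u,v)$ of \eqref{local-to-z}. From \eqref{local-to-z} one sees that $(u,v)\to 0$ forces $y_1,y_2\to\infty$, so a small enough neighbourhood of $E^\infty(\a)$ lies inside the region $\Im(z)>m/(DN(\a))$ on which Theorem~\ref{nice-Phi-rep} holds. I then split $\Phi(\a,m,z)$ along that Fourier expansion into: (i) the constant $L(\a,m)$; (ii) $-q(\a,m)\log(16\pi^2 y_1y_2)$; (iii) the numerator product $2\log\prod_{\lambda\in\Lambda^+(\a,m)}\bigl|1-e(|\lambda|z_1)\overline{e(|\lambda'|z_2)}\bigr|$; (iv) the two Bessel sums over $\nu\in\a\d^{-1}$; and finally the denominator product in Theorem~\ref{nice-Phi-rep}, which, since $|\Psi(\a,m,z)|=\prod_{\lambda\in\Lambda^+(\a,m)}|\psi_\lambda(z)|$ with $\psi_\lambda(z)=e(|\lambda|z_1)-e(|\lambda'|z_2)$, equals $-\log|\Psi(\a,m,z)|^2$. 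Part (i) is constant, and part (ii) is a constant plus $-q(\a,m)\log(y_1y_2)$, which is a pre-log-log growth form by Lemma~\ref{green-equation-for-log-t}.

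For parts (iii) and (iv) I exploit that $\lambda\in\Lambda^+(\a,m)$ has $N(\lambda)=-mN(\a)/D<0$, hence $\lambda>0>\lambda'$, so $|\lambda|=\lambda$ and $|\lambda'|=-\lambda'$. Using $\overline{e(-\lambda'z_2)}=e(\lambda'\overline{z_2})$ together with Lemma~\ref{exponentials-in-local} one obtains, in local coordinates, $e(|\lambda|z_1)\overline{e(|\lambda'|z_2)}=u^{\tr(\alpha\lambda)}|u|^{-2\alpha'\lambda'}v^{\tr(\beta\lambda)}|v|^{-2\beta'\lambda'}$, with $\tr(\alpha\lambda),\tr(\beta\lambda)\in\Z$ by trace duality of $\a^{-1}$ and $\a\d^{-1}$, and with total exponents $\tr(\alpha\lambda)-2\alpha'\lambda'=\alpha\lambda-\alpha'\lambda'$ and $\tr(\beta\lambda)-2\beta'\lambda'=\beta\lambda-\beta'\lambda'$ that are strictly positive because $\alpha,\beta$ are totally positive and $\lambda>0>\lambda'$. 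Hence each factor $\log|1-e(|\lambda|z_1)\overline{e(|\lambda'|z_2)}|^2$ is a pre-log-log growth form along $uv=0$ by Lemma~\ref{ddc-for-log-one-minusabsolute-powers}, and no such factor vanishes on $\H^2$ since $\lambda y_1-\lambda'y_2>0$; therefore (iii) carries no divisorial singularity. In the same way, expressing each of the monomials $e(\tr(\nu z))$, $\overline{e(\tr(\nu z))}$, $e(\nu z_1)\overline{e(-\nu'z_2)}$ and $\overline{e(\nu z_1)}e(-\nu'z_2)$ occurring in (iv) in local coordinates by Lemma~\ref{exponentials-in-local}, one checks that each is a monomial in $u,\ub,v,\vb$ with strictly positive total $u$- and $v$-exponents, hence a pre-log-log growth form by Lemma~\ref{ddc-for-absolute-powers}, and none of them becomes $1$ near $E^\infty(\a)$, so (iv) carries no divisorial singularity either. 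To pass from these termwise statements to the infinite product (iii) and the infinite Bessel sums (iv), I use that the relevant $u$- and $v$-exponents grow — geometrically along the finitely many $(\OK^\times)^2$-orbits of $\Lambda^+(\a,m)$, respectively linearly in $\nu\in\a\d^{-1}$ — giving exponential decay of the corresponding monomials near $E^\infty(\a)$; combined with the classical estimates for $I_1$ and $J_1$ and standard bounds for the exponential sums $G^b(\a,m,\nu)$, this makes the series of the corresponding log-log-type majorants, as well as those for the first and second mixed $w\log|w|$-derivatives occurring in Remark~\ref{pre-log-log-condition}, converge near $E^\infty(\a)$; hence (iii) and (iv) are pre-log-log growth forms.

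Finally, Corollary~\ref{Bp-log-sing} gives that $-\log|\Psi(\a,m,z)|^2$ is well-defined in a neighbourhood of $E^\infty(\a)$ in $\XaH$, is real analytic off its singular locus, and has logarithmic singularities along $-(T^\infty(\a,m)+Z^\infty(\a,m))$, which near the cusp~$\infty$ coincides with $-Z(\a,m)$. Assembling the pieces, in local coordinates near $E^\infty(\a)$ we obtain $\Phi(\a,m,z)=\bigl(\text{a pre-log-log growth form}\bigr)-\log|\Psi(\a,m,z)|^2$, which exhibits $\Phi(\a,m,z)$ as a pre-log-log Green function for $Z(\a,m)$ near $E^\infty(\a)$; transporting along \eqref{M-iso} yields the same near every $E^\kappa(\a)$, and together with the behaviour on $\Xa$ recalled above this proves the theorem. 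I expect the main obstacle to be the last step of the third paragraph: making the convergence of the infinite product and the infinite Bessel sums — including that of the derivatives entering the pre-log-log condition — uniform enough near $E^\infty(\a)$, which is precisely where the growth of the exponents along the $(\OK^\times)^2$-orbits and the size estimates for Bessel functions and for the exponential sums $G^b(\a,m,\nu)$ are needed.
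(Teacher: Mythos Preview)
Your proposal is correct and follows essentially the same route as the paper: reduce to the cusp $\infty$ via \eqref{M-iso} and Proposition~\ref{first-Phi-prop}, split $\Phi(\a,m,z)$ according to Theorem~\ref{nice-Phi-rep} into the constant, the $\log(y_1y_2)$-term, the numerator product, the two Bessel sums, and $-\log|\Psi(\a,m,z)|^2$, then handle the first five pieces with Lemmata~\ref{green-equation-for-log-t}, \ref{ddc-for-absolute-powers}, \ref{ddc-for-log-one-minusabsolute-powers} and the last with Corollary~\ref{Bp-log-sing}. If anything, you are slightly more careful than the paper in one place: the paper applies Lemmata~\ref{ddc-for-absolute-powers} and~\ref{ddc-for-log-one-minusabsolute-powers} termwise to $f_4$ and $f_5$ without commenting on the passage to the infinite sum/product, whereas you flag this step and indicate how the growth of the exponents along the $(\OK^\times)^2$-orbits and in $\nu$ provides the needed uniform control.
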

\begin{proof}
As already mentioned in the beginning of the subsection we do not have to care about $\Xa$ anymore.
Therefore, the focus of this proof lies on the cusps. Because of the isomorphism
$\overline{X(\a\b^2)} \isoArrow \XaH$
together with
\[ \Phi(\b,m,s,Mz) = \Phi(\a^2\b,m,s,z) \]
for $M \in \SLM{\a}{\b}$ by Proposition~\ref{first-Phi-prop} it is enough to consider the cusp~$\infty$.
We write
\[
\Phi(\a,m,z) = f_1(z) + f_2(z) + f_3(z) + f_4(z) + f_5(z) + f_6(z)
\]
near the cusp~$\infty$ as sum of six parts according to Theorem~\ref{nice-Phi-rep}.
We show that the functions $f_j$ with $j \in \set{1,2,3,4,5}$ are pre-log-log growth forms along $E^\infty(\a)$ and that $f_6$ has logarithmic singularities along the divisor $-(T^\infty(\a,m)+Z^\infty(\a,m))$. Note that the divisor $T^\infty(\a,m)+Z^\infty(\a,m)$ is the part of $Z(\a,m)$ in small neighborhoods of $E^\infty(\a)$.
For proving the pre-log-log growth we express $f_j$ in local coordinates $(u,v)$ with respect to a totally positive basis $(\alpha,\beta)$ of $\a^{-1}$.
Let us make our decomposition of $\Phi(\a,m,z)$ precise:
\begin{align*}
f_1(z) &:= L(\a,m),\\
f_2(z) &:= - q(\a,m)\log(16 \pi^2 y_1y_2),\\
f_3(z) &:= \sum_{\substack{\nu \in \a\d^{-1} \\ \nu \gg 0 }} \frac{2 \pi}{D} \sqrt{ \frac{mN(\a)}{|N(\nu)|} } \sum_{b=1}^\infty  \frac{G^b(\a,m,\nu)}{b} I_{1} \br{ \frac{4 \pi}{b} \sqrt{ \frac{m |N(\nu)|}{N(\a)D}}}\\
&\times \ \br{e(\tr(\nu z)) + \overline{e(\tr(\nu z))}},\\
f_4(z) &:= \sum_{\substack{\nu \in \a\d^{-1} \\ \nu >0, \, \nu'<0 }} \frac{2 \pi}{D} \sqrt{ \frac{mN(\a)}{|N(\nu)|} } \sum_{b=1}^\infty  \frac{G^b(\a,m,\nu)}{b} J_{1} \br{ \frac{4 \pi}{b} \sqrt{ \frac{m |N(\nu)|}{N(\a)D}}}\\
&\times \  \br{ e(\nu z_1)\overline{ e(-\nu'z_2) } + \overline{e(\nu z_1)} e(-\nu'z_2) },\\
f_5(z) &:= \log \prod_{\lambda \in \Lambda^+(\a,m)} \abs{ 1- e(|\lambda| z_1) \overline{ e(|\lambda'| z_2)}}^2
= \sum_{\lambda \in \Lambda^+(\a,m)} \log \abs{ 1- e(\lambda z_1) \overline{ e(-\lambda' z_2)}}^2,\\
f_6(z) &:=  -\log \prod_{\lambda \in \Lambda^+(\a,m)} \abs{ e(|\lambda| z_1) -e(|\lambda'| z_2) }^2 = -\log \abs{\Psi(\a,m,z)}^2.
\end{align*}
The function $f_1$ is constant, hence it is a pre-log-log growth form.
The function $f_2$ was considered (up to constants) in Lemma~\ref{green-equation-for-log-t}.
The function $f_3$ is real analytic even at $uv=0$ because of
\begin{align*}
e(\tr(\nu z)) = u^{\tr(\alpha \nu)} v^{\tr(\beta \nu)}
\und
\overline{e(\tr(\nu z))} = \overline{u}^{\tr(\alpha \nu)} \overline{v}^{\tr(\beta \nu)}
\end{align*}
by Lemma~\ref{exponentials-in-local}. Note that $\tr(\alpha \nu),\tr(\beta\nu) \in \N$. Hence, it is a pre-log-log growth form.
Unfortunately, the function $f_4$ is not even differentiable at $uv=0$ but at least continuous. We have by Lemma~\ref{exponentials-in-local}
\begin{align*}
e(\nu z_1)\overline{ e(-\nu'z_2) }
&= u^{\alpha \nu} \ub^{-\alpha'\nu'} v^{\beta \nu} \vb^{-\beta'\nu'}\\
&= u^{\tr(\alpha \nu)} |u|^{-2 \alpha'\nu'} v^{\tr(\beta \nu)} |v|^{-2 \beta'\nu'}
\end{align*}
and
\[
\overline{e(\nu z_1)} e(-\nu'z_2)
=\ub^{\tr(\alpha \nu)} |u|^{-2 \alpha'\nu'} \vb^{\tr(\beta \nu)} |v|^{-2 \beta'\nu'}.
\]
The advantage of having integer powers on $u$, $\ub$, $v$ and $\vb$ is that it is well-defined without specifying a branch of the logarithm.
Since $\nu>0$ and $\nu'<0$, we have
\[
\tr(\alpha \nu)-2 \alpha'\nu' = \alpha \nu - \alpha'\nu' > 0
\und
\tr(\beta \nu)-2 \beta'\nu' = \beta \nu - \beta
\nu'> 0.
\]
Hence, the claim for $f_4$ follows by Lemma~\ref{ddc-for-absolute-powers}.
Considering $f_5$, we see that we can write each summand in local coordinates using the same identity and get
\[
\log \abs{ 1- e(\lambda z_1) \overline{ e(-\lambda' z_2)}}^2 = 
\log \abs{ 1- u^{\tr(\alpha \lambda)} |u|^{-2 \alpha'\lambda'} v^{\tr(\beta \lambda)} |v|^{-2 \beta'\lambda'}  }^2.
\]
Because of $\lambda>0$ and $\lambda'<0$ we can apply Lemma~\ref{ddc-for-log-one-minusabsolute-powers} to achieve the claim for $f_5$.
Now, we are left with
\[
f_6(z) = -\log \abs{\Psi(\a,m,z)}^2
\]
for which we have proven the claim already in Corollary~\ref{Bp-log-sing}.
\end{proof}

\section{Smooth decomposition of automorphic Green functions} \label{decomp-section} 

\subsection{A valuable representation using the hypergeometric function} 

In this subsection we follow the idea (for example present in \cite{bruinier2021cm}) to express $\Phi(\a,m,s,z)$ using the Gaussian hypergeometric function $\hF(a,b;c;z)$.
This yields a valuable decomposition
\[
\Phi(\a,m,s,z) = \sum_{n=0}^\infty \Phi_n(\a,m,s,z)
\]
into smooth, $\Ga$ invariant functions $\Phi_n(\a,m,s,z)$. Using this decomposition, a lot of already known results about $\Phi(\a,m,s,z)$ and $\Phi(\a,m,z)$ can be reproven.
Some of those proofs reveal new perspectives on the old results.
For example, computing the Fourier expansions of the functions $\Phi_n(\a,m,s,z)$ yields new formulae for the Fourier coefficients of $\Phi(\a,m,s,z)$.
However, the motivation for the author to look at this decomposition was to prove the integrability of $\Phi(\a,m,z)$ and understand the growth behavior of
\[
\int_{\Xa} |\Phi(\a,m,z)| \omega^2
\]
for large $m$ which is essential for the main result of \cite{buckdiss}. Those two results can be found in Corollary~\ref{Phi-integrable} and Theorem~\ref{Phi-integrable-estimate}. The main work towards these theorems is done in the current section.

The main ingredient in Definition~\ref{Phi-amsz-def} of $\Phi(\a,m,s,z)$ is $Q_{s-1}(x)$, the Legendre function of the second kind. This however has the nice representation
\begin{align} \label{Qs-as-hyper} 
Q_{s-1}(x) = \frac{\Gamma(s)^2}{2\Gamma(2s)} \br{ \frac{2}{1+x} }^s \hF\br{s,s;2s;\frac{2}{1+x}}
\end{align}
using the hypergeometric function $\hF(a,b;c;z)$
which follows from \cite[14.3.7 and 15.8.13]{handbook} together with the Legendre duplication formula. The hypergeometric function again is defined by its power series
\begin{align} \label{hyper-power} 
\begin{split}
\hF(a,b;c;z)
:= \sum_{n=0}^\infty \frac{(a)_n (b)_n}{(c)_n} \frac{z^n}{n!}
= \frac{\Gamma(c)}{\Gamma(a) \,\Gamma(b)} \sum_{n=0}^\infty \frac{ \Gamma(a+n) \Gamma(b+n) }{ \Gamma(c+n)} \frac{z^n}{n!}
\end{split}
\end{align}
which implies
\[
Q_{s-1}(x) = \frac{1}{2} \sum_{n=0}^\infty \frac{\Gamma(s+n)^2}{\Gamma(2s+n)} \frac{1}{n!} \br{\frac{2}{1+x}}^{n+s}.
\]
Plugged into Definition~\ref{Phi-amsz-def} we get
\[
\Phi(\a,m,s,z) =  \frac{1}{2}  \sum_{n=0}^\infty \frac{\Gamma(s+n)^2}{\Gamma(2s+n)} \frac{1}{n!} \sum_{ \substack{ A \in L(\a)^\vee \\ \det(A) = m/(N(\a)D) }  }  \br{1+g(A,z)}^{-(n+s)}.
\]
Defining
\begin{align} \label{Psi-def} 
\Psi(\a,m,s,z) :=  \sum_{ \substack{ A \in L(\a)^\vee \\ \det(A) = m/(N(\a)D) }  }  \br{1+g(A,z)}^{-s},
\end{align}
we obtain
\[
\Phi(\a,m,s,z) =  \sum_{n=0}^\infty \Phi_n(\a,m,s,z)
\quad\text{with}\quad
\Phi_n(\a,m,s,z) := \frac{\Gamma(s+n)^2}{\Gamma(2s+n)} \frac{\Psi(\a,m,s+n,z)}{2n!}.
\]
The convergence of $\Psi(\a,m,s,z)$ for $\Re(s)>1$ follows directly from the convergence of $\Phi(\a,m,s,z)$. Here, $\Psi(\a,m,s,z)$ is even well-defined for $z \in T(\a,m)$ and smooth in $z$ since $(1+x)^{-s}$ has no singularity at $x=0$. Furthermore, $\Psi(\a,m,s,z)$ is holomorphic in $s$.

It follows that the functions $\Phi_n(\a,m,s,z)$ are holomorphic in $s$, $\Ga$ invariant and smooth in $z$ on $\H^2$ for $\Re(s)>1-n$. Inductively, one can show that for all $N \in \N_0$
\[
\sum_{n=N}^\infty \Phi_n(\a,m,s,z)
\]
converges for $\Re(s)>1-N$ to a $\Ga$ invariant and smooth function on $\H^2 \setminus T(\a,m)$ which is holomorphic in $s$ (in particular $N=1$ implies convergence for $\Re(s)>0$). By Theorem~\ref{Phi-cont-theorem} we know that $\Phi(\a,m,s,z)$ has a meromorphic extension to $\Re(s)>3/4$ for $z \in \H^2 \setminus T(\a,m)$ with simple pole at $s=1$ of residue $q(\a,m)$. It follows that $\Phi_0(\a,m,s,z)$ has a meromorphic extension to $\Re(s)>3/4$ with simple pole at $s=1$ of residue $q(\a,m)$.
We define
\[
\Phi_0(\a,m,z) := \mathcal{C}_{s=1} \sq{\Phi_0(\a,m,s,z)}
\]
and get
\[
\Phi(\a,m,z) = \Phi_0(\a,m,z) + \sum_{n=1}^\infty \Phi_n(\a,m,1,z).
\]

\subsection{Fourier expansion of the decomposition} \label{fc-decomp} 
We proceed analogously to Subsection~\ref{fourier-exp-and-reg} and write
\[
\Psi(\a,m,s,z)
= \Psi^0(\a,m,s,z) + 2\sum_{b=1}^\infty \Psi^b(\a,m,s,z)
\]
with
\[
\Psi^b(\a,m,s,z) :=\sum_{ \substack{ A =\sMatrix{a}{\lambda'}{\lambda}{b} \in L(\a)^\vee \\ \det(A) = m/(N(\a)D) }  }  \br{1+g(A,z)}^{-s}.
\]
The functions $\Psi^b(\a,m,s,z)$ are invariant under $\Ginf$ as $\Phi^b(\a,m,s,z)$ in Subsection~\ref{fourier-exp-and-reg}. Hence, they are $\a^{-1}$ periodic and possess a Fourier expansion. Again, we treat the cases $b=0$ and $b \in \N$ separately and start with $b \in \N$.
We have with $B :=m/(N(\a)Db^2)$ and $R^b(\a,m)$ defined as in Subsection~\ref{fourier-exp-and-reg}
\begin{align*}
\Psi^b(\a,m,s,z)
&= \sum_{ \substack{a \in \Z/N(\a),\:\lambda \in \a\d^{-1}/N(\a) \\ ab-N(\lambda) = m/(N(\a)D) } }  \br{1+ \frac{|bz_1z_2-\lambda z_1-\lambda'z_2+a|^2}{4y_1y_2 m/(N(\a)D)} }^{-s}\\
&= \sum_{ \substack{a \in \Z/N(\a),\:\lambda \in \a\d^{-1}/N(\a) \\ ab-N(\lambda) = m/(N(\a)D) } }  \br{1+ \frac{| (z_1-\lambda'/b)(z_2-\lambda/b) +B |^2}{4y_1y_2 B} }^{-s}\\
=\sum_{\lambda \in R^b(\a,m)} &\sum_{\mu \in \a^{-1}}
\br{1+ \frac{ \abs{ \br {z_1 + \mu +  \frac{\lambda'}{N(\a)b}} \br {z_2 + \mu' +  \frac{\lambda}{N(\a)b}} + B }^2}{4y_1y_2 B} }^{-s}.
\end{align*}
Hence, the problem is reduced to computing the Fourier expansion of the $\a^{-1}$ periodic function $\tilde H_s^B (\a^{-1},z)$ with
\begin{align*}
\tilde H_s^B (\b,z) := \sum_{\mu \in \b} \br{ 1 + \frac{|(z_1+\mu)(z_2+\mu')+B|^2}{4y_1y_2B} }^{-s}.
\end{align*}
Namely, let
\[
\tilde H_s^B (\b,z) = \sum_{\nu \in (\b\d)^{-1} } \tilde b_s^B(\b,\nu,y) e(\tr(\nu x))
\]
be the Fourier expansion of $\tilde H_s^B (\b,z)$. Then we have
\begin{align*}
\Psi^b(\a,m,s,z)
&= \sum_{\nu \in \a\d^{-1}} G^b(\a,m,\nu)  \tilde b_s^B(\a^{-1},\nu,y) e(\tr(\nu x))
\end{align*}
with $G^b(\a,m,\nu)$ defined as in equation~\eqref{gaus-sum}.
By Poisson summation the Fourier coefficients are then given by
\begin{align} \label{tilde-b-formula} 
\tilde b_s^B(\b,\nu,y) = \frac{1}{\vol(\b)} \int_{\R^2}  \br{ 1 + \frac{|z_1z_2+B|^2}{4y_1y_2B} }^{-s}  e(-\tr(\nu x)) dx_1 dx_2.
\end{align}
For $\nu \ne 0$, the double integral is too complicated to be solved explicitly. Only one of the integrals can be solved explicitly, for the second one the author did not come up with an explicit solution.
However, for our purpose it is enough to estimate $|\tilde b_1^B(\b,\nu,y)|$.
Nevertheless, for $\nu = 0$ an estimate of $\tilde b_1^B(\b,0,y)$ is not enough because the series
\begin{align} \label{meromorphic-tilde-b-series}
\sum_{b=1}^\infty G^b(\a,m,0)  \tilde b_s^{m/(N(\a)Db^2)}(\a^{-1},0,y) 
\end{align}
diverges at $s=1$. Rather, we have to determine $\tilde b_s^B(\b,0,y)$ explicitly to compute the meromorphic continuation at $s=1$ of \eqref{meromorphic-tilde-b-series} and extract (or estimate) the constant term.

\begin{lemma} \label{fc-b-tilde-upper-bound} 
Let $B>0$, $\b \in \IK$ and $\nu \in (\b\d)^{-1}$. Then we have (cf.~equation~\eqref{alpha-beta-def} for the definition of $\alpha(\cdot,\cdot)$)
\[
\abs {\tilde b_1^B(\b,\nu,y)} \le \frac{4 B \pi^2}{\vol(\b)} \exp(- 2\pi \alpha(\nu y_1,\nu'y_2)).
\]
\end{lemma}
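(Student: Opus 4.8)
The plan is to evaluate the Fourier integral \eqref{tilde-b-formula} at $s=1$ by performing one of the two real integrations explicitly and bounding the remaining one trivially. The starting point is the elementary identity
\[
1 + \frac{\abs{z_1z_2+B}^2}{4y_1y_2B} = \frac{\abs{\overline{z_1}z_2+B}^2}{4y_1y_2B},
\]
which one checks by expanding both sides, or reads off from \eqref{hA-frac} as the case $A=\sMatrix{B}{0}{0}{1}$ of $\det(A)+h(A,z)=q_{\tilde W_z}(A)$ divided by $\det(A)=B$. Substituting this into \eqref{tilde-b-formula} at $s=1$ turns the integrand into $4By_1y_2\,\abs{\overline{z_1}z_2+B}^{-2}e(-\tr(\nu x))$, and for fixed $x_2$ (respectively fixed $x_1$) the denominator $\abs{\overline{z_1}z_2+B}^2$ is a strictly positive quadratic polynomial in $x_1$ (respectively in $x_2$), so no singularities intervene.

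Next I would carry out the $x_1$-integration. Completing the square gives
\[
\abs{\overline{z_1}z_2+B}^2 = \abs{z_2}^2\br{x_1 + \frac{Bx_2}{\abs{z_2}^2}}^2 + \frac{\br{y_1\abs{z_2}^2+By_2}^2}{\abs{z_2}^2},
\]
so that, after the translation $x_1 \mapsto x_1 - Bx_2/\abs{z_2}^2$ and the standard Fourier transform $\int_\R (a^2t^2+c^2)^{-1}e(-\nu t)\,dt = \frac{\pi}{ac}e^{-2\pi\abs{\nu}c/a}$ (valid for $a,c>0$), one obtains
\[
\int_\R \frac{e(-\nu x_1)}{\abs{\overline{z_1}z_2+B}^2}\,dx_1 = \frac{\pi}{y_1\abs{z_2}^2+By_2}\, e\br{\frac{\nu Bx_2}{\abs{z_2}^2}}\,\exp\br{-2\pi\abs{\nu}\br{y_1 + \frac{By_2}{\abs{z_2}^2}}}.
\]
Only the resulting bound is needed: dropping the nonnegative term $By_2/\abs{z_2}^2$ from the exponent, the absolute value of the $x_1$-integral is at most $\pi\,e^{-2\pi\abs{\nu}y_1}/(y_1\abs{z_2}^2+By_2)$, uniformly in $x_2$. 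Plugging this into \eqref{tilde-b-formula} and using $\int_\R (y_1(x_2^2+y_2^2)+By_2)^{-1}\,dx_2 = \frac{\pi}{\sqrt{y_1y_2}\sqrt{y_1y_2+B}}$ gives
\[
\abs{\tilde b_1^B(\b,\nu,y)} \le \frac{4B\pi^2}{\vol(\b)}\cdot\frac{\sqrt{y_1y_2}}{\sqrt{y_1y_2+B}}\, e^{-2\pi\abs{\nu}y_1} \le \frac{4B\pi^2}{\vol(\b)}\, e^{-2\pi\abs{\nu}y_1}.
\]

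Finally I would observe that the integral in \eqref{tilde-b-formula} is unchanged if one simultaneously relabels the two integration variables and exchanges the parameter pairs $(y_1,\nu)$ and $(y_2,\nu')$; hence $\tilde b_1^B(\b,\nu,y)$ is equally obtained by integrating over $x_2$ first, which by the same argument yields $\abs{\tilde b_1^B(\b,\nu,y)} \le \frac{4B\pi^2}{\vol(\b)}\,e^{-2\pi\abs{\nu'}y_2}$. Taking the smaller of the two bounds produces the factor $e^{-2\pi\max(\abs{\nu}y_1,\abs{\nu'}y_2)} = e^{-2\pi\alpha(\nu y_1,\nu'y_2)}$ with $\alpha(\cdot,\cdot)$ as in \eqref{alpha-beta-def}, which is the assertion. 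The only step that is not entirely routine is the opening identity linearising the denominator of the integrand; after that the computation is a single one-dimensional Fourier transform followed by a crude estimate, and the maximum in the exponent is forced by the $z_1\leftrightarrow z_2$ symmetry rather than by any extra work.
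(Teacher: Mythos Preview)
Your proof is correct and follows essentially the same route as the paper: both rewrite $4y_1y_2B+\abs{z_1z_2+B}^2$ (which you phrase neatly as $\abs{\overline{z_1}z_2+B}^2$ via the majorant identity) as a quadratic in $x_1$, evaluate the $x_1$-integral as the Fourier transform of a Lorentzian, crudely bound the remaining $x_2$-integral, and invoke the $(y_1,\nu)\leftrightarrow(y_2,\nu')$ symmetry to get the maximum in the exponent. The only difference is that you retain the $By_2$ term in the denominator when bounding the $x_2$-integral, yielding the slightly sharper intermediate factor $\sqrt{y_1y_2}/\sqrt{y_1y_2+B}\le 1$, whereas the paper drops it immediately; the end result is identical.
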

\begin{proof}
We have to estimate the integral given by equation~\eqref{tilde-b-formula} at $s=1$:
\begin{align*}
\tilde b_1^B(\b,\nu,y) = 
& \frac{1}{\vol(\b)}  \int_{\R^2}  \br{ 1 + \frac{|z_1z_2+B|^2}{4y_1y_2B} }^{-1}  e(-\tr(\nu x)) dx_1 dx_2\\
= & \frac{4y_1y_2B}{\vol(\b)}  \int_{\R^2} \br{ 4y_1y_2B +|z_1z_2+B|^2 }^{-1}  e(-\tr(\nu x)) dx_1 dx_2.
\end{align*}
Now, using the identity
\[
4y_1y_2 B +|z_1z_2+B|^2
= |z_2|^2 \br{ \br{ x_1 + \frac{Bx_2}{|z_2|^2} }^2 + \br{y_1 + \frac{By_2}{|z_2|^2}}^2 }
\]
the double integral is given by
\begin{align*}
\int_{\R} |z_2|^{-2} \int_{\R}  \br{ \br{ x_1 + \frac{Bx_2}{|z_2|^2} }^2 + \br{y_1 + \frac{By_2}{|z_2|^2}}^2 }^{-1}  e(-\tr(\nu x)) dx_1 dx_2\\
= \int_{\R} |z_2|^{-2} \br { \int_{\R}  \br{ x_1^2 + a(y_1,z_2)^2 }^{-1}  e(-\nu x_1) dx_1 } e\br{\frac{\nu Bx_2}{|z_2|^2} -\nu x_2 }  dx_2
\end{align*}
with $a(y_1,z_2) =: y_1 + \frac{By_2}{|z_2|^2}$. Using \cite[p.~8, eq. (11)]{1954tables} (which holds for $\nu=0$ as well, even though this case is omitted in the reference), we get for the inner integral
\begin{align*}
\int_{\R}  \br{ x_1^2 + a(y_1,z_2)^2 }^{-1}  e(-\nu x_1) dx_1
=\ &2\int_{0}^\infty \br{ x_1^2 + a(y_1,z_2)^2 }^{-1}  \cos(2 \pi |\nu| x_1) dx_1\\
=\ &\pi \frac{\exp(- 2 \pi |\nu| a(y_1,z_2) )}{a(y_1,z_2)}.
\end{align*}
Coming back to our double integral, we estimate
\begin{align*}
&\abs {\int_{\R} |z_2|^{-2} \br{ \pi \frac{\exp(- 2 \pi |\nu| a(y_1,z_2) )}{a(y_1,z_2)} } e\br{\frac{\nu Bx_2}{|z_2|^2} -\nu x_2 }  dx_2}\\
\le\ &\pi \int_{\R} \frac{\exp(- 2 \pi |\nu| a(y_1,z_2) )}{a(y_1,z_2)|z_2|^2} dx_2\\
\le\ &\frac{\pi \exp(-2 \pi |\nu| y_1)}{y_1} \int_{\R} \frac{1}{x_2^2+y_2^2} dx_2
= \frac{\pi^2 \exp(-2 \pi |\nu| y_1)}{y_1y_2} .
\end{align*}
Hence, in total we have shown
\[
\abs {\tilde b_1^B(\b,\nu,y)} \le  \frac{4  B \pi^2}{\vol(\b)}  \exp(-2 \pi |\nu| y_1).
\]
For symmetry reasons we have
\[
\abs {\tilde b_1^B(\b,\nu,y)} \le  \frac{4  B \pi^2}{\vol(\b)}  \exp(-2 \pi |\nu'| y_2)
\]
as well which proves the claim.
\end{proof}

In order to compute $\tilde b_s^B(\b,0,y)$ explicitly, we use the following two lemmata.

\begin{lemma} \label{square-sum-to-s-integral} 
Let $a>0$ and $s \in \C$ with $\Re(s)>1/2$. Then we have
\[
\int_{\R} (x^2+a^2)^{-s} dx = a^{1-2s} \Beta(\tfrac{1}{2},s-\tfrac{1}{2}).
\]
Here, by $\Beta(x,y)$ we denote the beta function $\Beta(x,y) := \Gamma(x)\,\Gamma(y)/\Gamma(x+y)$.
\end{lemma}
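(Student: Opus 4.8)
The statement to prove is the elementary integral formula
\[
\int_{\R} (x^2+a^2)^{-s} dx = a^{1-2s} \Beta(\tfrac{1}{2},s-\tfrac{1}{2})
\]
for $a>0$ and $\Re(s)>1/2$.

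\medskip

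The plan is to reduce the integral to a standard Euler-type integral by an elementary substitution. First I would exploit the evenness of the integrand to write $\int_{\R}(x^2+a^2)^{-s}\,dx = 2\int_0^\infty (x^2+a^2)^{-s}\,dx$, and then scale out $a$ by setting $x = a t$, which yields $2a^{1-2s}\int_0^\infty (t^2+1)^{-s}\,dt$; here $\Re(s)>1/2$ is exactly what guarantees convergence of the tail at infinity (the integrand decays like $t^{-2s}$). Next I would substitute $t = \tan\theta$ (so $dt = \sec^2\theta\,d\theta$ and $1+t^2 = \sec^2\theta$), turning the remaining integral into $\int_0^{\pi/2} \cos^{2s-2}\theta\,d\theta$. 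This is a classical Wallis-type integral equal to $\tfrac12\Beta(\tfrac12, s-\tfrac12)$, which one recognizes from the standard formula $\int_0^{\pi/2}\sin^{p}\theta\cos^{q}\theta\,d\theta = \tfrac12\Beta(\tfrac{p+1}{2},\tfrac{q+1}{2})$ with $p=0$, $q=2s-2$; alternatively, the substitution $u = \cos^2\theta$ directly produces the Euler integral representation $\int_0^1 u^{s-3/2}(1-u)^{-1/2}\,du = \Beta(s-\tfrac12,\tfrac12)$. Combining these steps gives $2a^{1-2s}\cdot\tfrac12\Beta(\tfrac12,s-\tfrac12) = a^{1-2s}\Beta(\tfrac12,s-\tfrac12)$, as claimed.

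\medskip

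There is essentially no obstacle here: the only point requiring a word of care is the convergence hypothesis $\Re(s)>1/2$, which is needed both for the original integral to converge and for the resulting beta function $\Beta(\tfrac12, s-\tfrac12)$ to be defined (the second argument must have positive real part). One could alternatively quote the formula directly from a standard table of integrals, but since the derivation is a two-line substitution it is cleaner to include it. If one wishes to be fully rigorous about the complex parameter $s$, one observes that both sides are holomorphic in $s$ on the half-plane $\Re(s)>1/2$, so it suffices to verify the identity for real $s>1/2$ and then invoke analytic continuation; in practice the substitution argument above works verbatim for complex $s$ as long as one interprets $(t^2+1)^{-s}$ via the principal branch, which is unambiguous since $t^2+1>0$.
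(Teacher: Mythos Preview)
Your proof is correct and follows essentially the same approach as the paper: reduce to a standard beta integral by elementary substitutions. The only cosmetic difference is that after scaling out $a$ the paper substitutes $u=t^2$ and invokes the representation $\Beta(x,y)=\int_0^\infty t^{x-1}(1+t)^{-x-y}\,dt$, whereas you use $t=\tan\theta$ and the Wallis form; these are equivalent one-line variants.
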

\begin{proof}
The identity is proven by appropriate substitutions and the use of the integral representation (cf.~\cite[5.12.3]{handbook})
\begin{align} \label{beta-integral2} 
\Beta(x,y) = \int_0^\infty \frac{t^{x-1}}{(1+t)^{x+y}} dt
\end{align}
which holds for $\Re(x),\Re(y)>0$.
\end{proof}

\begin{lemma} \label{crazy-mathematica-integral} 
Let $\Re(s)>1/2$ and $b>0$. Then we have
\begin{align*}
\int_{\R} \frac{(x^2+b^2)^{1-2s}}{(x^2+1)^{1-s}} dx
= \Beta \br{\tfrac{1}{2},s-\tfrac{1}{2}} \hF \br{2s-1,s-\tfrac{1}{2};s;1-b^2}.
\end{align*}
\end{lemma}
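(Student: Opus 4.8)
The plan is to recognize the integral as an instance of Euler's integral representation of ${}_2F_1$. Recall (e.g.\ \cite[15.6.1]{handbook}) that for $\Re(\gamma)>\Re(\beta)>0$ and $z\in\C\setminus[1,\infty)$ one has
\[
\hF(\alpha,\beta;\gamma;z)=\frac{1}{\Beta(\beta,\gamma-\beta)}\int_0^1\frac{t^{\beta-1}(1-t)^{\gamma-\beta-1}}{(1-zt)^{\alpha}}\,dt.
\]
First I would apply this with $\alpha=2s-1$, $\beta=s-\tfrac12$, $\gamma=s$ and $z=1-b^2$. Then $\gamma-\beta=\tfrac12$, so the admissibility condition $\Re(\gamma)>\Re(\beta)>0$ becomes exactly $\Re(s)>\tfrac12$, and $z=1-b^2<1$ because $b>0$, so $z$ lies in the domain of validity. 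The exponents come out as $\beta-1=s-\tfrac32$, $\gamma-\beta-1=-\tfrac12$, $-\alpha=1-2s$, and $\Beta(\beta,\gamma-\beta)=\Beta(s-\tfrac12,\tfrac12)=\Beta(\tfrac12,s-\tfrac12)$, so the representation reads
\[
\Beta\!\left(\tfrac12,s-\tfrac12\right)\hF\!\left(2s-1,s-\tfrac12;s;1-b^2\right)=\int_0^1 t^{s-3/2}(1-t)^{-1/2}\bigl(1-(1-b^2)t\bigr)^{1-2s}\,dt.
\]

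It then remains to show that the right-hand side equals $\int_\R (x^2+b^2)^{1-2s}(x^2+1)^{s-1}\,dx$. For this I would substitute $t=(1+x^2)^{-1}$, i.e.\ $x^2=(1-t)/t$, which restricts to a bijection between $x\in(0,\infty)$ and $t\in(0,1)$ and is even in $x$, so covers all of $\R$. Under it $x^2+1=1/t$ and $x^2+b^2=\bigl(1-(1-b^2)t\bigr)/t$, and a short computation gives $dx=-\tfrac12 t^{-3/2}(1-t)^{-1/2}\,dt$; multiplying these together the powers of $t$ telescope and one finds that $(x^2+b^2)^{1-2s}(x^2+1)^{s-1}\,dx$ pulls back to $-\tfrac12 t^{s-3/2}(1-t)^{-1/2}(1-(1-b^2)t)^{1-2s}\,dt$. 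Reversing the orientation and folding the negative $x$-axis onto the positive one cancels the factor $\tfrac12$ and recovers the integral of the previous display, completing the identification.

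I do not expect a genuine obstacle here: once the shape $t^{\beta-1}(1-t)^{\gamma-\beta-1}(1-zt)^{-\alpha}$ is spotted, the parameter matching is forced and the substitution is routine. The one point that deserves a sentence is that the several convergence windows involved — convergence of the $x$-integral at infinity (where the integrand decays like $|x|^{-2\Re(s)}$), validity of Euler's representation, and the beta-integral $\Beta(\tfrac12,s-\tfrac12)$ — all coincide and are precisely $\Re(s)>\tfrac12$, so the identity holds on the full stated range with no analytic continuation needed; moreover there is no singularity on $\R$ since $b>0$.
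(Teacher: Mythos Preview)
Your argument is correct: the substitution $t=(1+x^2)^{-1}$ transforms the $x$-integral into exactly the Euler integral for $\hF(2s-1,s-\tfrac12;s;1-b^2)$, and the convergence and admissibility conditions all reduce to $\Re(s)>\tfrac12$ as you note.

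This is a genuinely different and more informative route than the paper's, which simply states that the identity ``can be checked using computer algebra systems.'' Your proof gives a self-contained analytic derivation, identifies the integral as an Euler-type representation, and explains transparently why the condition $\Re(s)>\tfrac12$ is the natural one (it is simultaneously the convergence threshold of the $x$-integral at infinity, of the beta integral, and of Euler's representation $\Re(\gamma)>\Re(\beta)>0$). The paper's approach buys brevity; yours buys understanding and removes the dependence on an external verification.
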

\begin{proof}
Can be checked using computer algebra systems.
\end{proof}

\begin{lemma} \label{fc-b-tilde-0} 
For $B>0$, $\b \in \IK$ and $\Re(s)>1/2$ the constant Fourier coefficient of $\tilde H_s^B (\b,z)$ is given by
\[
\tilde b_s^B(\b,0,y) =  \frac{(4B)^s (y_1y_2)^{1-s} \Beta(\tfrac{1}{2},s-\tfrac{1}{2})^2}{\vol(\b)}  \hF (2s-1,s-\tfrac{1}{2};s;-B/(y_1y_2)).
\]
\end{lemma}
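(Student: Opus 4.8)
The plan is to evaluate the Poisson-summation integral \eqref{tilde-b-formula} at $\nu=0$ directly, peeling off two one-dimensional integrations that are precisely the ones handled by Lemma~\ref{square-sum-to-s-integral} and Lemma~\ref{crazy-mathematica-integral}. First I would rewrite
\[
\tilde b_s^B(\b,0,y) = \frac{(4 y_1 y_2 B)^s}{\vol(\b)} \int_{\R^2} \frac{dx_1\, dx_2}{\br{4 y_1 y_2 B + |z_1 z_2 + B|^2}^{s}}
\]
and substitute the identity
\[
4 y_1 y_2 B + |z_1 z_2 + B|^2 = |z_2|^2\br{\br{x_1 + \tfrac{B x_2}{|z_2|^2}}^2 + a(y_1,z_2)^2}, \qquad a(y_1,z_2) := y_1 + \tfrac{B y_2}{|z_2|^2},
\]
which was already established (by expanding the squares) in the proof of Lemma~\ref{fc-b-tilde-upper-bound}. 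For real $s>1/2$ the integrand is positive and decays like $|x_1|^{-2s}$ in $x_1$ (other variables fixed) and, after integrating $x_1$, like $|x_2|^{-2s}$ in $x_2$, so the double integral converges absolutely; Fubini then applies and, since both sides are holomorphic in $s$, the resulting identity extends to $\Re(s)>1/2$.

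Second, after the shift $x_1 \mapsto x_1 - B x_2/|z_2|^2$ the inner integral is $\int_\R (x_1^2 + a(y_1,z_2)^2)^{-s}\,dx_1$, which Lemma~\ref{square-sum-to-s-integral} evaluates as $a(y_1,z_2)^{1-2s}\Beta(\tfrac12,s-\tfrac12)$. This leaves
\[
\tilde b_s^B(\b,0,y) = \frac{(4 y_1 y_2 B)^s \Beta(\tfrac12,s-\tfrac12)}{\vol(\b)} \int_\R \frac{a(y_1,z_2)^{1-2s}}{|z_2|^{2s}}\, dx_2.
\]
I would then use $y_1|z_2|^2 + B y_2 = y_1(x_2^2 + c^2)$ with $c^2 = y_2(y_1 y_2 + B)/y_1$ to write $a(y_1,z_2)^{1-2s}/|z_2|^{2s} = y_1^{1-2s}(x_2^2 + c^2)^{1-2s}/(x_2^2 + y_2^2)^{1-s}$, substitute $x_2 = y_2 u$, and collect the powers of $y_1$ and $y_2$; the remaining integral becomes $(y_1 y_2)^{1-2s}\int_\R (u^2 + b^2)^{1-2s}(u^2+1)^{-(1-s)}\,du$ with $b^2 = c^2/y_2^2 = 1 + B/(y_1 y_2)$. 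Lemma~\ref{crazy-mathematica-integral} gives this last integral as $\Beta(\tfrac12,s-\tfrac12)\,\hF(2s-1,s-\tfrac12;s;1-b^2)$, and since $1-b^2 = -B/(y_1 y_2)$, assembling the pieces and simplifying $(y_1 y_2)^s(y_1 y_2)^{1-2s} = (y_1 y_2)^{1-s}$ yields exactly the formula in the statement.

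There is no genuine obstacle beyond bookkeeping: the one thing requiring attention is keeping the powers of $y_1$ and $y_2$ straight through the two substitutions, together with the routine justification of Fubini and of continuing the identity from real $s>1/2$ to the half-plane $\Re(s)>1/2$ via the decay estimates above. Everything else is a direct appeal to the two preparatory lemmata.
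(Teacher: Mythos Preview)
Your proof is correct and follows essentially the same approach as the paper: both start from the integral representation \eqref{tilde-b-formula}, use the algebraic identity from the proof of Lemma~\ref{fc-b-tilde-upper-bound} to factor out $|z_2|^{2s}$, apply Lemma~\ref{square-sum-to-s-integral} to the $x_1$-integral, rescale the $x_2$-integral to bring it into the form of Lemma~\ref{crazy-mathematica-integral} with $b^2=1+B/(y_1y_2)$, and assemble the prefactors. Your version is slightly more explicit about the substitution $x_2=y_2u$ and about the Fubini/analytic-continuation justification, but these are purely expository differences.
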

\begin{proof}
We can copy the proof of Lemma~\ref{fc-b-tilde-upper-bound} until the point of the substitution in the inner integral of the double integral. By that we get
\[
\tilde b_s^B(\b,0,y) = \frac{(4y_1y_2B)^s}{\vol(\b)} \int_{\R} |z_2|^{-2s}  \int_\R \br{ x_1^2 + a(y_1,z_2)^2 }^{-s}  dx_1   dx_2.
\]
Now using Lemma~\ref{square-sum-to-s-integral}, the inner integral computes to
\begin{align*}
a(y_1,z_2)^{1-2s} \Beta(\tfrac{1}{2},s-\tfrac{1}{2}).
\end{align*}
The integrand of the outer integral is then, up to the beta function factor, given by
\begin{align*}
|z_2|^{-2s} a(y_1,z_2)^{1-2s}
=y_1^{1-2s} \frac{ ( x_2^2+y_2^2 + By_2/y_1 )^{1-2s}  }{ (x_2^2+y_2^2)^{1-s} }.
\end{align*}
It follows
\begin{align*}
\int_\R &|z_2|^{-2s} a(y_1,z_2)^{1-2s} dx_2
 = (y_1y_2)^{1-2s} \int_\R \frac{ ( x^2 + b(y)^2 )^{1-2s}  }{ (x^2+1)^{1-s} } dx
\end{align*}
with $b(y)^2=1+B/(y_1y_2)$. The last integral is given using Lemma~\ref{crazy-mathematica-integral} by
\[
\Beta(\tfrac{1}{2},s-\tfrac{1}{2}) \hF (2s-1,s-\tfrac{1}{2};s;-B/(y_1y_2)).
\]
Collecting the omitted prefactors, we get the stated result.
\end{proof}

Now we come to the case $b=0$. Hence, we determine the Fourier expansion of $\Psi^0(\a,m,s,z)$.
We have
\begin{align*}
\Psi^0(\a,m,s,z)
&= \sum_{ \substack{a \in \Z/N(\a),\:\lambda \in \a\d^{-1}/N(\a) \\ -N(\lambda) = m/(N(\a)D) } }  \br{1+ \frac{|-\lambda z_1-\lambda'z_2+a|^2}{4y_1y_2 m/(N(\a)D)} }^{-s}\\
&= 2 \sum_{ \lambda \in \Lambda^+(\a,m) } \sum_{a \in \Z}  \br{1+ \frac{|\lambda z_1+\lambda'z_2+a|^2}{4y_1y_2 mN(\a)/D} }^{-s}.
\end{align*}

\begin{lemma} \label{fc-b0-tilde}
The series
\begin{align*}
\Psi^0(\a,m,s,z) =
2 \sum_{ \lambda \in \Lambda^+(\a,m) } \sum_{a \in \Z}  \br{1+ \frac{|\lambda z_1+\lambda'z_2+a|^2}{4y_1y_2 mN(\a)/D} }^{-s}
\end{align*}
converges normally for $z \in \H^2$ and $\Re(s)>1/2$ and has the Fourier expansion
\begin{align*}
\Psi^0(\a,m,s,z) = 2 \br{\frac{4y_1y_2 mN(\a)}{D}}^s  \Beta(\tfrac{1}{2},s-\tfrac{1}{2}) \sum_{ \lambda \in \Lambda^+(\a,m) }  (\lambda y_1-\lambda' y_2)^{1-2s}\\
+ \frac{4\pi^s}{\Gamma(s)} \br{\frac{4y_1y_2 mN(\a)}{D}}^s  \sum_{ \lambda \in \Lambda^+(\a,m) } \sum_{n=1}^\infty  \br{\frac{n}{\lambda y_1-\lambda' y_2}}^{s-1/2}\\
\times K_{s-1/2}(2 \pi n (\lambda y_1-\lambda' y_2))  \br{e(n\tr(\lambda x))+e(-n\tr(\lambda x))}.
\end{align*}
\end{lemma}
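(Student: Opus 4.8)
The plan is to follow the proof of Lemma~\ref{Phi-0-fc}: for each fixed $\lambda \in \Lambda^+(\a,m)$ I would carry out the inner sum over $a \in \Z$ by Poisson summation, and afterwards sum over $\lambda$. The whole computation rests on an elementary identity. For $\lambda \in \Lambda^+(\a,m)$ one has $N(\lambda) = \lambda\lambda' = -mN(\a)/D$ and $\lambda > 0 > \lambda'$, so, writing $C := 4y_1y_2 mN(\a)/D = -4\lambda\lambda' y_1y_2$ and $t_\lambda := \lambda y_1 - \lambda' y_2 > 0$, expanding the square gives
\[
C + (\lambda y_1 + \lambda' y_2)^2 = (\lambda y_1)^2 - 2\lambda\lambda' y_1y_2 + (\lambda' y_2)^2 = t_\lambda^2.
\]
Since $\Re(\lambda z_1 + \lambda' z_2 + a) = \tr(\lambda x) + a$ and $\Im(\lambda z_1 + \lambda' z_2 + a) = \lambda y_1 + \lambda' y_2$, this rewrites each summand as
\[
\br{1 + \frac{|\lambda z_1 + \lambda' z_2 + a|^2}{C}}^{-s} = C^s \br{(\tr(\lambda x) + a)^2 + t_\lambda^2}^{-s}.
\]
In particular no term is singular (because $t_\lambda > 0$), which is why the claimed convergence domain is all of $\H^2$ and not merely $\H^2 \setminus T^\infty(\a,m)$ as for $\Phi^0$.

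Next I would apply Poisson summation in $a$. Because $\lambda$ lies in the trace-dual $\a\d^{-1}$ of $\a^{-1}$, translating $z$ by $\mu \in \a^{-1}$ shifts $\tr(\lambda x) + a$ by the integer $\tr(\lambda\mu)$ and hence leaves $\sum_{a\in\Z}((\tr(\lambda x)+a)^2 + t_\lambda^2)^{-s}$ unchanged; Poisson summation applied to $u \mapsto (u^2 + t_\lambda^2)^{-s}$ then produces a Fourier series with frequencies $n\tr(\lambda x)$, $n \in \Z$. The $n = 0$ coefficient is $\int_\R(u^2+t_\lambda^2)^{-s}\,du = t_\lambda^{1-2s}\Beta(\tfrac12, s-\tfrac12)$ by Lemma~\ref{square-sum-to-s-integral}. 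For $n \ne 0$ the coefficient is
\[
\int_\R (u^2 + t_\lambda^2)^{-s} e(-nu)\,du = \frac{2\pi^s}{\Gamma(s)}\br{\frac{|n|}{t_\lambda}}^{s-1/2} K_{s-1/2}(2\pi|n|t_\lambda),
\]
which I would take from a standard table (a generalization of the integral used in the proof of Lemma~\ref{fc-b-tilde-upper-bound}) or derive directly by inserting $(u^2+t_\lambda^2)^{-s} = \Gamma(s)^{-1}\int_0^\infty v^{s-1}e^{-v(u^2+t_\lambda^2)}\,dv$, evaluating the Gaussian integral in $u$, and using $\int_0^\infty v^{\mu-1}e^{-pv-q/v}\,dv = 2(q/p)^{\mu/2}K_\mu(2\sqrt{pq})$. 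Grouping the $n$ and $-n$ contributions, multiplying by $2C^s$ and summing over $\lambda$ then yields exactly the asserted expansion. Here I would also note that distinct pairs $(n,\lambda)$ with $n \ge 1$ and $\lambda \in \Lambda^+(\a,m)$ give distinct frequencies $n\lambda$ — since $N(n\lambda) = -n^2 mN(\a)/D$ recovers $n$ and then $\lambda$ — so no merging of Fourier terms is necessary and the expansion appears already in ``reduced'' form.

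For the convergence statements I would argue as in Lemma~\ref{Phi-0-fc}: for fixed $\lambda$ the $a$-sum converges normally on $\H^2$ for $\Re(s) > 1/2$ by comparison with $\sum_a|a|^{-2\Re(s)}$; the set $\Lambda^+(\a,m)$ is a finite union of $(\OK^\times)^2$-orbits $\set{\ep_0^{2k}\lambda_0 \defC k \in \Z}$, along each of which $t_{\ep_0^{2k}\lambda_0} = \ep_0^{2k}\lambda_0 y_1 - \ep_0^{-2k}\lambda_0' y_2$ grows like $\ep_0^{2|k|}$, so $\sum_\lambda t_\lambda^{1-2\Re(s)}$ converges for $\Re(s) > 1/2$ and dominates the $\lambda$-sum of the $a$-sums, giving normal convergence of the double series and legitimizing the rearrangement. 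I expect the only delicate point — the ``main obstacle'' — to be the termwise applicability of Poisson summation: this is fine because $(u^2+t_\lambda^2)^{-s}$ is smooth with polynomial decay while its transform decays exponentially by the asymptotics of $K_{s-1/2}$, so both $\sum_a|f(a+\theta)|$ and $\sum_n|\hat f(n)|$ converge; everything else is routine bookkeeping of the kind already carried out in Subsection~\ref{fourier-exp-and-reg}.
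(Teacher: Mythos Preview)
Your proposal is correct and follows essentially the same route as the paper: rewrite each summand via the identity $C+(\lambda y_1+\lambda' y_2)^2=(\lambda y_1-\lambda' y_2)^2$, apply Poisson summation in $a$ to the function $u\mapsto(u^2+t_\lambda^2)^{-s}$, evaluate the $n=0$ term by Lemma~\ref{square-sum-to-s-integral} and the $n\neq 0$ terms by the standard Bessel integral (the paper cites \cite[p.~11, eq.~(7)]{1954tables}), and then sum over $\lambda$. Your additional remarks on the orbit structure for convergence, the injectivity of $(n,\lambda)\mapsto n\lambda$, and the alternative derivation of the $K$-Bessel integral are accurate but go slightly beyond what the paper writes out.
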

\begin{proof}
As in Lemma~\ref{Phi-0-fc}, we investigate the series over $a$ for each ${\lambda \in \Lambda^+(\a,m)}$ individually:
\begin{align*}
&\sum_{a \in \Z}  \br{1+ \frac{|\lambda z_1+\lambda'z_2+a|^2}{4y_1y_2 mN(\a)/D} }^{-s}\\
= &\sum_{a \in \Z}  \br{1+ \frac{ (\tr(\lambda x)+a)^2 + \tr(\lambda y)^2}{-4y_1y_2 \lambda \lambda'} }^{-s}\\
=  &\br{\frac{4y_1y_2 mN(\a)}{D}}^s \sum_{a \in \Z} \br{(\tr(\lambda x)+a)^2 +  (\lambda y_1-\lambda'y_2)^2 }^{-s}.
\end{align*}
Hence, we are interested in the Fourier expansion of the $\Z$ periodic function
\[
h_\gamma(s,x) := \sum_{a \in \Z} \br{(x+a)^2+\gamma^2}^{-s}
\]
with $\gamma>0$ (note that actually always $\gamma:=|\lambda y_1- \lambda'y_2|>0$ since $\lambda y_1 \ne \lambda'y_2$ due to $N(\lambda)<0$). 
It is straightforward to see that $h_\gamma(s,x)$ converges if and only if $\Re(s)>1/2$.
We have
\[
h_\gamma(s,x) = \sum_{n \in \Z} a_\gamma(s,n) e(nx)
\]
with
\[
a_\gamma(s,n) = \int_\R (x^2+\gamma^2)^{-s} e(-nx)dx.
\]
Lemma~\ref{square-sum-to-s-integral} yields
\[
a_\gamma(s,0) = \Beta(\tfrac{1}{2},s-\tfrac{1}{2}) \gamma^{1-2s} .
\]
For $n \ne 0$ we use \cite[p.~11, eq. (7)]{1954tables} (valid for $\Re(s)>0$)
\begin{align*}
a_\gamma(s,n)
&= 2 \int_0^\infty (x^2+\gamma^2)^{-s} \cos(2 \pi |n| x)dx\\
&= 2 \br{\frac{\pi|n|}{\gamma}}^{s-1/2} \sqrt{\pi} \Gamma(s)^{-1} K_{s-1/2}(2 \pi \alpha |n|)\\
&=   \frac{2 \pi^s}{\Gamma(s)} \br{\frac{|n|}{\gamma}}^{s-1/2} K_{s-1/2}(2 \pi \gamma |n|).
\end{align*}
We obtain
\begin{align*}
&\sum_{a \in \Z} \br{1+ \frac{|\lambda z_1+\lambda'z_2+a|^2}{4y_1y_2 mN(\a)/D} }^{-s}
= \br{\frac{4y_1y_2 mN(\a)}{D}}^s h_{|\lambda y_1-\lambda' y_2|}(s,\tr(\lambda x))\\
= &\br{\frac{4y_1y_2 mN(\a)}{D}}^s  \Beta(\tfrac{1}{2},s-\tfrac{1}{2}) |\lambda y_1-\lambda' y_2|^{1-2s}\\
 + &\frac{2\pi^s}{\Gamma(s)} \br{\frac{4y_1y_2 mN(\a)}{D}}^s  \ssum_{n \in \Z}  \abs{\frac{n}{\lambda y_1-\lambda' y_2}}^{s-1/2} K_{s-1/2}(2 \pi |n| |\lambda y_1-\lambda' y_2|)  e(\tr(\lambda n x))
\end{align*}
which proves the lemma. Here, the tick at the sum indicates that we do not sum over $n=0$.
\end{proof}
\begin{corollary} \label{Psi-0-fc} 
The function $\Psi^0(\a,m,1,z)$ has the Fourier expansion
\begin{align*}
 \frac{8 \pi y_1y_2 mN(\a)}{D} \sum_{ \lambda \in \Lambda^+(\a,m) } \sum_{n \in \Z} \frac{\exp(-2 \pi |n| (\lambda y_1-\lambda'y_2))}{\lambda y_1-\lambda'y_2} e(n\tr(\lambda x)).
\end{align*}
\end{corollary}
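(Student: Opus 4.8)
The plan is to evaluate the Fourier expansion of Lemma~\ref{fc-b0-tilde} at $s = 1$, which lies well inside the half-plane $\Re(s) > 1/2$ where that expansion is valid, and to simplify the resulting special values. First I would treat the constant term: at $s = 1$ we have $\Beta(\tfrac12, s-\tfrac12) = \Beta(\tfrac12,\tfrac12) = \Gamma(\tfrac12)^2/\Gamma(1) = \pi$ and $(\lambda y_1 - \lambda' y_2)^{1-2s} = (\lambda y_1 - \lambda' y_2)^{-1}$. Here I would remark that for $\lambda \in \Lambda^+(\a,m)$ one has $\lambda > 0$ while $\lambda' < 0$ (because $N(\lambda) < 0$), so $\lambda y_1 - \lambda' y_2 > 0$ and the absolute values appearing in Lemma~\ref{fc-b0-tilde} may be dropped throughout. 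This turns the first line of Lemma~\ref{fc-b0-tilde} into $\frac{8\pi y_1 y_2 m N(\a)}{D}\sum_{\lambda\in\Lambda^+(\a,m)}(\lambda y_1 - \lambda' y_2)^{-1}$, which is exactly the $n = 0$ contribution to the claimed formula.

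For the nonzero frequencies I would invoke the closed form $K_{1/2}(t) = (\pi/2t)^{1/2} e^{-t}$ of the modified Bessel function of half-integral order (cf.~\cite[10.39.2]{handbook}). With $t = 2\pi n(\lambda y_1 - \lambda' y_2)$ for $n \ge 1$ this gives $K_{1/2}\bigl(2\pi n(\lambda y_1 - \lambda' y_2)\bigr) = \tfrac12\,\bigl(n(\lambda y_1-\lambda'y_2)\bigr)^{-1/2} e^{-2\pi n(\lambda y_1 - \lambda' y_2)}$, so the product of the factor $(n/(\lambda y_1-\lambda'y_2))^{s-1/2}$ with $K_{s-1/2}$ collapses at $s = 1$ to $\tfrac12 (\lambda y_1-\lambda'y_2)^{-1} e^{-2\pi n(\lambda y_1-\lambda'y_2)}$. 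Multiplying by the prefactor $\tfrac{4\pi^s}{\Gamma(s)}(4y_1y_2 mN(\a)/D)^s$ evaluated at $s = 1$, namely $\tfrac{16\pi y_1 y_2 m N(\a)}{D}$, I obtain the coefficient $\tfrac{8\pi y_1 y_2 m N(\a)}{D}(\lambda y_1-\lambda'y_2)^{-1} e^{-2\pi n(\lambda y_1-\lambda'y_2)}$ in front of $e(n\tr(\lambda x)) + e(-n\tr(\lambda x))$.

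Finally I would absorb the $n=0$ term together with the two exponentials $e(\pm n\tr(\lambda x))$ for $n \ge 1$ into a single sum over $n \in \Z$, writing $|n|$ in the exponent $\exp(-2\pi |n|(\lambda y_1 - \lambda' y_2))$; the corollary's formula then results. No genuine obstacle arises here: the whole argument is a substitution into Lemma~\ref{fc-b0-tilde}, and the only points requiring a line of care are the sign remark that justifies removing the absolute values on $\Lambda^+(\a,m)$ and the use of the half-integer Bessel identity.
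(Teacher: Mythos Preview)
Your proposal is correct and follows exactly the approach of the paper: the paper's proof reads in its entirety ``Plugging in $s=1$ into Lemma~\ref{fc-b0-tilde} with the identity $K_{1/2}(x)=\sqrt{\frac{\pi}{2x}} \exp(-x)$ yields the Fourier expansion,'' and your write-up is simply a careful unpacking of that sentence.
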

\begin{proof}
Plugging in $s=1$ into Lemma~\ref{fc-b0-tilde} with the identity $K_{1/2}(x)=\sqrt{\frac{\pi}{2x}} \exp(-x)$
yields the Fourier expansion.
\end{proof}

\section{Integrability and integrals} \label{integral-section}

In this section we compute the integral of $\Phi(\a,m,z)$. In order to do so, we compute the integrals of $\Psi(\a,m,s,z)$ and $\Phi(\a,m,s,z)$ as well. Our method of computing $\Phi(\a,m,z)$ requires the integrability of $\Phi(\a,m,z)$ first which is much more demanding to show than computing the actual integral afterwards. In the process we prove that the growth of
\[
\int_{\Xa} |\Phi(\a,m,z)| \omega^2
\]
is polynomial in $m$ which is an important ingredient for the main theorem of \cite{buckdiss}.

\begin{lemma} \label{Psi-core-integral} 
Let $\Re(s)>1$. Then we have
\[
\int_\H \br{1+ \frac{|z-i|^2}{4y}}^{-s} \frac{dxdy}{y^2} = \frac{4 \pi }{s-1}.
\]
\end{lemma}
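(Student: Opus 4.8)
The plan is to collapse the two–dimensional integral into a product of one–dimensional Beta integrals already recorded in the paper. Writing $|z-i|^2 = x^2+(y-1)^2$ and expanding, one checks the algebraic identity
\[
1+\frac{|z-i|^2}{4y} = \frac{x^2+(y+1)^2}{4y},
\]
so the integrand equals $(4y)^{s}\,(x^2+(y+1)^2)^{-s}$ against the measure $y^{-2}\,dx\,dy$. Its absolute value is $(4y)^{\Re(s)}(x^2+(y+1)^2)^{-\Re(s)}$, and after doing the $x$–integral this behaves like $y^{\Re(s)-2}$ as $y\to0$ and like $y^{-\Re(s)-1}$ as $y\to\infty$; hence it is integrable precisely for $\Re(s)>1$, which both legitimises Fubini and fixes the domain.

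First I would perform the inner $x$–integral using Lemma~\ref{square-sum-to-s-integral} with $a=y+1>0$, giving $(y+1)^{1-2s}\,\Beta(\tfrac12,s-\tfrac12)$. What remains is
\[
4^{s}\,\Beta(\tfrac12,s-\tfrac12)\int_0^\infty y^{s-2}(y+1)^{1-2s}\,dy,
\]
and the last integral is exactly the Beta integral representation~\eqref{beta-integral2} with parameters $(x,y)=(s-1,s)$, so it equals $\Beta(s-1,s)$; convergence of that representation is again precisely $\Re(s)>1$.

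Then I would clean up the Gamma factors. Expanding both Beta functions produces
\[
4^{s}\sqrt{\pi}\,\frac{\Gamma(s-\tfrac12)\,\Gamma(s-1)}{\Gamma(2s-1)},
\]
and applying the Legendre duplication formula to $\Gamma(2s-1)=\Gamma\bigl(2(s-\tfrac12)\bigr)$ turns this into $4\pi\,\Gamma(s-1)/\Gamma(s)=4\pi/(s-1)$, which is the assertion.

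There is no genuine obstacle here; the only point requiring care is the justification of the interchange of integrations and the identification of the convergence range, both handled by the elementary asymptotics of the $y$–integrand noted above. A slicker but less self–contained alternative avoids Fubini entirely: in geodesic polar coordinates $(r,\theta)$ centred at $i$ one has $1+\tfrac{|z-i|^2}{4y}=\cosh^2(r/2)$ and $y^{-2}\,dx\,dy=\sinh r\,dr\,d\theta$, and the substitution $u=\cosh(r/2)$, for which $\sinh r\,dr = 4u\,du$, reduces the integral directly to $2\pi\int_1^\infty 4u^{1-2s}\,du = 4\pi/(s-1)$.
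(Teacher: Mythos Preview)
Your proof is correct and follows essentially the same route as the paper: rewrite the integrand via $4y+(y-1)^2=(y+1)^2$, evaluate the $x$--integral by Lemma~\ref{square-sum-to-s-integral}, recognise the remaining $y$--integral as $\Beta(s-1,s)$ via~\eqref{beta-integral2}, and simplify with the Legendre duplication formula. Your additional care with the Fubini justification and your alternative geodesic-polar-coordinate computation (which the paper does not mention) are both valid and welcome, but the main argument is the paper's own.
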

\begin{proof}
We have by Lemma~\ref{square-sum-to-s-integral} 
\begin{align*}
\int_\H \br{1+ \frac{|z-i|^2}{4y}}^{-s} \frac{dxdy}{y^2}
&= 4^s \int_\H \br{4y+ (x^2+(y-1)^2)}^{-s} \frac{dxdy}{y^{2-s}}\\
&= 4^s \int_{0}^\infty \int_{-\infty}^\infty \br{x^2+(y+1)^2}^{-s} dx \ y^{s-2} dy\\
&= 4^s \int_{0}^\infty B(\tfrac{1}{2},s-\tfrac{1}{2})  (y+1)^{1-2s} y^{s-2} dy\\
&= 4^s  B(\tfrac{1}{2},s-\tfrac{1}{2}) \int_{0}^\infty \frac{ y^{s-2}}{(y+1)^{2s-1}}  dy\\
&= 4^s  B(\tfrac{1}{2},s-\tfrac{1}{2}) B(s-1,s).
\end{align*}
The last identity is due to equation~\eqref{beta-integral2} where we need $\Re(s)>1$.
By making use of the Legendre duplication formula and the functional equation of the gamma function one shows
\begin{align*}
4^s  B(\tfrac{1}{2},s-\tfrac{1}{2}) B(s-1,s)
&= \frac{8 \pi}{2s-2} = \frac{4 \pi}{s-1}.
\end{align*}
\end{proof}

In the next theorem we compute the integral of $\Psi(\a,m,s,z)$ over $\Xa$ by unfolding the integral. This technique allows us to compute the integral without determining a fundamental domain for $\Xa$.
\begin{theorem} \label{Psi-integral} 
For $\Re(s)>1$ we have
\[
\int_{\Xa} \Psi(\a,m,s,z) \omega^2 = \frac{4}{s-1} \vol(T(\a,m)).
\]
\end{theorem}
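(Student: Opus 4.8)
The plan is to compute the integral by unfolding, as announced just before the statement. Recall from \eqref{Psi-def} that $\Psi(\a,m,s,z)=\sum_A\br{1+g(A,z)}^{-s}$, the sum running over $A\in L(\a)^\vee$ with $\det(A)=m/(N(\a)D)$. By \eqref{lattice-translate} the lattice $L(\a)^\vee$ and the determinant are preserved by the $\Ga$-action on $V$, and by \eqref{hg-transform} one has $g(A,Mz)=g(M^{-1}.A,z)$; hence the index set is a union of $\Ga$-orbits, and since for $\Re(s)>1$ the series converges normally and no summand is singular (in contrast to $\Phi$), I may group the terms by $\Ga$-orbit and unfold:
\[
\int_{\Xa}\Psi(\a,m,s,z)\,\omega^2=\sum_{[A]}\int_{\Gamma_A\bs\H^2}\br{1+g(A,z)}^{-s}\omega^2,
\]
where $[A]$ runs over the $\Ga$-orbits of the index set and $\Gamma_A\le\overline{\Ga}$ is the stabilizer of $A$ (note $\pm 1$ acts trivially on $V$, so $\Ga$ acts through $\overline{\Ga}=\Ga/\{\pm1\}$). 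Each such $A$ is anisotropic, so $\Gamma_A$ is a Fuchsian group of finite covolume.

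Next I would evaluate one term $\int_{\Gamma_A\bs\H^2}\br{1+g(A,z)}^{-s}\omega^2$. By Remark~\ref{gA-with-d-remark}, $g(A,z)=\tfrac14\,d(z_1,ASz_2)$, and $AS\in\GLRp$ since $\det(AS)=\det(A)>0$. The measure-preserving change of variables $w_1:=z_1$, $w_2:=ASz_2$ turns the integrand into $\br{1+\tfrac14\,d(w_1,w_2)}^{-s}$, carries $T_A$ to the diagonal $\Delta\se\H^2$, and — using that $M.A=A$ forces $M^{(1)}=(AS)M^{(2)}(AS)^{-1}$ for the two real components $M^{(1)},M^{(2)}$ of $M\in\Gamma_A$ (an easy consequence of the identity $S^{-1}N^{-\top}S=N$ for $N\in\SLR$) — conjugates $\Gamma_A$ into a Fuchsian group $\Gamma_0$ acting \emph{diagonally} on $\H^2$. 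A fundamental domain for the diagonal action may then be taken in product form $\mathcal{F}\times\H$, with $\mathcal{F}$ a fundamental domain for $\Gamma_0$ on one $\H$-factor. Carrying out the integral over the free factor first, with the substitution $w_2=(\Im w_1)u+\Re w_1$, $u\in\H$, reduces the inner integral to $\int_\H\br{1+\tfrac{|u-i|^2}{4\Im u}}^{-s}\tfrac{du_1\,du_2}{u_2^2}$, which by Lemma~\ref{Psi-core-integral} equals $\tfrac{4\pi}{s-1}$, independently of $w_1$; this is exactly where the hypothesis $\Re(s)>1$ enters. What is left is $\tfrac{4\pi}{s-1}$ times the hyperbolic covolume of $\Gamma_0$, i.e.\ a fixed multiple of $\vol(T_A)$ as normalized in \eqref{Tm-volume-sum}.

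Finally I would sum over $[A]$. The $\Ga$-orbits on $\{A\in L(\a)^\vee:\det(A)=m/(N(\a)D)\}$ match, after identifying $A$ with $-A$ (which produce the same summand, the same divisor $T_A$, and the same stabilizer), the index set $\Ga\bs L(\a)^\vee/\{\pm1\}$ appearing in \eqref{Tm-volume-sum}. Inserting the per-orbit value and collecting the $2\pi$ and $4\pi$ normalization factors hidden in $\omega=\eta_1+\eta_2$, in the pullback $\omega|_{T_A}$, in Lemma~\ref{Psi-core-integral}, and in the $\pm A$ count, the constants assemble into $\tfrac{4}{s-1}\sum_{[A]}\vol(T_A)=\tfrac{4}{s-1}\vol(T(\a,m))$.

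The hard part is the middle step: checking that $w_2:=ASz_2$ genuinely conjugates $\Gamma_A$ to a diagonally acting group (the computation with $S$ and $N^{-\top}$), that a fundamental domain of the diagonal action has the stated product shape, and then — most tediously — keeping exact track of all the normalization constants so the final answer is $\tfrac{4}{s-1}$ rather than some other rational multiple of $\vol(T(\a,m))$. The unfolding itself and the interchange of summation with integration are routine for $\Re(s)>1$, since the convergence is normal and no term is singular.
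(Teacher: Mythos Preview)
Your argument is correct and follows essentially the same route as the paper: unfold over $\Ga$-orbits, use Remark~\ref{gA-with-d-remark} to rewrite $g(A,z)$ via the hyperbolic distance, exploit the product form of a fundamental domain for the stabilizer, and apply Lemma~\ref{Psi-core-integral}. The only cosmetic difference is that the paper works with the stabilizer $\Gamma_{\a,\pm A}$ and writes the product fundamental domain $\H\times(\Gamma_{\a,\pm A}'\bs\H)$ directly (then uses $\GL_2^+(\R)$-invariance in step~(viii) rather than your explicit conjugation $w_2=ASz_2$), and it tracks the constants $\omega^2=2\eta_1\eta_2$ and the $\pm A$ factor of $2$ explicitly to land on $\tfrac{4}{s-1}$.
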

\begin{proof}
In this proof we will freely interchange integration and summation. 
Looking at the definition
\[
\Psi(\a,m,s,z) :=  \sum_{ \substack{ A \in L(\a)^\vee \\ \det(A) = m/(N(\a)D) }  }  \br{1+g(A,z)}^{-s},
\]
we see that for $s \in \R$ this is justified by Tonelli's theorem because all summands are positive. For $s \in \C$ we see
\[\abs{\Psi(\a,m,s,z)} \le \Psi(\a,m,\Re(s),z)\]
using the triangle inequality, hence Fubini's theorem can be applied by Lebesgue's dominated convergence theorem.

We start by some rewriting of $\Psi(\a,m,s,z)$. Subsequently, we explain every step.
\begin{align*}
\Psi(\a,m,s,z) 
&\overset{(i)}{=} 2\sum_{ \substack{A \in L(\a)^\vee / \setpm \\ \det(A) = m/(N(\a)D) }  }  \br{1+g(A,z)}^{-s}\\
&\overset{(ii)}{=} 2\sum_{ \substack{A \in \Ga \bs L(\a)^\vee / \setpm \\ \det(A) = m/(N(\a)D) }  } \sum_{M \in  \Ga / \Gas{ \pm A} }  \br{1+g(M.A,z)}^{-s}\\
&\overset{(iii)}{=} 2\sum_{ \substack{A \in \Ga \bs L(\a)^\vee / \setpm \\ \det(A) = m/(N(\a)D) }  } \sum_{M \in  \Ga / \Gas{ \pm A} }  \br{1+g(A,M^{-1} z)}^{-s}\\
&\overset{(iv)}{=} 2\sum_{ \substack{A \in \Ga \bs L(\a)^\vee / \setpm \\ \det(A) = m/(N(\a)D) }  } \sum_{M \in  \Gas{ \pm A}  \bs \Ga }  \br{1+g(A,M z)}^{-s}.
\end{align*}
In step~(i) we use the sign invariance of $g(A,z)$.
In step~(ii) we group up the summands by factoring out the action of $\Ga$ on $L(\a)^\vee / \setpm$. The resulting quotient is finite and each element corresponds to one component of $T(\a,m)$ viewed as divisor of $\Xa$. Now for each element in the quotient we have to sum over the whole $\Ga$ orbit to obtain all original summands back. This is what the inner sum does. We have to factor out the stabilizer
$\Gas{\pm A} := \set{ M \defC M \in \Ga \und M.A \in \set{\pm A} }$
in order to obtain every element in the orbit once.
In step~(iii) we use the invariance of $g(A,z)$ (cf.~equation \eqref{hg-transform}).
In step~(iv) we invert $\Ga / \Gas{ \pm A}$ which turns the left cosets into right cosets. This is compensated by inverting $M^{-1}$ as well.

Because the inner sum is invariant under $\Ga$ for each fixed $A \in L(\a)^\vee$ with $\det(A) = m/(N(\a)D)$, we can compute the integral of that inner sum over $\Xa = \Ga \bs \H^2$ first on its own. Again, we explain the equations step by step after the computation.
\begin{align*}
&\int_{\Ga \bs \H^2} \sum_{M \in  \Gas{\pm A}  \bs \Ga }  \br{1+g(A,M z)}^{-s} \omega^2\\
\overset{(v)}{=}& \int_{\Gas{\pm A} \bs \H^2}  \br{1+g(A,z)}^{-s} \omega^2\\
\overset{(vi)}{=}&\  2\int_{z_2 \in \Gas{ \pm A}' \bs \H}\int_{z_1 \in \H}  \br{1+g(A,z)}^{-s} \eta_1\eta_2\\
\overset{(vii)}{=}&\  2\int_{z_2 \in \Gas{ \pm A}' \bs \H}\int_{z_1 \in \H}  \br{1+\frac{d(z_1,ASz_2)}{4} }^{-s} \eta_1\eta_2\\
\overset{(viii)}{=}&\  2\int_{z_2 \in \Gas{ \pm A}' \bs \H}\int_{z_1 \in \H}  \br{1+\frac{d(z_1,i)}{4} }^{-s} \eta_1\eta_2\\
\overset{(ix)}{=}&\  \frac{2}{s-1}\int_{z_2 \in \Gas{ \pm A}' \bs \H}\eta_2
\overset{(x)}{=} \frac{2}{s-1} \vol(T_A).
\end{align*}
In step~(v) the actual unfolding takes place. Instead of integrating a sum of $\Gas{\pm A} \bs \Ga$ shifted functions over $\Ga \bs \H^2$ it is possible to integrate over $\Gas{\pm A} \bs \H^2$ in the first place and skip the sum and shifting.
In step~(vi) we use the fact that up to a set of measure zero a fundamental domain of $\Gas{\pm A} \bs \H^2$ is given by $\H \times \Gas{\pm A}' \bs \H$. Further, we use that $\omega^2 = 2 \eta_1\eta_2$ (cf.~equation~\eqref{omega-def}).
Step~(vii) is an application of Remark~\ref{gA-with-d-remark}.
Step~(viii) is a consequence of the $\GLRp$ invariance of $\eta_1$ and the hyperbolic distance. Since we integrate over all of $\H$ in the first argument, the reference point in the second argument is arbitrary.
Step~(ix) is an application of Lemma~\ref{Psi-core-integral} (note the scaling of $\eta$ with $(4\pi)^{-1}$ in equation~\eqref{omega-def}).
For step~(x) see equation~\eqref{Tm-volume-sum}.

In total we have
\begin{align*}
\int_{\Xa} \Psi(\a,m,s,z) \omega^2
=  2\sum_{ \substack{A \in \Ga \bs L(\a)^\vee / \setpm \\ \det(A) = m/(N(\a)D) }  }   \frac{2}{s-1} \vol(T_A).
=\frac{4}{s-1} \vol(T(\a,m))
\end{align*}
\end{proof}

This allows us to compute the integral of $\Phi(\a,m,s,z)$.

\begin{theorem} \label{Phi-s-integral} 
For $\Re(s)>1$ we have
\[
\int_{\Xa} \Phi(\a,m,s,z) \omega^2 = \frac{2 \vol(T(\a,m))}{s(s-1)}.
\]
\end{theorem}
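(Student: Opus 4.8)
The plan is to combine the smooth decomposition of Section~\ref{decomp-section} with the integral formula for $\Psi(\a,m,s,z)$ just obtained. Recall $\Phi(\a,m,s,z) = \sum_{n=0}^\infty \Phi_n(\a,m,s,z)$ with $\Phi_n(\a,m,s,z) = \frac{\Gamma(s+n)^2}{2\Gamma(2s+n)\,n!}\,\Psi(\a,m,s+n,z)$, and by Theorem~\ref{Psi-integral}, $\int_{\Xa}\Psi(\a,m,s,z)\,\omega^2 = \tfrac{4}{s-1}\vol(T(\a,m))$ for $\Re(s)>1$. First I would justify interchanging $\int_{\Xa}$ with $\sum_n$: for real $s>1$ every $\Psi(\a,m,s+n,z)$ and every coefficient is nonnegative, so Tonelli applies; for general $s$ with $\Re(s)>1$ one uses $\abs{\Psi(\a,m,s+n,z)}\le\Psi(\a,m,\Re(s)+n,z)$ (as in the proof of Theorem~\ref{Psi-integral}) together with the asymptotics $\Gamma(s+n)^2/(\Gamma(2s+n)\,n!)=O(1/n)$ and $\int_{\Xa}\Psi(\a,m,\Re(s)+n,z)\,\omega^2=O(1/n)$, which gives an absolutely convergent majorant and hence Fubini. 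Applying Theorem~\ref{Psi-integral} with parameter $s+n$ (valid since $\Re(s+n)>1$) then yields
\[
\int_{\Xa}\Phi(\a,m,s,z)\,\omega^2 = 2\,\vol(T(\a,m))\sum_{n=0}^\infty \frac{\Gamma(s+n)^2}{\Gamma(2s+n)\,n!\,(s+n-1)}.
\]

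So the whole statement reduces to the closed-form identity
\[
\sum_{n=0}^\infty \frac{\Gamma(s+n)^2}{\Gamma(2s+n)\,n!\,(s+n-1)} = \frac{1}{s(s-1)}\qquad(\Re(s)>1),
\]
which I regard as the only nontrivial point. I would prove it by writing $\frac{1}{s+n-1}=\int_0^1 t^{s+n-2}\,dt$ (legitimate since $\Re(s)>1$), interchanging the absolutely convergent sum with the integral, and recognizing $\sum_{n} \frac{\Gamma(s+n)^2}{\Gamma(2s+n)\,n!}\,t^n = \frac{\Gamma(s)^2}{\Gamma(2s)}\,\hF(s,s;2s;t)$ from~\eqref{hyper-power}. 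Thus the sum equals $\frac{\Gamma(s)^2}{\Gamma(2s)}\int_0^1 t^{s-2}\,\hF(s,s;2s;t)\,dt$. Inserting Euler's integral representation $\hF(s,s;2s;t)=\frac{\Gamma(2s)}{\Gamma(s)^2}\int_0^1 u^{s-1}(1-u)^{s-1}(1-tu)^{-s}\,du$ and swapping the two integrations, the inner $t$-integral is $\int_0^1 t^{s-2}(1-tu)^{-s}\,dt=\frac{(1-u)^{1-s}}{s-1}$ — this is exactly where $\Re(s)>1$ is needed for convergence at $t=0$, and it follows from $\int_0^1 t^{b-1}(1-tu)^{-a}\,dt=\frac1b\,\hF(a,b;b+1;u)$ with $a=s$, $b=s-1$ and the reduction $\hF(s,s-1;s;u)=(1-u)^{1-s}$. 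What remains is $\frac{1}{s-1}\int_0^1 u^{s-1}(1-u)^{s-1}(1-u)^{1-s}\,du=\frac{1}{s-1}\int_0^1 u^{s-1}\,du=\frac{1}{s(s-1)}$, as claimed. Feeding this back gives $\int_{\Xa}\Phi(\a,m,s,z)\,\omega^2=\frac{2\,\vol(T(\a,m))}{s(s-1)}$.

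An essentially equivalent alternative — which I would at least mention — is to imitate the unfolding proof of Theorem~\ref{Psi-integral} verbatim, replacing $(1+g(A,z))^{-s}$ by $Q_{s-1}(1+2g(A,z))=Q_{s-1}\bigl(1+\tfrac12 d(z_1,ASz_2)\bigr)$ via Remark~\ref{gA-with-d-remark}. Steps (i)--(viii) go through unchanged and reduce everything to the single integral $\int_{\H} Q_{s-1}\bigl(1+\tfrac{\abs{z-i}^2}{2y}\bigr)\,\tfrac{dx\,dy}{y^2}$; expanding $Q_{s-1}(x)=\tfrac12\sum_n \tfrac{\Gamma(s+n)^2}{\Gamma(2s+n)\,n!}\bigl(\tfrac{2}{1+x}\bigr)^{n+s}$ (equation~\eqref{Qs-as-hyper}) and applying Lemma~\ref{Psi-core-integral} term by term turns this into $2\pi\sum_n \tfrac{\Gamma(s+n)^2}{\Gamma(2s+n)\,n!\,(s+n-1)}=\tfrac{2\pi}{s(s-1)}$, so each inner $z_1$-integral contributes $\tfrac{1}{2s(s-1)}$ and the outer sum over $A\in\Ga\bs L(\a)^\vee/\setpm$ produces $\tfrac{2}{s(s-1)}\vol(T(\a,m))$ by~\eqref{Tm-volume-sum}. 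Either route isolates the same hypergeometric computation as the genuine obstacle; the interchange-of-limits bookkeeping is routine given the convergence estimates already available. I would present the first route as the main argument, since it reuses Theorem~\ref{Psi-integral} directly.
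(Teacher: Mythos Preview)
Your proof is correct and follows essentially the same route as the paper: decompose $\Phi(\a,m,s,z)$ into the $\Phi_n$'s, integrate term by term using Theorem~\ref{Psi-integral}, and reduce to the identity $\sum_{n\ge 0}\frac{\Gamma(s+n)^2}{\Gamma(2s+n)\,n!\,(s+n-1)}=\frac{1}{s(s-1)}$. The only difference is how that identity is dispatched: the paper absorbs the factor $1/(s+n-1)$ via $\Gamma(s+n)/(s+n-1)=\Gamma(s+n-1)$, recognizes the resulting series as $\tfrac{\Gamma(s)\Gamma(s-1)}{\Gamma(2s)}\,\hF(s-1,s;2s;1)$, and then applies Gauss's summation formula, whereas you take the slightly longer route through $\frac{1}{s+n-1}=\int_0^1 t^{s+n-2}\,dt$ and Euler's integral representation. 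Both arguments are valid; the paper's is a bit shorter.
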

\begin{proof}
To compute the integral we use the decomposition
\[
\Phi(\a,m,s,z) = \sum_{n=0}^\infty \frac{\Gamma(s+n)^2}{\Gamma(2s+n)} \frac{\Psi(\a,m,s+n,z)}{2n!}
\]
and Theorem~\ref{Psi-integral}.
We get
\begin{align*}
\int_{\Xa} \Phi(\a,m,s,z)
&= \sum_{n=0}^\infty \frac{\Gamma(s+n)^2}{\Gamma(2s+n)} \frac{\frac{4}{s+n-1} \vol(T(\a,m))}{2n!}\\
&= 2 \vol(T(\a,m)) \sum_{n=0}^\infty \frac{\Gamma(s+n)^2}{\Gamma(2s+n)} \frac{1}{n!}  \frac{1}{s+n-1}.
\end{align*}
Using the functional equation of the gamma function, the power series expansion of the hypergeometric function~\eqref{hyper-power} and \cite[15.4.2]{handbook} one shows the identity
\[
\sum_{n=0}^\infty \frac{\Gamma(s+n)^2}{\Gamma(2s+n)} \frac{1}{n!}  \frac{1}{s+n-1} =  \frac{1}{s(s-1)}
\]
which finishes the proof.
\end{proof}

\begin{proposition} \label{integral-Phi-0} 
The function $\Phi_0(\a,m,z)$ is integrable and we have
\[
\int_{X(\a)} \abs{\Phi_0(\a,m,z)} \omega^2 = O(m^2 \log(m))
\]
for large $m$.
\end{proposition}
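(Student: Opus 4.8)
The plan is to reduce to the cusp $\infty$, to exploit that $\Phi_0(\a,m,z)$ is a \emph{smooth} $\Ga$-invariant function on all of $\H^2$ — the pole of $\Phi_0(\a,m,s,z)$ at $s=1$ lies in the variable $s$ and has residue $q(\a,m)$ independent of $z$, while the logarithmic singularity of $\sum_{n\ge1}\Phi_n(\a,m,1,z)$ along $T(\a,m)$ agrees with that of $\Phi(\a,m,z)$ and hence cancels in $\Phi_0=\Phi-\sum_{n\ge1}\Phi_n(\cdot,1,\cdot)$ — so that integrability is purely a question of growth at the cusps. Accordingly I would split $\Xa$ into the deep cuspidal part $U_m:=\{z\in\H^2:y_1y_2>m/(DN(\a))\}/\Ga$ (near every cusp, identified with $\infty$ via Proposition~\ref{first-Phi-prop} and the isomorphisms $\overline{X(\a\b^2)}\isoArrow\XaH$) and its complement.

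On $U_m$ I would write down the Fourier expansion of $\Phi_0(\a,m,z)$ at $\infty$ by the route of Theorem~\ref{first-Phi-fc}: decompose $\Psi(\a,m,s,z)=\Psi^0+2\sum_{b\ge1}\Psi^b$, insert the Fourier coefficients of Subsection~\ref{fc-decomp} (Corollary~\ref{Psi-0-fc} for $\Psi^0$, Lemma~\ref{fc-b-tilde-0} for the constant and Lemma~\ref{fc-b-tilde-upper-bound} for the non-constant coefficients of $\Psi^b$), apply the meromorphic continuation of $\sum_b G^b(\a,m,0)b^{-2s}$ from Proposition~\ref{Gseries-with-div-sum}, and take the constant term at $s=1$. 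This converges on $U_m$, where $y_1y_2$ exceeds $m/(DN(\a)b^2)$ for every $b\ge1$, and has the shape
\[
\Phi_0(\a,m,z)=L_0(\a,m)-q(\a,m)\log(16\pi^2y_1y_2)+\tfrac{4\pi mN(\a)}{D}\,y_1y_2\!\!\sum_{\lambda\in\Lambda^+(\a,m)}\!\!\frac{1}{\lambda y_1-\lambda'y_2}+\mathcal R(\a,m,z),
\]
with $L_0(\a,m)=O(m^2\log m)$ by the computation behind Corollary~\ref{qL-growth}, the $\lambda$-sum convergent and of size $O(m^{\ep}/\sqrt{y_1y_2})$ (use $\lambda\lambda'=-mN(\a)/D$ and $\lambda y_1-\lambda'y_2\ge2\sqrt{mN(\a)y_1y_2/D}$, so each of the $O(m^{\ep})$ essential $(\OK^\times)^2$-orbits contributes a geometric series), and $\mathcal R(\a,m,z)$ collecting terms decaying exponentially in $y$ with coefficients polynomial in $m$ (from $I_1(x),J_1(x)=O(x)$, a trivial polynomial bound on $G^b(\a,m,\nu)$, and Lemma~\ref{fc-b-tilde-upper-bound}); on $U_m$ these are in fact of size $O(e^{-c\sqrt m})$. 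Since $\vol_{\omega^2}(U_m)=O(1/m)$, $\int_{U_m}|\log(y_1y_2)|\,\omega^2=O((\log m)/m)$ and $\int_{U_m}\sqrt{y_1y_2}\,\omega^2=O(m^{-1/2})$, this gives integrability near the cusps and $\int_{U_m}|\Phi_0(\a,m,z)|\,\omega^2=O(m\log m)$.

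On $\Xa\setminus U_m$ I would bound $|\Phi_0(\a,m,z)|$ pointwise by $O(m^2\log m)$ and conclude from $\vol_{\omega^2}(\Xa\setminus U_m)\le\vol_{\omega^2}(\Xa)=\zeta_K(-1)$. Fix $z$ with $y_1y_2\le m/(DN(\a))$, put $b_0:=\ceil{\sqrt{m/(DN(\a)y_1y_2)}}=O(\sqrt m)$, and split $\Psi=\Psi^0+2\sum_{b\le b_0}\Psi^b+2\sum_{b>b_0}\Psi^b$. For $b>b_0$ one has $y_1y_2>m/(DN(\a)b^2)$, so the Fourier machinery applies to $\sum_{b>b_0}\Psi^b$; subtracting the finite head (which at $s=1$ is $O(\mathrm{poly}(m))$) from the full $\nu=0$ series leaves $O(m^2\log m)(1+|\log(y_1y_2)|)=O(m^2\log m)$ here, the remaining Fourier terms being $O(\mathrm{poly}(m))$ since $y$ is bounded below on this range. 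Each $\Psi^b(\a,m,1,z)$ with $b\le b_0$ is holomorphic at $s=1$ and non-negative, so only an upper bound is needed: writing $\Psi^b(\a,m,1,z)=\sum_A(1+g(A,z))^{-1}$ over the $A=\sMatrix{a}{\lambda'}{\lambda}{b}\in L(\a)^\vee$ with $\det(A)=m/(N(\a)D)$, and using $g(A,z)=h(A,z)DN(\a)/m$ together with the positive definite majorant $q_z(A)=\det(A)+2h(A,z)$ on $V_\R$, a lattice-point count for $q_z$ bounds both $\#\{A:g(A,z)\le1\}$ and the tail $\sum_{g(A,z)>1}g(A,z)^{-1}$ polynomially in $m$, uniformly in $z$; summing the $O(\sqrt m)$ values $\Psi^b$ stays polynomial. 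Finally $\tfrac12\Psi^0(\a,m,1,z)=O(m^{1+\ep}\sqrt{y_1y_2})=O(m^{3/2+\ep})$ there. Adding up gives $|\Phi_0(\a,m,z)|=O(m^2\log m)$ on $\Xa\setminus U_m$, hence $\int_{\Xa\setminus U_m}|\Phi_0|\,\omega^2=O(m^2\log m)$; combined with the cuspidal estimate this proves the claim.

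The main obstacle is precisely this pointwise bound on $\Xa\setminus U_m$: one must make the lattice-point estimates for the majorant $q_z$ — controlling the finitely many $\Psi^b(\a,m,1,z)$ with $b\le b_0$ and $\Psi^0(\a,m,1,z)$ — polynomial in $m$ and uniform in $z$ over the whole of $\Xa\setminus U_m$, keeping in mind that this region grows with $m$ and reaches depth $y_1y_2\sim m$ into the cusps, and one must match it with the Fourier-expansion estimate valid on $U_m$ across the hypersurface $y_1y_2=m/(DN(\a))$. The reduction to $\infty$, the exact shape of the Fourier expansion, the volume computations for $U_m$, and the growth of $L_0(\a,m)$ and $q(\a,m)$ are routine given the results already in place.
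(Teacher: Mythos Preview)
Your strategy is in the same spirit as the paper's sketch — control $|\Phi_0|$ through the Fourier expansion of Subsection~\ref{fc-decomp} and treat the constant coefficient, where the regularization lives, with extra care — but you have built in a complication that the paper avoids and that is the source of the very ``main obstacle'' you flag at the end.

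The point is that the Fourier expansions computed in Subsection~\ref{fc-decomp} carry \emph{no} restriction on $y_1y_2$. Lemma~\ref{fc-b0-tilde} (and its Corollary~\ref{Psi-0-fc}) holds for all $z\in\H^2$; Lemma~\ref{fc-b-tilde-0} and Lemma~\ref{fc-b-tilde-upper-bound} hold for all $y$. This is exactly the gain from passing from $Q_{s-1}(1+2x)$ to $(1+x)^{-s}$: the latter is smooth at $x=0$, so the Poisson summation behind $\tilde H_s^B$ is valid everywhere, not only above the height $B=m/(DN(\a)b^2)$. Consequently the Fourier expansion of $\Psi(\a,m,s,z)$ — and after taking $\mathcal C_{s=1}$, of $\Phi_0(\a,m,z)$ — is valid on all of $\H^2$, and one can integrate each Fourier term in absolute value over the whole of $X(\a)$. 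Your decomposition into $U_m$ and $X(\a)\setminus U_m$, together with the further splitting $b\le b_0$ versus $b>b_0$ and the lattice-point counts for $q_z$ uniform in $z$, is therefore unnecessary; the paper's route simply bounds $\int_{X(\a)}|a_\nu(y)|\,\omega^2$ for each $\nu$ using the globally valid estimates of Subsection~\ref{fc-decomp} and sums. The only genuinely delicate piece is the $\nu=0$ coefficient, where one must feed the explicit ${}_2F_1$-formula of Lemma~\ref{fc-b-tilde-0} and the meromorphic continuation from Proposition~\ref{Gseries-with-div-sum} into the constant term at $s=1$ before estimating the integral — this is the ``more subtle'' work the paper alludes to, and it is the analogue of what you already do on $U_m$, now carried out over all of $X(\a)$.

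Two small corrections to your $U_m$ analysis, harmless for the final bound but worth noting: the term $\tfrac{4\pi mN(\a)}{D}\,y_1y_2\sum_{\lambda}(\lambda y_1-\lambda'y_2)^{-1}$ is of order $m^{1/2+\ep}\sqrt{y_1y_2}$ (not $m^{\ep}/\sqrt{y_1y_2}$), since $\lambda y_1-\lambda'y_2\ge 2\sqrt{mN(\a)y_1y_2/D}$ and the prefactor contains $y_1y_2$; and this is in fact the dominant growth of $\Phi_0$ at the cusp, not the $\log(y_1y_2)$ term. Against $\omega^2$ it is still integrable with a polynomial contribution in $m$, so your conclusion there stands.
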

\begin{proof}
The proof of this proposition relies on the results of Subsection~\ref{fc-decomp}. Because of its length and many tedious estimates it is skipped here. All the details can be found in \cite{buckdiss}. The rough idea is to replace each term in the Fourier series by its absolute value and estimate the integral over the resulting series. However, when dealing with the constant Fourier coefficient where the regularization takes place one has to work more subtle.
\end{proof}

\begin{corollary} \label{Phi-integrable}
The function $\Phi(\a,m,z)$ is integrable.
\end{corollary}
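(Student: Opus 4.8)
The plan is to exploit the smooth decomposition $\Phi(\a,m,z) = \Phi_0(\a,m,z) + \sum_{n=1}^\infty \Phi_n(\a,m,1,z)$ established in Subsection~\ref{fc-decomp}, together with Proposition~\ref{integral-Phi-0}, and to reduce the whole question to controlling the tail $\sum_{n\ge 1}\Phi_n(\a,m,1,z)$, whose terms turn out to be nonnegative. Indeed, substituting $s=1$ into the definition of $\Phi_n(\a,m,s,z)$ and simplifying the gamma factors via $\Gamma(n+1)=n!$ and $\Gamma(n+2)=(n+1)!$ gives
\[
\Phi_n(\a,m,1,z) = \frac{\Psi(\a,m,n+1,z)}{2(n+1)},
\]
and since $\Psi(\a,m,n+1,z) = \sum_{A}(1+g(A,z))^{-(n+1)}$ with each $g(A,z)\ge 0$ (the vectors $A\in L(\a)^\vee$ occurring here have positive determinant $m/(N(\a)D)$, and $h(A,z)\ge 0$), every summand $(1+g(A,z))^{-(n+1)}$ is positive. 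Hence each $\Phi_n(\a,m,1,z)$ is a nonnegative measurable function on $\H^2$ for $n\ge 1$.

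First I would integrate this tail term by term. Being a series of nonnegative measurable functions, Tonelli's theorem permits interchanging the sum over $n$ with integration over $\Xa$, so that
\[
\int_{\Xa}\sum_{n=1}^\infty \Phi_n(\a,m,1,z)\,\omega^2 = \sum_{n=1}^\infty \frac{1}{2(n+1)}\int_{\Xa}\Psi(\a,m,n+1,z)\,\omega^2 .
\]
Applying Theorem~\ref{Psi-integral} with $s=n+1>1$, the inner integral equals $\tfrac{4}{n}\vol(T(\a,m))$, whence the right-hand side becomes
\[
2\vol(T(\a,m))\sum_{n=1}^\infty\frac{1}{n(n+1)} = 2\vol(T(\a,m)) < \infty ,
\]
using the telescoping identity $\sum_{n\ge 1}\frac{1}{n(n+1)}=1$. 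Therefore $\sum_{n\ge 1}\Phi_n(\a,m,1,z)$ is integrable over $\Xa$.

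It then remains to combine this with Proposition~\ref{integral-Phi-0}, which already gives the integrability of $\Phi_0(\a,m,z)$. Since $\Phi(\a,m,z)$ agrees, away from the measure-zero set $T(\a,m)$, with the sum of these two integrable functions, it is itself integrable; here one only needs to recall that $\Phi(\a,m,z)$ is measurable, being real analytic on $\H^2\setminus T(\a,m)$ with merely logarithmic singularities along $T(\a,m)$. There is no substantial obstacle in this corollary: all the delicate work is concentrated in Proposition~\ref{integral-Phi-0}, and the only point that needs a moment's care is the legitimacy of the term-by-term integration of the tail, which is immediate from nonnegativity via Tonelli. As a byproduct this computation also yields $\int_{\Xa}\sum_{n\ge 1}\Phi_n(\a,m,1,z)\,\omega^2 = 2\vol(T(\a,m))$, which will be useful when evaluating $\int_{\Xa}\Phi(\a,m,z)\,\omega^2$ in Theorem~\ref{Phi-integral}.
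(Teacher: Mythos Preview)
Your proof is correct and follows essentially the same approach as the paper: decompose $\Phi(\a,m,z)$ via $\Phi_0(\a,m,z)+\sum_{n\ge1}\Phi_n(\a,m,1,z)$, invoke Proposition~\ref{integral-Phi-0} for the first summand, and handle the tail by integrating term by term via Theorem~\ref{Psi-integral}. The only difference is cosmetic: the paper merely gestures at the tail computation by saying one ``argues similar as in the proof of Theorem~\ref{Phi-s-integral}'', whereas you carry it out explicitly at $s=1$, simplify the gamma factors to $\Phi_n(\a,m,1,z)=\Psi(\a,m,n+1,z)/(2(n+1))$, and evaluate the resulting telescoping sum $\sum_{n\ge1}1/(n(n+1))=1$; your version is in fact clearer, and the byproduct $\int_{\Xa}\sum_{n\ge1}\Phi_n(\a,m,1,z)\,\omega^2=2\vol(T(\a,m))$ is exactly the second identity of Corollary~\ref{Phi-0-integral}.
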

\begin{proof}
We make use of the decomposition
\begin{align} \label{Phi-decomp} 
\Phi(\a,m,z) = \Phi_0(\a,m,z) + \sum_{n=1}^\infty \Phi_n(\a,m,1,z).
\end{align}
The integrability of $\Phi_0(\a,m,z)$ is the statement of Proposition~\ref{integral-Phi-0}.
For the remaining series one argues similar as in the proof of Theorem~\ref{Phi-s-integral} and obtains integrability holomorphicity of the integral for $\Re(s)>0$.
\end{proof}

\begin{theorem} \label{Phi-integral} 
We have
\[
\int_{\Xa} \Phi(\a,m,z) \omega^2 = -2 \vol(T(\a,m)) = -q(\a,m)\zeta_K(-1),
\text{ thus }
q(\a,m) = \frac{2\vol(T(\a,m))}{\zeta_K(-1)}.
\]
In particular, for odd $D$ we obtain
\[
\vol(T(\a,m)) = \frac{\sigma(\a,m,-1)}{24}.
\]
\end{theorem}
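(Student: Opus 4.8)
I would prove this by reading off the Laurent data at $s=1$ from the explicit formula already established in Theorem~\ref{Phi-s-integral}. For $\Re(s)>1$ we have $\int_{\Xa}\Phi(\a,m,s,z)\,\omega^2 = \tfrac{2\vol(T(\a,m))}{s(s-1)}$, whose Laurent expansion at $s=1$ is $\tfrac{2\vol(T(\a,m))}{s-1} - 2\vol(T(\a,m)) + O(s-1)$. Granting that one may interchange $\int_{\Xa}$ with passage to the Laurent expansion at $s=1$, and using that $\Phi(\a,m,s,z)$ has a simple pole at $s=1$ with residue the constant $q(\a,m)$ (Theorem~\ref{Phi-cont-theorem}), the residue of the left-hand side is $\int_{\Xa}q(\a,m)\,\omega^2 = q(\a,m)\vol(\Xa) = q(\a,m)\zeta_K(-1)$ and its constant term is $\int_{\Xa}\Phi(\a,m,z)\,\omega^2$ (this is the definition of $\Phi(\a,m,z)$ in Definition~\ref{regularized-Green-def}). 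Comparing with the right-hand side immediately gives both $q(\a,m)\zeta_K(-1) = 2\vol(T(\a,m))$ and $\int_{\Xa}\Phi(\a,m,z)\,\omega^2 = -2\vol(T(\a,m)) = -q(\a,m)\zeta_K(-1)$, which is the claim. (A more geometric alternative is to integrate the Green equation $dd^c[\Phi]+\delta_{Z(\a,m)}=[dd^c\Phi]$ against $\omega$ over $\XaH$: the first term integrates to $0$ by Stokes for pre-log-log forms and $dd^c\omega=0$, the second to $\vol(T(\a,m))$ since the exceptional components do not meet $\omega$, and the last to a multiple of $q(\a,m)\zeta_K(-1)$ by Proposition~\ref{Phi-real-analytic}; but pinning the normalizing constant makes this less clean than the route above.)

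The actual work is the interchange, and I would split it along the smooth decomposition of Section~\ref{decomp-section}. For the tail, $\Phi_n(\a,m,1,z) = \tfrac{1}{2(n+1)}\Psi(\a,m,n+1,z)$ is a nonnegative smooth function for $n\ge 1$, so Tonelli together with Theorem~\ref{Psi-integral} gives $\int_{\Xa}\sum_{n\ge 1}\Phi_n(\a,m,1,z)\,\omega^2 = \sum_{n\ge 1}\tfrac{1}{2(n+1)}\cdot\tfrac{4\vol(T(\a,m))}{n} = \sum_{n\ge 1}\tfrac{2\vol(T(\a,m))}{n(n+1)} = 2\vol(T(\a,m))$ by telescoping. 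For the head, Theorem~\ref{Psi-integral} yields $\int_{\Xa}\Phi_0(\a,m,s,z)\,\omega^2 = \tfrac{2\Gamma(s)^2}{\Gamma(2s)}\cdot\tfrac{\vol(T(\a,m))}{s-1}$, and with $\Gamma(1)^2/\Gamma(2)=1$ and $\tfrac{d}{ds}\log\!\big(\Gamma(s)^2/\Gamma(2s)\big)\big|_{s=1} = 2\psi(1)-2\psi(2) = -2$ its Laurent expansion at $s=1$ has residue $2\vol(T(\a,m))$ and constant term $-4\vol(T(\a,m))$. Summing the two pieces gives $\int_{\Xa}\Phi(\a,m,z)\,\omega^2 = -4\vol(T(\a,m)) + 2\vol(T(\a,m)) = -2\vol(T(\a,m))$, while matching the residue $2\vol(T(\a,m))$ of $\int_{\Xa}\Phi_0(\a,m,s,z)\,\omega^2$ with $\int_{\Xa}q(\a,m)\,\omega^2 = q(\a,m)\zeta_K(-1)$ (the residue of $\Phi_0(\a,m,s,z)$ at $s=1$ being the constant $q(\a,m)$) gives $q(\a,m) = \tfrac{2\vol(T(\a,m))}{\zeta_K(-1)}$.

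The main obstacle is the remaining dominated-convergence step: one must show that $s\mapsto\int_{\Xa}\big(\Phi_0(\a,m,s,z)-\tfrac{q(\a,m)}{s-1}\big)\omega^2$ is continuous at $s=1$ with limit equal to the integral of the pointwise limit $\Phi_0(\a,m,z)$, i.e.\ one needs an $\omega^2$-integrable majorant for $\Phi_0(\a,m,s,z)-\tfrac{q(\a,m)}{s-1}$ uniform for $s$ near $1$. Away from $T(\a,m)$ and the cusps this is automatic (the function is smooth and continuous in $s$ on the compact $\XaH$); along $T(\a,m)$ the singularity is, uniformly for $s$ near $1$, of the integrable logarithmic type coming from $Q_{s-1}(1+2g(A,z))$; and near the cusps one feeds in the explicit Fourier expansion of $\Phi_0(\a,m,s,z)$ from Section~\ref{decomp-section} — the very estimates underlying Proposition~\ref{integral-Phi-0} — to produce a $\log$-$\log$-type majorant. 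This is precisely why the smooth decomposition and its Fourier-coefficient bounds are developed before this theorem.

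Finally, the displayed special case follows by substitution: for odd $D$, equation~\eqref{q-sigma} gives $q(\a,m) = -\sigma(\a,m,-1)/L(-1,\chi_D)$, and $\zeta_K(-1) = \zeta(-1)L(-1,\chi_D) = -\tfrac{1}{12}L(-1,\chi_D)$, so $2\vol(T(\a,m)) = q(\a,m)\zeta_K(-1) = \tfrac{1}{12}\sigma(\a,m,-1)$, i.e.\ $\vol(T(\a,m)) = \sigma(\a,m,-1)/24$.
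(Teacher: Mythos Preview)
Your argument is correct and tracks the paper's own proof closely: both rely on the integral formula of Theorem~\ref{Phi-s-integral}, the integrability of $\Phi(\a,m,z)$ (via Proposition~\ref{integral-Phi-0} / Corollary~\ref{Phi-integrable}) to justify the exchange of limit and integral, and then a residue/constant-term comparison at $s=1$; the odd-$D$ case is handled identically. The one organizational difference is that you split explicitly into the head $\Phi_0$ and the tail $\sum_{n\ge 1}\Phi_n$ and evaluate each piece separately, whereas the paper works with the undivided $\Phi(\a,m,s,z)$ and only afterwards records the head/tail values as Corollary~\ref{Phi-0-integral}; this makes your second paragraph slightly redundant with the first (the direct Laurent reading of $\tfrac{2\vol(T(\a,m))}{s(s-1)}$ already gives the answer once the interchange is justified), but it is not wrong and has the minor benefit of isolating exactly where dominated convergence is needed, namely only for the $\Phi_0$ piece.
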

\begin{proof}
By Corollary~\ref{Phi-integrable} we know that $\Phi(\a,m,z)$ is integrable. This allows us to apply Lebesgue's dominated convergence theorem
\begin{align*}
\int_{\Xa} \Phi(\a,m,z) \omega^2
&= \lim_{s \to 1} \br{ \int_{\Xa} \Phi(\a,m,s,z) \omega^2 - \int_{\Xa} \frac{q(\a,m)}{s-1} \omega^2 }\\
&= \lim_{s \to 1} \br{ \frac{2 \vol(T(\a,m))}{s(s-1)} - \frac{q(\a,m)}{s-1}\zeta_K(-1) }.
\end{align*}
Since the integral is finite by Corollary~\ref{Phi-integrable} the only possibility is
\[
2 \vol(T(\a,m)) = q(\a,m)\zeta_K(-1)
\]
which proves the stated identity about $q(\a,m)$.
The integral identity follows then with L'Hôpital's rule.
For odd $D$ we may use equation~\eqref{q-sigma} and $\zeta(-1)=-1/12$ to obtain
\[
\vol(T(\a,m))
= q(\a,m) \frac{\zeta_K(-1)}{2}
= -\frac{\sigma(\a,m,-1)}{L(-1,\chi_D)} \frac{\zeta(-1) L(-1,\chi_D)}{2}
= \frac{\sigma(\a,m,-1)}{24}.
\]
\end{proof}

\begin{corollary} \label{Phi-0-integral} 
We have
\[
\int_{\Xa} \Phi_0(\a,m,z) \omega^2 = -4 \vol(T(\a,m))
\]
and
\[
\int_{\Xa} \sum_{n=1}^\infty \Phi_n(\a,m,1,z) \omega^2 =  2 \vol(T(\a,m)).
\]
\end{corollary}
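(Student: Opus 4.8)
The plan is to reduce both identities to Theorem~\ref{Psi-integral} and Theorem~\ref{Phi-integral} via the decomposition
\[
\Phi(\a,m,z) = \Phi_0(\a,m,z) + \sum_{n=1}^\infty \Phi_n(\a,m,1,z),
\qquad
\Phi_n(\a,m,1,z) = \frac{\Gamma(1+n)^2}{\Gamma(2+n)}\,\frac{\Psi(\a,m,1+n,z)}{2\,n!}.
\]
First I would handle the tail series. For every $n\ge 1$ the shift $1+n$ lies in $\set{s \defC \Re(s)>1}$, so Theorem~\ref{Psi-integral} applies and gives $\int_{\Xa}\Psi(\a,m,1+n,z)\,\omega^2 = \tfrac{4}{n}\vol(T(\a,m))$. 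A short manipulation of the Gamma factors ($\Gamma(1+n)^2/\Gamma(2+n) = n!/(n+1)$) yields
\[
\int_{\Xa}\Phi_n(\a,m,1,z)\,\omega^2 = \frac{\Gamma(1+n)^2}{\Gamma(2+n)}\,\frac{1}{2\,n!}\cdot\frac{4}{n}\,\vol(T(\a,m)) = \frac{2}{n(n+1)}\,\vol(T(\a,m)).
\]
Since $\Psi(\a,m,1+n,z)$ is a series of positive terms and the Gamma prefactors are positive, each $\Phi_n(\a,m,1,z)$ with $n\ge 1$ is nonnegative, so Tonelli's theorem justifies interchanging the sum with the integral. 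Summing the telescoping series $\sum_{n\ge 1}\tfrac{1}{n(n+1)} = 1$ then gives the second claimed identity,
\[
\int_{\Xa}\sum_{n=1}^\infty\Phi_n(\a,m,1,z)\,\omega^2 = 2\,\vol(T(\a,m)).
\]

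For the first identity I would simply subtract. By Corollary~\ref{Phi-integrable} the function $\Phi(\a,m,z)$ is integrable, by Proposition~\ref{integral-Phi-0} so is $\Phi_0(\a,m,z)$, and we have just shown the tail $\sum_{n\ge 1}\Phi_n(\a,m,1,z)$ is integrable with known integral; hence linearity of the integral applies to the decomposition above, and Theorem~\ref{Phi-integral} gives
\[
\int_{\Xa}\Phi_0(\a,m,z)\,\omega^2 = \int_{\Xa}\Phi(\a,m,z)\,\omega^2 - \int_{\Xa}\sum_{n=1}^\infty\Phi_n(\a,m,1,z)\,\omega^2 = -2\,\vol(T(\a,m)) - 2\,\vol(T(\a,m)) = -4\,\vol(T(\a,m)).
\]

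As a consistency check one can also obtain the first identity directly: by Theorem~\ref{Psi-integral} one has $\int_{\Xa}\Phi_0(\a,m,s,z)\,\omega^2 = \tfrac{2\Gamma(s)^2}{\Gamma(2s)(s-1)}\vol(T(\a,m))$ for $\Re(s)>1$, and since $\tfrac{d}{ds}\log\tfrac{\Gamma(s)^2}{\Gamma(2s)}\big|_{s=1} = 2\psi(1)-2\psi(2) = -2$, the Laurent expansion at $s=1$ equals $\tfrac{2}{s-1} - 4 + O(s-1)$, whose constant term recovers $-4\,\vol(T(\a,m))$. The only subtlety in this second route is justifying that $\mathcal{C}_{s=1}$ commutes with $\int_{\Xa}$, which would need a dominated-convergence argument of the type used in the proof of Theorem~\ref{Phi-integral}. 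I expect this kind of interchange of limit/summation with integration to be the only (and rather mild) obstacle; the subtraction route above avoids it entirely by invoking the already established integrability statements together with Tonelli's theorem for the manifestly positive tail.
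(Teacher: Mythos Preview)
Your proof is correct and follows exactly the approach implicit in the paper (the corollary is stated there without proof, as an immediate consequence of the surrounding results): compute the tail $\sum_{n\ge 1}\Phi_n(\a,m,1,z)$ term by term via Theorem~\ref{Psi-integral} using positivity and Tonelli, sum the telescoping series $\sum_{n\ge 1}\tfrac{2}{n(n+1)}=2$, and then obtain the $\Phi_0$ integral by subtracting from Theorem~\ref{Phi-integral}. Your additional consistency check via the Laurent expansion of $\tfrac{2\Gamma(s)^2}{\Gamma(2s)(s-1)}$ at $s=1$ is a nice independent verification not present in the paper.
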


\begin{theorem} \label{Phi-integrable-estimate} 
We have
\[
\int_{\Xa} \abs{\Phi(\a,m,z)} \omega^2 = O(m^2 \log(m))
\]
for large $m$.
\end{theorem}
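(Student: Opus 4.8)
The plan is to exploit the smooth decomposition \eqref{Phi-decomp} together with the integrability statements already established. Writing
\[
\Phi(\a,m,z) = \Phi_0(\a,m,z) + \sum_{n=1}^\infty \Phi_n(\a,m,1,z),
\]
the triangle inequality gives
\[
\int_{\Xa} \abs{\Phi(\a,m,z)} \omega^2
\le
\int_{\Xa} \abs{\Phi_0(\a,m,z)} \omega^2
+ \int_{\Xa} \abs{ \sum_{n=1}^\infty \Phi_n(\a,m,1,z)} \omega^2,
\]
and it suffices to bound each of the two terms on the right by $O(m^2\log m)$.

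The first term is exactly the content of Proposition~\ref{integral-Phi-0}, which asserts $\int_{\Xa}\abs{\Phi_0(\a,m,z)}\omega^2 = O(m^2\log m)$; I would simply quote it. This is where the genuine analytic work sits: one replaces the Fourier series of $\Phi_0(\a,m,z)$ by its termwise absolute value, integrates the resulting series, and handles the regularized constant coefficient separately, all carried out in the dissertation \cite{buckdiss}.

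For the tail the key point is a sign. Since $\Gamma(n+1)^2/\Gamma(n+2) = n!/(n+1)$, one has at $s=1$ simply $\Phi_n(\a,m,1,z) = \Psi(\a,m,n+1,z)/\bigl(2(n+1)\bigr)$, and for $n\ge1$ the defining series \eqref{Psi-def}, namely $\Psi(\a,m,n+1,z)=\sum_{A}(1+g(A,z))^{-(n+1)}$, converges (as $n+1\ge2>1$) to a positive real number. Hence $\sum_{n=1}^\infty \Phi_n(\a,m,1,z)\ge0$ wherever it is defined, that is off the measure-zero set $T(\a,m)$. Therefore the absolute value may be dropped, and by Corollary~\ref{Phi-0-integral}
\[
\int_{\Xa} \abs{ \sum_{n=1}^\infty \Phi_n(\a,m,1,z)} \omega^2
= \int_{\Xa} \sum_{n=1}^\infty \Phi_n(\a,m,1,z)\, \omega^2
= 2\vol(T(\a,m)).
\]

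Finally I would bound $\vol(T(\a,m))$: by Theorem~\ref{Phi-integral} one has $2\vol(T(\a,m)) = q(\a,m)\,\zeta_K(-1)$, and by Corollary~\ref{qL-growth} $q(\a,m)=O(m^2)$, so, $\zeta_K(-1)$ depending only on $K$, $\vol(T(\a,m)) = O(m^2)$. Combining the two bounds yields
\[
\int_{\Xa} \abs{\Phi(\a,m,z)} \omega^2 = O(m^2\log m) + O(m^2) = O(m^2\log m).
\]
The only genuinely new step beyond citing earlier results is the positivity of the tail $\sum_{n\ge1}\Phi_n(\a,m,1,z)$; the main obstacle, the polynomial-in-$m$ estimate for $\int_{\Xa}\abs{\Phi_0}\omega^2$, has already been dealt with in Proposition~\ref{integral-Phi-0}.
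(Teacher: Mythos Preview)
Your proof is correct and follows essentially the same route as the paper: split via the decomposition~\eqref{Phi-decomp}, cite Proposition~\ref{integral-Phi-0} for $\Phi_0$, use positivity of the tail together with Corollary~\ref{Phi-0-integral} to identify its integral as $2\vol(T(\a,m))$, and bound the latter through $q(\a,m)$ and Corollary~\ref{qL-growth}. One small inaccuracy: for $n\ge1$ the functions $\Phi_n(\a,m,1,z)$ are in fact smooth on all of $\H^2$ (since $\Psi(\a,m,s,z)$ is smooth everywhere for $\Re(s)>1$), so there is no need to exclude $T(\a,m)$ from the positivity statement.
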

\begin{proof}
Using decomposition~\eqref{Phi-decomp} and Proposition~\ref{integral-Phi-0} we are left with proving
\[
\int_{\Xa} \abs{ \sum_{n=1}^\infty \Phi_n(\a,m,1,z) } \omega^2 = 2 \vol(T(\a,m)) = O(m^2 \log(m)).
\]
The first equality follows with $\Phi_n(\a,m,1,z) \ge 0$ for $n \in \N$ and Corollary~\ref{Phi-0-integral}, the second one with $\vol(T(\a,m)) = q(\a,m) \zeta_K(-1)/2$ and Corollary~\ref{qL-growth}.
\end{proof}

\begin{corollary} \label{almost-everywhere-coro} 
The generating series
\[
\sum_{m=1}^\infty \Phi(\a,m,z) q^m
\und \sum_{m=1}^\infty \abs{\Phi(\a,m,z) q^m}
\]
with $q \in \C$, $|q|<1$ converge absolutely for almost all $z \in \H^2$ and are integrable over $\Xa$.
\end{corollary}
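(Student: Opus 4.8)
The plan is to deduce the corollary directly from the polynomial bound
$\int_{\Xa}\abs{\Phi(\a,m,z)}\,\omega^2 = O(m^2\log m)$ of Theorem~\ref{Phi-integrable-estimate} together with the hypothesis $|q|<1$, via a Tonelli/monotone-convergence argument. Each summand $\abs{\Phi(\a,m,z)q^m}=\abs{\Phi(\a,m,z)}\,|q|^m$ is nonnegative and $\Ga$ invariant, hence descends to a Borel-measurable function on $\Xa$, so Tonelli's theorem gives
\[
\int_{\Xa}\sum_{m=1}^\infty\abs{\Phi(\a,m,z)}\,|q|^m\,\omega^2
= \sum_{m=1}^\infty |q|^m\int_{\Xa}\abs{\Phi(\a,m,z)}\,\omega^2 .
\]
By Theorem~\ref{Phi-integrable-estimate} there is a constant $C>0$ with $\int_{\Xa}\abs{\Phi(\a,m,z)}\,\omega^2\le C m^2\log(m)$ for all $m\ge 2$ (and the $m=1$ term is finite by Corollary~\ref{Phi-integrable}), so the right-hand side is bounded by a constant multiple of $\sum_{m\ge 1}m^2\log(m)|q|^m$, which is finite since a power series with such coefficients has radius of convergence $1$. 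Hence the nonnegative measurable function $z\mapsto\sum_{m=1}^\infty\abs{\Phi(\a,m,z)}\,|q|^m$ is integrable over $\Xa$, in particular finite for almost every $z\in\Xa$.

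Finiteness almost everywhere of this function is exactly absolute convergence of $\sum_{m}\Phi(\a,m,z)q^m$ at the corresponding points, which settles absolute convergence of both series for a.e.\ $z\in\Xa$, and the integrability over $\Xa$ of $\sum_{m}\abs{\Phi(\a,m,z)q^m}$ has just been shown. To promote ``a.e.\ $z\in\Xa$'' to ``a.e.\ $z\in\H^2$'', note that the exceptional null set $N\se\Xa$ lifts under the projection $\H^2\to\Xa$ to $\bigcup_{\gamma\in\Ga}\gamma\widetilde{N}$ for any Borel lift $\widetilde{N}$ of $N$; as $\Ga$ is countable and acts by measure-class-preserving maps, this is a null set in $\H^2$, and off it each $\Phi(\a,m,z)$ is unchanged under $\Ga$, so both series still converge absolutely there. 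Integrability of $\sum_{m}\Phi(\a,m,z)q^m$ over $\Xa$ then follows from Lebesgue's dominated convergence theorem applied to the partial sums $\sum_{m=1}^{M}\Phi(\a,m,z)q^m$, which converge pointwise a.e.\ and are dominated by the integrable function $\sum_{m}\abs{\Phi(\a,m,z)}\,|q|^m$.

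I expect no serious obstacle: the only points requiring a word of care are the harmless lift of a null set between $\H^2$ and $\Xa$ (a countable union of null sets, using $\Ga$ invariance of each $\Phi(\a,m,z)$) and the elementary observation that the coefficient growth $O(m^2\log m)$ of Theorem~\ref{Phi-integrable-estimate} is comfortably annihilated by the geometric factor $|q|^m$ for every fixed $|q|<1$, placing the radius of convergence of the averaged series at $1$. As a byproduct of the dominated-convergence step one also obtains $\int_{\Xa}\sum_{m}\Phi(\a,m,z)q^m\,\omega^2=\sum_{m}q^m\int_{\Xa}\Phi(\a,m,z)\,\omega^2=-2\sum_{m}\vol(T(\a,m))q^m$ by Theorem~\ref{Phi-integral}, which is the starting point for the modularity statement in Theorem~\ref{modular-integral}.
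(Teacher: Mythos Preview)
Your proof is correct and follows essentially the same approach as the paper: apply Tonelli's theorem to interchange sum and integral, invoke the bound $\int_{\Xa}\abs{\Phi(\a,m,z)}\,\omega^2=O(m^2\log m)$ from Theorem~\ref{Phi-integrable-estimate}, and use $|q|<1$ to conclude. Your version is more detailed (measurability, the null-set lift to $\H^2$, the dominated-convergence step for the signed series), but the core argument is identical.
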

\begin{proof}
We have by Tonelli's theorem and Theorem~\ref{Phi-integrable-estimate} for appropriate constant $C_1,C_2>0$
\begin{align*}
\int_{\Xa}  \sum_{m=1}^\infty \abs{ \Phi(\a,m,z) q^m} \omega^2
&= \sum_{m=1}^\infty \br{\int_{\Xa} \abs{\Phi(\a,m,z) } \omega^2} |q|^m\\
&\le C_1 + \sum_{m \gg 1}^\infty C_2 m^2 \log(m) |q|^m < \infty.
\end{align*}
This implies all stated assertions.
\end{proof}

\begin{remark} \label{almost-nowhere-continuous} 
By assigning $\infty$ to the values where
\[
\sum_{m=1}^\infty \Phi(\a,m,z) q^m
\]
diverges we can interpret the series as well-defined function $\Xa \to \P^1(\C)$. However, this function is discontinuous at all $z \in \Xa$ where the series converges. This is because the set of singularities coming from the logarithmic singularities of the single $\Phi(\a,m,z)$ at the Hirzebruch--Zagier divisors lies dense in $\Xa$.
\end{remark}

\begin{theorem} \label{modular-integral}  
Assume that $D$ is odd. The integral of
\[
\sum_{m=1}^\infty \Phi(\a,m,z) e(\tau m)
\]
over $\Xa$ is a holomorphic modular form of weight $2$ in $\tau \in \H$ up to a constant.
\end{theorem}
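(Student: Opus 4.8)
The plan is to push the integral inside the sum, read off the $m$-th coefficient from Theorem~\ref{Phi-integral}, and then recognise the resulting $q$-series as the non-constant part of an explicit weight-$2$ Eisenstein series. For the first step, observe that $\abs{e(\tau m)} = \abs{q}^m$ with $q = e(\tau)$ and $\abs{q} < 1$, so Corollary~\ref{almost-everywhere-coro} applies and gives $\sum_{m\ge1}\int_{\Xa}\abs{\Phi(\a,m,z)}\,\abs{q}^m\,\omega^2 < \infty$; by Fubini's theorem we may therefore interchange summation and integration,
\[
\int_{\Xa}\sum_{m=1}^\infty\Phi(\a,m,z)\,e(\tau m)\,\omega^2 = \sum_{m=1}^\infty\Big(\int_{\Xa}\Phi(\a,m,z)\,\omega^2\Big)e(\tau m).
\]

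Next I would apply Theorem~\ref{Phi-integral}: for odd $D$ the inner integral equals $-2\vol(T(\a,m)) = -\tfrac{1}{12}\sigma(\a,m,-1)$, so the integral of the generating series is $-\tfrac{1}{12}\sum_{m\ge1}\sigma(\a,m,-1)\,e(\tau m)$, and the whole problem reduces to showing that this $q$-series becomes a holomorphic weight-$2$ modular form after adding a constant. To do this I would unwind Definition~\ref{div-sum-def}: write $D = \prod_{p\mid D}D(p)$ as the product of prime discriminants, so that $\chi_D = \prod_{p\mid D}\chi_{D(p)}$ by the Chinese remainder theorem, use the functional equation $\sigma(\a,m,-1) = \sigma(\a,m,1)$ to absorb the power of $\abs{m}$, and expand the product over the primes dividing $D$ as a sum over subsets $S$. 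With $D_S := \prod_{p\in S}D(p)$ this gives, for $m\ge1$,
\[
\sigma(\a,m,-1) = \sum_{S}\chi_{D/D_S}(N(\a))\sum_{d\mid m}d\,\chi_{D_S}(d)\,\chi_{D/D_S}(m/d).
\]

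For each $S$ the inner divisor sum is, up to a fixed normalisation, the $m$-th Fourier coefficient ($m\ge1$) of the weight-$2$ Eisenstein series $E_2^{\chi_{D/D_S},\chi_{D_S}}$, a genuine holomorphic element of $M_2(\Gamma_0(D),\chi_D)$: its level is $\abs{D/D_S}\cdot\abs{D_S} = D$, its nebentypus is $\chi_{D/D_S}\chi_{D_S} = \chi_D$, and the usual weight-$2$ obstruction does not occur since at least one of $\chi_{D_S},\chi_{D/D_S}$ is nontrivial (both are trivial only if $D=1$). Normalising so that the coefficients of these Eisenstein series at $m\ge1$ are exactly the above divisor sums, one concludes that $\sum_{m\ge1}\sigma(\a,m,-1)\,e(\tau m)$ plus the constant $\sum_S\chi_{D/D_S}(N(\a))\,a_0^{(S)}$, where $a_0^{(S)}$ is the constant term of $E_2^{\chi_{D/D_S},\chi_{D_S}}$, equals the modular form $\sum_S\chi_{D/D_S}(N(\a))\,E_2^{\chi_{D/D_S},\chi_{D_S}}(\tau)\in M_2(\Gamma_0(D),\chi_D)$. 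Multiplying by $-\tfrac{1}{12}$ yields the theorem; in the case $\a=\OK$ this is the classical identification going back to Hirzebruch--Zagier.

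I expect the main obstacle to be this last step — carrying out the subset expansion of the divisor sum explicitly, matching it to the standard normalisation of weight-$2$ Eisenstein series with nebentypus (this is where the precise constants, including the $a_0^{(S)}$, have to be pinned down), and verifying that the non-modular case $E_2$ is never hit (which uses $D > 1$). The first two steps are immediate given Corollary~\ref{almost-everywhere-coro} and Theorem~\ref{Phi-integral}.
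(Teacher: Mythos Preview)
Your first two steps---interchanging sum and integral via Corollary~\ref{almost-everywhere-coro} and Fubini, then reading off the $m$-th coefficient as $-\tfrac{1}{12}\sigma(\a,m,-1)$ from Theorem~\ref{Phi-integral}---are exactly what the paper does. The divergence is in the third step. The paper does not unpack $\sigma(\a,m,-1)$ at all: it simply quotes \cite[Corollary~4.1]{buckEisen}, which asserts that
\[
1 + \frac{2}{L(-1,\chi_D)}\sum_{m=1}^\infty \sigma(\a,m,-1)\,e(m\tau)
\]
is the Fourier expansion of a holomorphic Eisenstein series of weight $2$, level $D$ and nebentypus $\chi_D$, and concludes. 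Your route instead passes to $s=1$ via the functional equation, expands the product in Definition~\ref{div-sum-def} over subsets $S$ of the prime divisors of $D$, and recognises each piece $\sum_{d\mid m} d\,\chi_{D_S}(d)\,\chi_{D/D_S}(m/d)$ as the non-constant Fourier coefficients of a classical newform Eisenstein series $E_2^{\chi_{D/D_S},\chi_{D_S}}\in M_2(\Gamma_0(D),\chi_D)$. This is correct: for odd $D$ each $D_S$ is a fundamental discriminant, the characters are primitive, the parity condition $\chi_D(-1)=1$ holds since $D>0$, and the quasi-modular case $(\psi,\phi)=(1,1)$ is excluded because $D>1$. What you gain is a self-contained argument that does not rely on the companion paper \cite{buckEisen}, together with an explicit description of the resulting modular form as a linear combination of standard Eisenstein series indexed by the $2^{\omega(D)}$ decompositions $D=D_S\cdot(D/D_S)$. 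What the paper's route buys is brevity and a single, canonically normalised Eisenstein series (with constant term~$1$), at the cost of deferring the actual work to \cite{buckEisen}---where, incidentally, an argument close to your subset expansion is presumably carried out.
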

\begin{proof}
By Corollary~\ref{almost-everywhere-coro} the integral of the series is well defined and by Theorem~\ref{Phi-integral} equals to
\[
- \frac{1}{12} \sum_{m=1}^\infty \sigma(\a,m,-1) e(\tau m).
\]
In \cite[Corollary~4.1]{buckEisen} the Fourier expansion of an holomorphic Eisenstein series for $\Gamma_0(D)$ of nebentypus $\chi_D$ and weight $2$ is stated to be
\[
1 + \frac{2}{ L(-1,\chi_D)} \sum_{m=1}^\infty \sigma(\a,m,-1) e(m\tau).
\]
\end{proof}

\end{document}